\theoremstyle{plain} 
\newtheorem{theorem}{Theorem}
\newtheorem{lemma}{Lemma}[section]
\newtheorem{proposition}{Proposition}
\newtheorem*{conjecture*}{Conjecture}
\newtheorem*{theorem*}{Theorem}
\newtheorem*{question*}{Question}
\theoremstyle{plain}
\theoremstyle{remark}
\newtheorem{remark}{Remark}
\theoremstyle{definition}
\newtheorem*{assumption*}{Assumption}
\newtheorem*{notations*}{Notations}
\newtheorem*{acknowledgment*}{Acknowledgments}
\numberwithin{equation}{section}
\renewcommand{\pmod}[1]{\,(\textup{mod}\,#1)}
\newcommand\swapcommand[2]{%
\let\swaptemp#1
\let#1#2
\let#2\swaptemp
}
\let\sl\l
\renewcommand\l{%
	\leavevmode
  \ifmmode
    \left
  \else
    \sl
  \fi
}
\let\sL\L
\renewcommand\L{%
	\leavevmode
	\ifmmode
		\mathscr{L}
	\else
		\sL
	\fi
}
\newcommand\set[2]{%
	\left\{ #1 \; : \; #2 \right\}
}
\newcommand\EXP[1]{
	\mathbb{E}\left[ #1 \right]
}
\swapcommand{\SS}{\S}
\renewcommand{\S}{\mathscr{S}}
\DeclareMathOperator*{\sums}{\sideset{}{^{*}}\sum}
\newcommand{\CC}{\mathbb{C}}
\newcommand{\RR}{\mathbb{R}}
\newcommand{\QQ}{\mathbb{Q}}
\newcommand{\ZZ}{\mathbb{Z}}
\newcommand{\PP}{\mathbb{P}}
\newcommand{\e}{\varepsilon}
\newcommand{\s}{\sigma}
\newcommand{\Lam}{\Lambda}
\newcommand{\qqquad}{\qquad \qquad \qquad}
\newcommand{\qqqquad}{\qquad \qquad \qquad \qquad}
\newcommand{\ol}{\overline}
\newcommand{\Res}{\operatorname{Res}}
\newcommand{\li}{\operatorname{li}}
\newcommand{\meas}{\operatorname{meas}}
\newcommand{\ceq}{\coloneqq}
\newcommand{\eqc}{\eqqcolon}
\renewcommand{\a}{\alpha}
\renewcommand{\r}{\right}
\renewcommand{\Re}{\operatorname{Re}}
\renewcommand{\epsilon}{\varepsilon}
\renewcommand{\bar}{\overline}
\renewcommand{\tilde}{\widetilde}
\title[Simultaneous large values and dependence  of Dirichlet $L$-functions]{Simultaneous large values and dependence of\\ Dirichlet $L$-functions in the critical strip}
\author[S. Inoue and J. Li]{Sh\={o}ta Inoue and Junxian Li}
\address[S. Inoue]{Department of Liberal Arts and Basic Sciences, College of Industrial Technology, Nihon University, 2-11-1 Shin-ei, Narashino, Chiba 275-8576, Japan}
\email{inoue.shota@nihon-u.ac.jp}
\address[J. Li]{Department of Mathematics, University of California, Davis, One Shields Avenue, Davis, CA 95616}
\email{njxli@ucdavis.edu}
\keywords{simultaneous large values, dependence of Dirichlet $L$-functions}
\subjclass[2010]{Primary 11M06; Secondary 60B12}
\begin{document}

\maketitle

\begin{abstract}
	We consider the joint value distribution of Dirichlet $L$-functions in the critical strip $\frac{1}{2} < \s < 1$.
	We show that the values of distinct Dirichlet $L$-functions are \emph{dependent} in the sense that they do not behave like independently distributed random variables and they prevent each other from obtaining large values. Nevertheless, we show that distinct Dirichlet $L$-functions can achieve large values \emph{simultaneously} infinitely often.
\end{abstract}

\section{\textbf{Introduction and statements of results}}

	Many problems in number theory revolve around determining the size of $L$-functions. Let $\frac{1}{2}< \sigma<1$. Montgomery \cite{M1977} conjectured for the Riemann zeta-function $\zeta(s)$ that
	\begin{align}\label{Mconj}
		\max_{t\in [T, 2T]}\log |\zeta(\sigma+it)|\asymp \frac{(\log T)^{1-\sigma}}{(\log\log T)^{\sigma}}.
	\end{align}
	Even under the Riemann Hypothesis we only know that $\log \zeta(\sigma+it)\ll (\log t)^{2-2\sigma}/\log\log t$, which is still far from the conjecture \eqref{Mconj}. However, we do know that the lower bound in \eqref{Mconj} holds unconditionally from the work of Montgomery \cite{M1977} using Diophantine approximation, which improved the previous work of Titchmarsh \cite[Theorem 8.12]{T}. Later Aistleitner \cite{Ais} gave a different proof of the lower bound in \eqref{Mconj} using the resonance method.
	We would expect that \eqref{Mconj} holds for other $L$-functions as well, but both methods break down in establishing the lower bound in \eqref{Mconj} when the coefficients of the $L$-function are not positive. For more general $L$-functions, the best known $\Omega$-result is due to Aistleitner-Pa\'{n}kowski \cite{AP} where they showed that
	\begin{align}\label{AP}
		\max_{t\in[T, 2T]}\log | L(\sigma +it)| \gg \frac{(\log T)^{1-\sigma}}{\log \log T},
	\end{align}
	as long as $L(s)$ satisfies some suitable assumptions.
	On the other hand, one could obtain the lower bound as in \eqref{Mconj} for the Dedekind zeta-function $\zeta_K(s)$ of a number field $K$
	due to the positivity of the coefficients in the Dirichlet series (see e.g. \cite{Bala, SS}).
	The Dedekind zeta-function of a number field of degree greater than two can be factorized,
	for instance, when $K=\QQ(\eta_q)$ with $\eta_{q}$ the primitive $q$-th root of unity
	we know that $\zeta_K(s)$ can be factored as $\prod_{\chi\pmod q}L(s, \chi)$, where $\chi$ runs over all Dirichlet characters modulo $q$.
	The fact that Dedekind zeta-functions can obtain large values shows that the product of certain $L$-functions can be large,
	and thus it is natural to ask if all of these $L$-functions can achieve large values \emph{simultaneously} along vertical lines.
	The existence of simultaneous large values of general $L$-functions is an open problem mentioned in \cite{MPV}.
	We give an affirmative answer to this question in the case of Dirichlet $L$-functions.

	\begin{theorem}\label{SimulExtreme}
		Let $\frac{1}{2} < \s < 1$, $\bm \theta=(\theta_1, \dots, \theta_r)\in \mathbb{R}^r$,
		and let $\bm \chi=(\chi_1, \dots, \chi_r)$ be an $r$-tuple of distinct primitive Dirichlet characters.
		There exists some constant $c=c(\s, r) > 0$ such that for any $T \geq T_{0}(\s, \bm \chi, \bm \theta)$ sufficiently large we have
		\begin{align*}
			&\max_{t \in [T, 2T]}\min_{1 \leq j \leq r} \Re e^{-i\theta_{j}}\log{L(\s+it, \chi_j)}
			\geq c\frac{(\log{T})^{1-\s}}{\log{\log{T}}}.
		\end{align*}
	\end{theorem}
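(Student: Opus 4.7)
The plan is to extend the resonance method of Soundararajan and Aistleitner by building a single resonator $R(t)$ that simultaneously resonates with all $r$ Dirichlet $L$-functions. The two novelties are an additive superposition in the coefficients of $R$ (exploiting that $\chi_j \bar\chi_k$ is non-principal for $j \neq k$) and an elementary pointwise inequality relating $\min_j x_j$ to $r^{-1}\sum_j x_j$ modulo differences of small variance, used to upgrade an averaged lower bound to a simultaneous one.

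First, using standard zero-density estimates for Dirichlet $L$-functions, approximate $\log L(\s+it,\chi_j) = P_j(t) + E_j(t)$ with $P_j(t) = \sum_{n \leq X} \Lambda_{\chi_j}(n) n^{-\s-it}/\log n$ for a suitable power $X$ of $\log T$, and $E_j$ controllably small in the $|R|^2$-weighted sense. Next take $R(t) = \sum_{n} r(n) n^{-it}$ with $r$ multiplicative, supported on squarefree integers whose prime factors lie in a set $\mathcal P$ of admissible primes (coprime to all conductors $q_j$) in a range $[Y_0, Y_1]$, with $Y_0 = \exp((\log\log T)^2)$ and $Y_1 \leq \log T \cdot \log\log T$. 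Define
\[
r(p) = f(p) \sum_{k=1}^{r} e^{-i\theta_k} \chi_k(p), \qquad p \in \mathcal P,
\]
with $f(p)$ an Aistleitner-type amplitude chosen so that $\sum_{p \in \mathcal P} f(p)/p^\s \asymp V := (\log T)^{1-\s}/\log\log T$ while $\sum_n |r(n)|^2 \leq T^{1-\delta}$. A Parseval expansion of $|R(t)|^2$ together with the diagonal identity $\int_T^{2T}(mp/n)^{-it}dt = T \mathbf 1_{mp=n} + (\text{acceptable error})$ yields, after dividing by $\int|R|^2 dt$,
\[
\frac{\int|R|^2 P_j dt}{\int|R|^2 dt} = \sum_{p \in \mathcal P} \chi_j(p) \bar r(p) p^{-\s} + o(V) = e^{i\theta_j}\sum_{p \in \mathcal P} \frac{f(p)}{p^\s} + \sum_{k \neq j} e^{i\theta_k}\sum_{p \in \mathcal P}\frac{(\chi_j\bar\chi_k)(p) f(p)}{p^\s},
\]
where the $k = j$ diagonal contributes $\asymp e^{i\theta_j} V$ and each $k \neq j$ cross sum is $o(V)$ by Siegel--Walfisz and partial summation (using that $Y_0$ is large). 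Taking $\Re e^{-i\theta_j}$ of both sides gives $\int |R|^2 \Re e^{-i\theta_j} P_j\,dt \geq (1-o(1)) V \int|R|^2 dt$, uniformly in $j$.

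To pass from this average to a simultaneous minimum, put $x_j(t) = \Re e^{-i\theta_j} P_j(t)$ and use the pointwise inequality
\[
\min_{1 \leq j \leq r} x_j(t) \geq \frac{1}{r}\sum_{j=1}^r x_j(t) - \sum_{j < k}|x_j(t) - x_k(t)|.
\]
Setting $b_{jk}(p) = e^{-i\theta_j}\chi_j(p) - e^{-i\theta_k}\chi_k(p)$, each difference is $\Re\sum_p b_{jk}(p) p^{-\s-it}$, and a mean-value computation parallel to the first-moment one gives
\[
\int_T^{2T} |R|^2 |x_j - x_k|^2\, dt \ll \int|R|^2 dt \cdot \Bigl(\sum_{p \in \mathcal P}|b_{jk}(p)|^2 p^{-2\s} + \Bigl|\sum_p b_{jk}(p) \bar r(p) p^{-\s}\Bigr|^2\Bigr).
\]
The first prime sum is $O(1)$ because $\s > 1/2$, while the second is $o(V^2)$ since it is precisely the difference of the two $j \leftrightarrow k$ realizations of the previous first-moment computation, which both equal $e^{i\theta_j}V + o(V)$ and $e^{i\theta_k}V + o(V)$ after multiplying by $e^{-i\theta_j}$ and $e^{-i\theta_k}$ respectively. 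Hence $\int|R|^2|x_j - x_k|^2 dt = o(V^2 \int|R|^2 dt)$, and Cauchy--Schwarz makes $\int|R|^2|x_j - x_k|\, dt = o(V \int|R|^2 dt)$. Combining gives $\int|R|^2 \min_j x_j\, dt \geq (1-o(1)) V \int|R|^2 dt$, whence $\max_t \min_j x_j(t) \geq cV$; reintroducing the $E_j$ completes the proof.

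The main obstacle is the off-diagonal cancellation: controlling $\sum_{p \in \mathcal P}(\chi_j\bar\chi_k)(p) f(p) p^{-\s} = o(V)$ uniformly in $j \neq k$. Siegel--Walfisz gives exponential saving in the prime count, but this is eroded by partial summation against the amplitude $f(p) p^{-\s}$, which forces $Y_0$ to grow like $\exp((\log\log T)^2)$. The delicate balance is to choose $f$ and $\mathcal P$ so that simultaneously $\sum_{p \in \mathcal P} f(p)/p^{\s} \asymp V$, $\sum_n |r(n)|^2 \leq T^{1-\delta}$, and the cross sums are $o(V)$; this is precisely where the hypothesis $\s > 1/2$ (making $\sum_p p^{-2\s}$ convergent) and the narrow admissible range for the resonator become essential.
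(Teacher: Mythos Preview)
Your approach is sound and genuinely different from the paper's. The paper does not use the resonance method at all for Theorem~\ref{SimulExtreme}: it first establishes the much finer Theorem~\ref{Main_Thm_EV_R}, an asymptotic for the joint large-deviation probability $\Psi(T,\bm V,\bm\chi,\bm\theta)$ obtained by comparing the moment generating function of $(\Re e^{-i\theta_j}P_{\chi_j})_j$ to the random Euler-product model $\prod_p I_0(\sqrt{K_{\bm\chi,\bm\theta}(p,\bm x)}/p^{\s})$ and running a saddle-point analysis governed by the arithmetic factors $\Xi_j(\s,\bm\chi,\bm\theta;\bm\a)$; Theorem~\ref{SimulExtreme} then follows by choosing $\bm\a=(\a,1,\dots,1)$ with $\a$ large (Proposition~\ref{Pos_Xi}) to force all $\Xi_j>0$, whence $\Psi>0$ at the relevant threshold. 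Your route is more direct and more elementary for this theorem alone: a single resonator with superposed prime coefficients $r(p)=f(p)\sum_k e^{-i\theta_k}\chi_k(p)$, the pointwise bound $\min_j x_j\ge r^{-1}\sum_j x_j-\sum_{j<k}|x_j-x_k|$, and an $|R|^2$-weighted second moment for the differences. What the paper's machinery buys in exchange is the full joint distribution and hence the dependence results (Theorems~\ref{DP_DC} and~\ref{GDP_DC}), which your argument does not touch.

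One technical point to tighten: after the Parseval step the normalised first moment is $\sum_p \chi_j(p)\bar r(p)p^{-\s}/(1+|r(p)|^2)$, and the factor $|r(p)|^2=f(p)^2\bigl|\sum_k e^{-i\theta_k}\chi_k(p)\bigr|^2$ depends on $p\bmod d$, so it is not killed by Siegel--Walfisz alone. Splitting into residue classes shows the cross contributions are $O(\e^2 V)$ rather than $o(V)$ when $f(p)=\e$ is a fixed small constant; taking $\e$ small depending only on $r$ then makes all first moments $\ge V/2$ and the second-moment term $O(\e^4 V^2)$, which is what you need. With this adjustment (and the standard care bounding $\int_{\text{exceptional}}|R|^2$ for the $E_j$'s) your sketch goes through.
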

	\begin{remark}
		The lower bound agrees with the lower bound in \eqref{AP} (except the constant $c$) obtained by Aistleitner-Pa\'{n}kowski for a single general $L$-function whose coefficients are not necessarily positive.
	\end{remark}
	\begin{remark}
		For comparison, we mention the recent work of
		Mahatab-Pa\'{n}kowski-Vatwani \cite{MPV} where they used Diophantine approximation to prove existence of joint large values of a class of $L$-functions in a small neighborhood: they  showed that there exist $t_1, \dots, t_r\in [T, 2T]$ such that
		\begin{align}
			\Re e^{-i\theta_j} \log L_j(\sigma+it_j) \gg \frac{(\log T)^{1-\sigma}}{\log\log T}
		\end{align}
		with $|t_i-t_j|\leq 2(\log T)^{(1+\sigma)/2}(\log \log T)^{1/2}$
		when each of the $L$-functions $L_j$ satisfies suitable assumptions.
	\end{remark}
	To prove Theorem \ref{SimulExtreme}, we consider the joint value distribution of Dirichlet $L$-functions in the critical strip $\frac{1}{2} < \s < 1$. Let us recall the value distribution of the Riemann zeta-function.
	The study of the value distribution of $\zeta(s)$ in the critical strip dates back to the work of Bohr-Jessen \cite{BJ1930}, where a continuous limiting distribution of values of $\zeta(\s+it)$ in the case $\s > \frac{1}{2}$ was established. Precisely, for any fixed $\s > \frac{1}{2}$,
	there exists a probability measure $P_{\s}$ on $(\RR, \mathcal{B}(\RR))$ such that
	for any fixed $V \in \RR$
	\begin{align}\label{BJ}
		\Psi(T, V) \ceq \frac{1}{T}\meas\set{t \in [T, 2T]}{\log|\zeta(\s + it)| > V} \sim P_{\s}((V, +\infty)) \text{ as $T \rightarrow + \infty$.}
	\end{align}
	However, the precise behavior of $P_\sigma$ had not been determined until some 60 years later when
	Hattori-Matsumoto \cite{HM1999} first showed that  for $\frac{1}{2}< \s < 1$
	\begin{align}\label{HM}
		P_{\s}((V, +\infty))
		= \exp\l( - A(\s) V^{\frac{1}{1 - \s}} (\log{V})^{\frac{\s}{1 - \s}}(1 + o(1)) \r), \ V \rightarrow + \infty
	\end{align}
	where $A(\s)$ is defined by
	\begin{align} \label{def_G_s}
		A(\s) = \l( \frac{\s^{2\s}}{(1 - \s)^{2\s - 1} G(\s)^{\s}} \r)^{\frac{1}{1 - \s}},
		\quad  G(\s) = \int_{0}^{\infty}\frac{\log{I_{0}(u)}}{u^{1 + \frac{1}{\s}}}du.
	\end{align}
	Here $I_{0}(z) \ceq \frac{1}{2\pi} \int_{-\pi}^{\pi}\exp(z \cos \theta) d\theta = \sum_{n = 0}^{\infty}(z / 2)^{2n} / n!^2$ is the modified 0-th Bessel function.
	Later Lamzouri \cite{L2011} gave an effective bound for the $o(1)$ in \eqref{HM} uniformly
	for $ V \ll  \frac{(\log{T})^{1 - \s}}{\log{\log{T}}}$ by comparing $\zeta(s)$ with its random Euler product model.

	We would like to generalize \eqref{BJ} and \eqref{HM} for the joint value distribution of Dirichlet $L$-functions. To do this, we introduce the following notation.
	Let $\frac{1}{2} < \s < 1$. For an $r$-tuple of Dirichlet characters $\bm \chi=(\chi_1, \dots, \chi_r)$,
	$\bm{V} = (V_{1}, \dots, V_{r}) \in \RR^{r}$, and $\bm\theta \in \RR^r$ we denote
	\begin{align}
		\Psi(T, \bm V, \bm \chi, \bm{\theta})
		\ceq \frac{1}{T}\meas\set{ t \in [T, 2T]}{\Re e^{-i\theta_{j}}\log{L(\s + it, \chi_j)} > V_{j}
			\text{ for } j = 1, \dots, r}.
	\end{align}
	From \cite[Theorem 12.1]{St}, we know that there exists a probability measure $P^{\bm \chi}_\sigma$ on $(\RR^r, \mathcal B(\RR^r))$ such that
	\begin{align}
		\Psi(T, \bm V, \bm \chi, \bm{\theta})
		\sim P^{\bm{\chi}}_\sigma ((V_1, \infty) \times \cdots \times (V_{r}, \infty)), \quad T\rightarrow\infty.
	\end{align}
	The existence of the limiting distribution can be used to study the joint universality theorem (see e.g. \cite{LK}). We are interested in determining the behavior of $P^{\bm \chi}_\sigma$ when $V_j\rightarrow\infty$.
	To state the results, we define for $\bm \alpha=(\alpha_1, \dots, \alpha_r)\in (\RR_{>0})^r$
	\begin{align} \label{def_xi_chi}
		\xi(\s, \bm{\chi}, \bm{\theta}; \bm{\a})
		\ceq \frac{1}{\phi(d)} \sum_{u \in (\ZZ / d\ZZ)^{\times}}
		\bigg|\sum_{j = 1}^{r} \a_{j} e^{-i\theta_{j}} \chi_{j}(u)\bigg|^{\frac{1}{\s}}
	\end{align}
	with $\phi$ the Euler totient function and $d$ the least common multiple of the moduli of $\chi_{1}, \dots, \chi_{r}$.
	Throughout this paper, we denote the vector with all components being one by $\bm{1}\in \RR^{r}$ for some suitable positive integer $r$ which should be clear from the content and so we omit it in this notation. We say that two Dirichlet characters $\chi_{1}$ and $\chi_{2}$ are equivalent denoted by $\chi_{1} \sim \chi_{2}$ if they are induced by the same primitive character.

	We first state a result concerning the joint value distribution for two Dirichlet $L$-functions.
	\begin{theorem}\label{thm2}
		Let $\frac{1}{2} < \s < 1$ and $\bm \theta\in \RR^2$. Let $\bm{\chi} = (\chi_1,\chi_2)$ with $\chi_1\not\sim \chi_2$.
		Then, there exists a positive constant $a_1 = a_1(\s)$ such that
		for any large $T, V$ satisfying $V \leq a_1\frac{(\log{T})^{1 - \s}}{\log{\log{T}}}$, we have
		\begin{align}
			& \Psi(T,\bm V, \bm \chi, \bm \theta)
			= \exp\l(-\tilde\xi(\sigma, \bm{\chi}) A(\s) V^{\frac{1}{1-\s}} (\log{V})^{\frac{\s}{1-\s}}
			\Big(1 + O_{\s, \bm{\chi}, \bm{\theta}}\Big( \l(\frac{\log{\log{V}}}{\log{V}}\r)^{1-\s} \Big) \Big) \r)
		\end{align}
		where  $\bm{V} = (V, V)$ and
		\begin{align}
			\tilde\xi(\sigma, \bm \chi, \bm \theta)=2^{\frac{1}{1-\sigma}}\Big(\xi(\sigma, \bm \chi, \bm \theta; \bm 1)\Big)^{\frac{-\sigma}{1-\sigma}},
		\end{align}
		and $\xi(\sigma, \bm{\chi}, \bm \theta;\bm 1)$ is defined in \eqref{def_xi_chi} and $A(\sigma)$ is defined in \eqref{def_G_s}.
	\end{theorem}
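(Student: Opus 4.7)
The plan is to generalize Lamzouri's random Euler product method from a single zeta-function to two Dirichlet $L$-functions. Introduce i.i.d.\ random variables $\{X(p)\}_p$ uniform on the unit circle and set
\begin{align*}
Y_j \ceq \Re\, e^{-i\theta_j}\sum_p \frac{\chi_j(p)X(p)}{p^\s},\qquad j=1,2.
\end{align*}
First I would establish that, for $V$ in the stated range and $\bm V=(V,V)$, the $t$-distribution $\Psi(T,\bm V,\bm\chi,\bm\theta)$ agrees with $\PP(Y_1>V,Y_2>V)$ up to the claimed relative error. This is accomplished by truncating $\log L(\s+it,\chi_j)$ to a Dirichlet polynomial over primes $p\le Y$ (with $Y$ a small power of $T$; the prime-power contribution is $O(1)$ since $\s>\tfrac12$) and performing a high-order moment/Laplace-transform comparison. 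Concretely, for $\bm\alpha=(\alpha_1,\alpha_2)$ of polynomially bounded size, orthogonality of $\{p^{-it}\}$ on $[T,2T]$ and the $\QQ$-linear independence of $\{\log p\}$ yield
\begin{align*}
\frac1T\int_T^{2T}\exp\!\l(\alpha_1\Re e^{-i\theta_1}P_1(t)+\alpha_2\Re e^{-i\theta_2}P_2(t)\r)\,dt
\approx \EXP{\exp(\alpha_1 Y_{1,Y}+\alpha_2 Y_{2,Y})},
\end{align*}
where $P_j(t)=\sum_{p\le Y}\chi_j(p)p^{-\s-it}$ and, crucially, the right-hand side factorises by independence as $\prod_{p\le Y}I_0(|\alpha_1 e^{-i\theta_1}\chi_1(p)+\alpha_2 e^{-i\theta_2}\chi_2(p)|/p^\s)$.

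The probability $\PP(Y_1>V,Y_2>V)$ is then computed by a two-dimensional Chernoff/saddle-point argument. The log-Laplace transform is
\begin{align*}
K(\alpha_1,\alpha_2)=\sum_p\log I_0\!\l(\l|\alpha_1 e^{-i\theta_1}\chi_1(p)+\alpha_2 e^{-i\theta_2}\chi_2(p)\r|/p^\s\r),
\end{align*}
and by symmetry the optimum of $-\alpha_1 V-\alpha_2 V+K$ lies on $\alpha_1=\alpha_2=\alpha$. The reduction to the constant $\xi(\s,\bm\chi,\bm\theta;\bm 1)$ goes via equidistribution of primes in $(\ZZ/d\ZZ)^\times$ with $d=\lcm(d_1,d_2)$: grouping primes by residue class $u$ and invoking the single-variable asymptotic $\sum_p\log I_0(c/p^\s)\sim G(\s)c^{1/\s}/\log c$ (here $c=a_u\alpha$ with $a_u=|e^{-i\theta_1}\chi_1(u)+e^{-i\theta_2}\chi_2(u)|$), one arrives at
\begin{align*}
K(\alpha,\alpha)\sim G(\s)\,\xi(\s,\bm\chi,\bm\theta;\bm 1)\,\alpha^{1/\s}/\log\alpha.
\end{align*}
Solving the saddle equation $K'(\alpha)=2V$ gives $\alpha\asymp V^{\s/(1-\s)}(\log V)^{\s/(1-\s)}$, and substituting back into $-2\alpha V+K(\alpha,\alpha)$ produces the exponent $-\tilde\xi A(\s)V^{1/(1-\s)}(\log V)^{\s/(1-\s)}$, since the algebraic identity $2(1-\s)\l(2\s^2/((1-\s)G(\s)\xi)\r)^{\s/(1-\s)}=2^{1/(1-\s)}\xi^{-\s/(1-\s)}A(\s)$ is precisely what the definitions of $\tilde\xi$ and $A(\s)$ encode.

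The main obstacles I anticipate are: (i) pushing the Dirichlet-polynomial moment comparison to hold uniformly for $\bm\alpha$ of size up to $V^{\s/(1-\s)}(\log V)^{\s/(1-\s)}$, since this dictates the range $V\le a_1(\log T)^{1-\s}/\log\log T$ via the need to keep the truncation length $Y$ comfortably smaller than $T$; (ii) sharpening the single-prime asymptotic $\sum_p\log I_0(c/p^\s)=G(\s)c^{1/\s}/\log c\cdot(1+O((\log\log c/\log c)^{1-\s}))$ well enough to feed the claimed $(\log\log V/\log V)^{1-\s}$ error back into the final exponent, which requires a careful split at the transition range $p\sim c^{1/\s}$ where $\log I_0$ passes from quadratic to linear, plus explicit control of the Siegel--Walfisz-type error in the equidistribution modulo $d$; and (iii) confirming that the symmetric critical point is a genuine maximum of $-\alpha_1 V-\alpha_2 V+K(\alpha_1,\alpha_2)$, which should follow from convexity of $K$ as a log-Laplace transform but must be verified because the $\chi_j$ are complex, so the second-derivative analysis is not purely scalar.
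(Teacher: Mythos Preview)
Your outline matches the paper's approach almost exactly: the paper proves a general $r$-variable version (its Theorem~3) via the moment-generating-function identity you wrote down, the Bessel-product asymptotic coming from equidistribution of primes in $(\ZZ/d\ZZ)^\times$, and a saddle-point localization, and then specializes to $r=2$, $\bm\alpha=\bm 1$.

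One point does need real work, though, and it is \emph{not} symmetry. The function $K(\alpha_1,\alpha_2)$ is not symmetric under $\alpha_1\leftrightarrow\alpha_2$ (the characters $\chi_1,\chi_2$ play different roles term by term), so you cannot conclude the saddle sits on the diagonal for free. What you need is that the two partial derivatives of $K$ coincide on the diagonal, which after the equidistribution step amounts to the identity
\[
\Re\bigl(\overline{e^{-i\theta_1}\chi_1(u)}\,(e^{-i\theta_1}\chi_1(u)+e^{-i\theta_2}\chi_2(u))\bigr)
=\Re\bigl(\overline{e^{-i\theta_2}\chi_2(u)}\,(e^{-i\theta_1}\chi_1(u)+e^{-i\theta_2}\chi_2(u))\bigr)
\]
for every $u\in(\ZZ/d\ZZ)^\times$ with $\chi_1(u)\chi_2(u)\neq 0$. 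This holds because both sides equal $1+\Re\,e^{i(\theta_1-\theta_2)}\overline{\chi_1}\chi_2(u)$, using only that $|\chi_j(u)|=1$. The paper isolates exactly this computation as a lemma (its Lemma~5.1), which also gives $\Xi_1=\Xi_2=\xi(\s,\bm\chi,\bm\theta;\bm 1)/2>0$ and thereby produces the constant $\tilde\xi$ after the rescaling $V\mapsto (\xi/2)V$. Once you insert this, your concern~(iii) about the second-order analysis becomes moot: the paper never checks a Hessian, it gets matching upper and lower bounds directly from the convexity-free localization with $\delta\asymp(\log\log\|\bm x\|/\log\|\bm x\|)^{1-\s}$.

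Two smaller remarks. Your concern~(ii) overstates the difficulty: the single-variable asymptotic $\sum_{p\equiv u\,(d)}\log I_0(c/p^\s)=G(\s)c^{1/\s}/(\phi(d)\log c)\,(1+O(1/\log c))$ holds with relative error $O(1/\log c)$, not $(\log\log c/\log c)^{1-\s}$; the fractional power in the final statement arises only at the saddle-localization stage, from balancing the Taylor error in $F_{\bm\chi,\bm\theta,\s}$ against the window width $\delta$. And in~(i), the paper handles the uniformity in $\bm\alpha$ not by a high-moment comparison alone but by first restricting $t$ to a set $\mathcal A$ on which the Dirichlet polynomials are bounded by $(\log T)^{1-\s}/\log\log T$, which makes the Taylor truncation of $\exp(\cdot)$ trivially uniform for $\|\bm\alpha\|\le b(\log T)^\s$.
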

	The joint value distribution of $L$-functions on the critical line $\sigma = \frac{1}{2}$ was considered by Bombieri-Hejhal \cite{BH1995}, who showed that normalized values of $L$-functions satisfying certain assumptions behave like \emph{independent} Gaussian distributed random variables.
	One may wonder if the independence still holds when we move into the critical strip away from the critical line. From work of Voronin \cite{Vor} and Lee-Nakamura-Pa\'nkowski \cite{LNP2017}, we know that different $L$-functions under suitable assumptions satisfy joint universality properties, which indicate some independence between these $L$-functions.
	However, we show that the ``independence'' breaks down when considering their value distributions and distinct Dirichlet $L$-functions ``repel" each other in the sense that they prevent each other obtaining large values simultaneously.
	\begin{theorem}\label{DP_DC}
		Let $\frac{1}{2}< \sigma <1, \bm \theta\in \mathbb{R}^2$ and $\bm \chi=(\chi_1, \chi_2)$ with $\chi_1\not \sim \chi_2$.
		Then $\tilde{\xi}(\sigma, \bm \chi, \bm \theta) \geq 2^{1/2(1 - \s)}$.
		In particular, $\tilde{\xi}(\sigma, \bm \chi, \bm \theta) > 2$.
	\end{theorem}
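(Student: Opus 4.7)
The entire assertion boils down to a Jensen's-inequality argument after an elementary algebraic reduction. First, since $\sigma \in (1/2,1)$ the exponent $-\sigma/(1-\sigma)$ is negative, and a one-line manipulation of
\[
	\tilde\xi(\sigma,\bm\chi,\bm\theta) = 2^{1/(1-\sigma)}\,\xi(\sigma,\bm\chi,\bm\theta;\bm 1)^{-\sigma/(1-\sigma)}
\]
shows that the claim $\tilde\xi > 2$ is equivalent to the cleaner strict inequality $\xi(\sigma,\bm\chi,\bm\theta;\bm 1) < 2$. The rest of the proof is devoted to establishing this.

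The second step is a second-moment computation. Set $W(u) := |e^{-i\theta_1}\chi_1(u) + e^{-i\theta_2}\chi_2(u)|^2$. Since $|\chi_j(u)| = 1$ on $(\ZZ/d\ZZ)^\times$, expanding gives
\[
	W(u) = 2 + 2\Re\bigl(e^{i(\theta_2-\theta_1)}\chi_1(u)\bar\chi_2(u)\bigr).
\]
Averaging over $u$, the real-part term vanishes by orthogonality of Dirichlet characters \emph{provided} $\chi_1\bar\chi_2$ is a nontrivial character modulo $d = \lcm(d_1,d_2)$. This is where the hypothesis $\chi_1 \not\sim \chi_2$ enters: distinct primitive Dirichlet characters induce distinct characters modulo any common multiple of their conductors, so $\chi_1\bar\chi_2 \not\equiv 1$ on $(\ZZ/d\ZZ)^\times$. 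Hence $\frac{1}{\phi(d)}\sum_u W(u) = 2$.

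Finally, since $1/(2\sigma) \in (1/2,1)$, the function $t \mapsto t^{1/(2\sigma)}$ is concave on $[0,\infty)$. Applying Jensen's inequality to the uniform measure on $(\ZZ/d\ZZ)^\times$ and using the second-moment identity gives
\[
	\xi(\sigma,\bm\chi,\bm\theta;\bm 1)
	= \frac{1}{\phi(d)}\sum_u W(u)^{1/(2\sigma)}
	\le \Bigl(\frac{1}{\phi(d)}\sum_u W(u)\Bigr)^{1/(2\sigma)} = 2^{1/(2\sigma)} < 2,
\]
the last strict inequality being simply $1/(2\sigma) < 1$. Substituting back through the equivalence of the first paragraph yields $\tilde\xi > 2$. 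The only step that takes any thought is the nontriviality of $\chi_1\bar\chi_2$ modulo $d$; the remainder follows automatically from Jensen, $\sigma > 1/2$, and orthogonality.
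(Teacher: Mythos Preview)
Your proof is correct and follows essentially the same route as the paper's own argument: both reduce the claim to $\xi(\sigma,\bm\chi,\bm\theta;\bm 1)<2$, use orthogonality of $\chi_1\bar\chi_2$ (equivalently, that $\bar\chi_1\chi_2$ is non-principal) to compute the second moment as $2$, and then exploit the concavity of $t\mapsto t^{1/(2\sigma)}$ to obtain $\xi\le 2^{1/(2\sigma)}<2$. The only cosmetic difference is that the paper writes the concavity step as the tangent-line bound $(1+x)^{1/(2\sigma)}\le 1+\tfrac{x}{2\sigma}$ at $x=0$ (after factoring out $2^{1/(2\sigma)}$), whereas you invoke Jensen's inequality directly; both yield the identical bound $2^{1/(2\sigma)}$.
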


	Note that from work of Lamzouri, we can infer that \eqref{BJ} still holds if one replace the Riemann zeta-function by any fixed Dirichlet $L$-function.
	Thus if the values of distinct Dirichlet $L$-functions behave like independent random variables, we would have $\tilde{\xi}(\sigma, \bm \chi, \bm \theta) =2$ contrary to Theorem \ref{DP_DC}. Note that $\tilde{\xi}(\s, \bm{\chi}, \bm{\theta})$ can be arbitrarily large as $\sigma$ approaches $1$.

	The joint distribution for a generic set of $r$-tuples of Dirichlet $L$-functions is more complicated.
	We need to further introduce
	the arithmetic factors
	$\Xi_{j}(\s, \bm{\chi}, \bm{\theta}; \bm{\a})$
	for $1\leq j\leq r$, $\bm{\a} = (\a_{1}, \dots, \a_{r}) \in (\RR_{> 0})^{r}$
	\begin{align} \label{def_Xi_j}
		&\Xi_{j}(\s, \bm{\chi}, \bm{\theta}; \bm{\a})\\
		&\ceq \frac{1}{\phi(d)}\sums_{u \in (\ZZ / d \ZZ)^{\times}}
		\bigg| \sum_{\ell = 1}^{r}\a_{\ell}e^{-i\theta_{\ell}}\chi_{\ell}(u) \bigg|^{\frac{1}{\s} - 2}
		\Re\overline{e^{-i\theta_{j}}\chi_{j}(u)}\sum_{k = 1}^{r} \a_{k} e^{-i\theta_{k}} \chi_{k}(u),
	\end{align}
	where $d$ is the least common multiple of the moduli of $\chi_{1}, \dots, \chi_{r}$
	and $\displaystyle{\sideset{}{^{*}}\sum_{u \in (\ZZ / d \ZZ)^{\times}}}$ means the sum is over the irreducible residue class of $d$
	such that $\sum_{\ell = 1}^{r}\a_{\ell} e^{-i\theta_{\ell}} \chi_{\ell}(u) \not= 0$.
	Note that we have the relation
	\begin{align}	\label{Re_xX}
		\xi(\s, \bm{\chi}, \bm{\theta}; \bm{\a}) = \sum_{j = 1}^{r}\a_{j}\Xi_{j}(\s, \bm{\chi}, \bm{\theta}; \bm{\a}).
	\end{align}
	With this notation, we can state the result for the joint distribution of $r$ Dirichlet $L$-functions.

	\begin{theorem}	\label{Main_Thm_EV_R}
		Let $\frac{1}{2} < \s < 1$, $\bm{\theta} \in \RR^r$, and let $\bm \chi=(\chi_1,\dots , \chi_r)$ be an $r$-tuple of pairwise inequivalent Dirichlet characters.
		Suppose $\bm{\a} = (\a_{1}, \dots, \a_{r}) \in (\RR_{> 0})^{r}$   satisfies
		$\Xi_{j}(\s, \bm{\chi}, \bm{\theta}; \bm{\a}) > 0$ for all $1\leq j\leq r$.
		Then there exists a positive constant $a_1= a_1(\s, \bm{\a})$ such that
		for any sufficiently large $T$ and for $\bm{V} = (\Xi_{1}(\s, \bm{\chi}, \bm{\theta}; \bm{\a}) V, \dots, \Xi_{r}(\s, \bm{\chi}, \bm{\theta}; \bm{\a}) V)$
		with $V$ sufficiently large satisfying $V \leq a_1\frac{(\log{T})^{1 - \s}}{\log{\log{T}}}$, we have
		\begin{align}	\label{Thm_IND1}
			& \Psi(T, \bm V, \bm \chi, \bm \theta)
			= \exp\l(-\xi(\s, \bm{\chi}, \bm{\theta}; \bm{\a}) A(\s) V^{\frac{1}{1-\s}} (\log{V})^{\frac{\s}{1-\s}}
			\Big(1 + O_{\s, \bm{\chi}, \bm{\theta}, \bm{\a}}\Big( \Big(\frac{\log{\log{V}}}{\log{V}}\Big)^{1-\s} \Big) \Big) \r).
		\end{align}
	\end{theorem}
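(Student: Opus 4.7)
The plan is to extend the one-dimensional analysis of Hattori-Matsumoto \cite{HM1999} and Lamzouri \cite{L2011} to the joint setting, by comparing the vector $(\Re e^{-i\theta_j}\log L(\sigma+it, \chi_j))_{j=1}^r$ with its random Euler product model and performing a multidimensional saddle-point analysis of the joint Laplace transform. The first step is to show that outside an exceptional set of small measure, each $\log L(\sigma+it,\chi_j)$ agrees with the truncated prime sum $\sum_{p\leq y}\chi_j(p)p^{-\sigma-it}$ up to $O(1)$, for a suitable $y = y(T,V)$ that is a small power of $T$; this follows from standard zero-density and mean-value estimates applied uniformly to all $r$ characters.

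Next, I would compute the joint exponential moment
$$\mathcal{L}(\bm\lambda) \ceq \frac{1}{T}\int_T^{2T}\exp\!\Big(\sum_{j=1}^r \lambda_j \Re e^{-i\theta_j}\sum_{p\leq y}\frac{\chi_j(p)}{p^{\sigma+it}}\Big)\,dt$$
for positive $\bm\lambda$. Almost orthogonality of $n^{-it}$ on $[T,2T]$ for $n\leq y^{O(1)}$ factors $\mathcal{L}(\bm\lambda)$ as a product over primes, and the prime number theorem in arithmetic progressions modulo $d$ (the $\lcm$ of the conductors) converts this product to
$$\mathcal{L}(\bm\lambda) \approx \exp\!\Bigg(\sum_{p\leq y}\frac{1}{\phi(d)}\sum_{u\in(\ZZ/d\ZZ)^\times}\log I_0\!\Big(\frac{|\sum_\ell \lambda_\ell e^{-i\theta_\ell}\chi_\ell(u)|}{p^\sigma}\Big)\Bigg),$$
so that the arithmetic factor $\xi(\sigma,\bm\chi,\bm\theta;\bm\lambda)$ of \eqref{def_xi_chi} emerges naturally from the residue-class average. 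Partial summation over primes, using $\log I_0(x)\sim x$ for $x$ large and $\log I_0(x)\sim x^2/4$ for $x$ small as in Lamzouri, yields $\log\mathcal{L}(\bm\lambda) \sim G(\sigma)\,\xi(\sigma,\bm\chi,\bm\theta;\bm\lambda)$ together with explicit subleading corrections involving $\log\|\bm\lambda\|$ that are ultimately responsible for the $(\log V)^{\sigma/(1-\sigma)}$ factor in the final answer.

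To invert, Chebyshev's inequality gives $\Psi(T,\bm V,\bm\chi,\bm\theta)\leq \exp(-\sum_j\lambda_jV_j + \log\mathcal{L}(\bm\lambda))$, optimized at the saddle $V_j = \partial_{\lambda_j}\log\mathcal{L}(\bm\lambda)$. Since $\partial_{\lambda_j}\xi$ equals $\Xi_j/\sigma$ with $\Xi_j$ as in \eqref{def_Xi_j}, and $\xi$ is positively homogeneous of degree $1/\sigma$ in $\bm\lambda$ while each $\Xi_j$ is of degree $1/\sigma-1$, the ansatz $\bm\lambda = c\bm\alpha$ with $c = c(V)$ is compatible with the prescription $V_j = \Xi_j(\sigma,\bm\chi,\bm\theta;\bm\alpha)V$, and uniquely determines $c$. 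Substituting back, the relation $\sum_j \alpha_j \Xi_j = \xi$ of \eqref{Re_xX} collapses the multivariable problem to the one-dimensional optimization in $c$ already carried out by Hattori-Matsumoto and Lamzouri, and produces the claimed exponent $\xi(\sigma,\bm\chi,\bm\theta;\bm\alpha)A(\sigma)V^{1/(1-\sigma)}(\log V)^{\sigma/(1-\sigma)}$, with $A(\sigma)$ as in \eqref{def_G_s}. A matching lower bound follows from an inverse Laplace argument using a joint local limit theorem at the saddle in the tilted measure, generalizing Lamzouri's treatment.

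The main obstacle is the multidimensional saddle-point analysis together with uniformity in the direction $\bm\alpha$. When $\sum_\ell\alpha_\ell e^{-i\theta_\ell}\chi_\ell(u)=0$ for some $u\in(\ZZ/d\ZZ)^\times$, the expressions defining $\Xi_j$ are singular, which is precisely why the starred sum in \eqref{def_Xi_j} excludes such residues; quantitatively avoiding these small denominators when differentiating $\log I_0$ is what forces the positivity hypothesis on each $\Xi_j$ and produces the implicit $\bm\alpha$-dependence of the error term. A secondary technical point is carrying the joint Dirichlet polynomial approximation and the prime-orthogonality uniformly over all $r$ characters up to the full range $V\leq a_1(\log T)^{1-\sigma}/\log\log T$, which in turn constrains the parameter $y$.
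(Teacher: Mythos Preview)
Your overall strategy matches the paper's: approximate $\log L(\sigma+it,\chi_j)$ by a prime sum on a large set, compute the joint moment generating function via orthogonality and the prime number theorem in progressions, and invert by a saddle-point argument along the ray $\bm\lambda=c\bm\alpha$. Your identification $\partial_{\alpha_j}\xi=\Xi_j/\sigma$, the homogeneity observation, and the collapse via \eqref{Re_xX} to a one-variable optimization are all correct and are exactly what the paper does.

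There is, however, a genuine gap in your choice of truncation. Taking $y$ to be ``a small power of $T$'' is incompatible with the moment computation you describe: to reach Laplace parameters $\|\bm\lambda\|\asymp(\log T)^{\sigma}$ (equivalently, $V$ up to $(\log T)^{1-\sigma}/\log\log T$) the Taylor expansion of the exponential must be carried to order $k\asymp \log T/\log\log T$, and orthogonality of $n^{-it}$ over $[T,2T]$ then requires $y^{k}\leq T$, forcing $y\leq(\log T)^{O(1)}$, not $T^{\varepsilon}$. The paper takes $X=(\log T)^{L}$ with $L=10/(2\sigma-1)$; the cost is that approximating $\log L$ by such a short polynomial is nontrivial and is handled by the $2k$-th moment bound of Lemma~\ref{KLST}, which rests on a zero-density estimate. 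Your sketch does not account for this tension.

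A related inaccuracy: the leading asymptotic of the cumulant generating function is not $\log\mathcal{L}(\bm\lambda)\sim G(\sigma)\,\xi(\sigma,\bm\chi,\bm\theta;\bm\lambda)$ but rather $\log\mathcal{L}(\bm\lambda)\sim \dfrac{G(\sigma)}{\log\|\bm\lambda\|}\,\xi(\sigma,\bm\chi,\bm\theta;\bm\lambda)$ (Proposition~\ref{AFIQ}); the factor $1/\log\|\bm\lambda\|$ sits in the main term and is what produces both $A(\sigma)$ and the $(\log V)^{\sigma/(1-\sigma)}$ factor after solving the saddle equation \eqref{def_xs}.

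Finally, for the lower bound the paper does not use a local limit theorem. It writes the $r$-fold Laplace integral as $\int e^{\sum y_jv_j}\tilde\Psi_{\mathcal A}(T,\bm v,X)\,d\bm v$, splits off a core box $[V(1-\delta),V(1+\delta)]^{r}$ with $\delta$ a small power of $\log\log\|\bm x\|/\log\|\bm x\|$, bounds each of the $2r$ tail integrals by evaluating slightly perturbed transforms $\mathcal L(x_1,\dots,(1\pm\varepsilon)x_k,\dots,x_r)$, and then uses monotonicity of $\tilde\Psi$ in each coordinate to sandwich the distribution function between two nearby values. This is more elementary than a tilted Gaussian approximation and already yields the error $(\log\log V/\log V)^{1-\sigma}$.
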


	\begin{remark}  \label{RMKXi}
		Here the arithmetic factors $\Xi_j(\sigma, \bm \chi, \bm \theta;\bm \alpha)$ appear naturally in the partial derivatives of the cumulant-generating function and we need their positivity when applying the saddle point method.
		To apply Theorem \ref{Main_Thm_EV_R} unconditionally,
		we need to find $\bm{\a}$ such that $\Xi_{j}(\s, \bm{\chi}, \bm \theta; \bm{\a}) > 0$ for all $1\leq j\leq r$,
		but the choice of $\bm{\a}$ depends highly on $\bm \chi, \bm \theta$, and $\sigma$.
		When $r = 1, 2$ we can simply take $\bm{\a} = \bm 1$.
		However, the choice $\bm{\a} = \bm 1$ does not work for all choices of $\bm \chi$ in the full range $1/2< \sigma<1$.
		For example, we can find $8$ characters modulo $13$ such that $\Xi_j(\sigma, \bm \chi, \bm 1) < 0$ for some $j$ when $\sigma$ is close to $1$. This is the reason that we introduce the parameter $\bm \alpha$ and our choice of $\bm \alpha$ is in Theorem \ref{GDP_DC} below.
	\end{remark}
	\begin{remark}
		Here we assume that $\chi_i\not \sim \chi_j$ for all $i\not=j$ since we allow $\theta_j$ to be arbitrary.  We could relax the conditions on $\chi_j$ if we put more restrictions on $\bm\theta$. For instance, if
		$\chi_1, \dots, \chi_r$ is the set of all Dirichlet characters modulo $d$ and $\bm \theta=(\theta, \dots, \theta)$, then one can show that $\Xi_j(\sigma, \bm \chi, \bm \theta; \bm 1)>0$ for all $j$ and the conclusion in Theorem \ref{Main_Thm_EV_R} still holds. (See the remark after Lemma \ref{RL_X_x_2}.)
	\end{remark}

	Next we give our choice of $\bm \alpha$  which can be used to verify the conditions in Theorem \ref{Main_Thm_EV_R} as well as \emph{dependence} of value distributions of distinct Dirichlet $L$-functions.

	\begin{theorem}	\label{GDP_DC}
		Let $\frac{1}{2} < \s < 1$, $r \geq 2$, $\bm{\theta} \in \RR^{r}$,
		and let $\bm{\chi}$ be an $r$-tuple of pairwise inequivalent Dirichlet characters.
		For $\bm{\a} = (\a, 1, \dots, 1)$ with $\a \geq \a_{0}( \s,r)$ sufficiently large depending only on  $\s$ and $r$,
		we have $\Xi_{j}(\s, \bm{\chi}, \bm{\theta}; \bm{\a})>0$ for all $j$ and
		\begin{align}
			\xi(\s, \bm{\chi}, \bm{\theta}; \bm{\a})
			> \sum_{j = 1}^{r}\Xi_{j}(\s, \bm{\chi}, \bm{\theta}; \bm{\a})^{\frac{1}{1 - \s}}.
		\end{align}
	\end{theorem}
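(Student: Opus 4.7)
The plan is to analyze $\xi(\s, \bm\chi, \bm\theta; \bm\a)$ and the $\Xi_{j}(\s, \bm\chi, \bm\theta; \bm\a)$ as $\a \to \infty$ via Taylor expansion in $\a^{-1}$ combined with orthogonality of $\chi_{1}, \dots, \chi_{r}$ viewed as characters mod $d$ (which are pairwise distinct, since the underlying primitive characters are pairwise inequivalent). First I would set up convenient notation: writing $w_{k}(u) = e^{-i\theta_{k}}\chi_{k}(u)$, $T(u) = \sum_{\ell \geq 2} w_{\ell}(u)$, $\tau(u) = \overline{w_{1}(u)}\, T(u)$, and $\eta_{j}(u) = e^{i(\theta_{j} - \theta_{1})} \overline{\chi_{j}(u)}\, \chi_{1}(u)$, the inner sum factors as $S(u) = \a w_{1}(u)(1 + \a^{-1}\tau(u))$ with $|\tau| \leq r - 1$, and
\[
|S|^{1/\s - 2}\, \Re(\overline{w_{j}} S) = \a^{1/\s - 1}\, |1 + \a^{-1}\tau|^{1/\s - 2}\, \Re[\eta_{j}(1 + \a^{-1}\tau)].
\]

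Next I would binomially expand $|1 + z|^{\gamma}$ in $z$ and average over $u \in (\ZZ / d\ZZ)^{\times}$; every resulting character sum collapses via orthogonality to an elementary contribution, yielding
\[
\Xi_{1} = \a^{1/\s - 1}\bigl(1 + O(\a^{-2})\bigr), \qquad \Xi_{j} = \a^{1/\s - 2}\bigl(\b_{j} + O(\a^{-1})\bigr) \quad (j \geq 2),
\]
where $\b_{j} = 1 + \nu(1 + \Re \Delta_{j})$ with $\nu = (1 - 2\s)/(2\s) \in (-1/2, 0)$, and $\Delta_{j} = \sum e^{i(2\theta_{1} - \theta_{\ell} - \theta_{j})}$ runs over those $\ell \geq 2$ with $\chi_{\ell} \equiv \chi_{1}^{2}\overline{\chi_{j}}$ as characters mod $d$. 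The crucial arithmetic input is that distinctness of the induced characters forces at most one $\ell$ to contribute, so $|\Delta_{j}| \leq 1$, $1 + \Re \Delta_{j} \in [0, 2]$, and therefore $\b_{j} \in (0, 1]$. This settles $\Xi_{j} > 0$ for all $j$ once $\a$ is large enough.

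For the inequality $\xi > \sum_{j} \Xi_{j}^{1/(1-\s)}$, note that $(1/\s - 2)/(1 - \s) = (1 - 2\s)/[\s(1 - \s)]$ is strictly more negative than $1/\s - 2$ on $(\tfrac{1}{2}, 1)$, so $\sum_{j \geq 2} \Xi_{j}^{1/(1-\s)} = o(\a^{1/\s - 2})$ is negligible. The comparison then reduces to extracting the coefficient of $\a^{1/\s - 2}$ in $\a\Xi_{1} + \sum_{j \geq 2} \Xi_{j} - \Xi_{1}^{1/(1-\s)}$. A direct computation gives $\Xi_{1} = \a^{1/\s - 1}(1 + c_{1}\a^{-2} + \cdots)$ with $c_{1} = (1 - 2\s) M/(4\s^{2})$ and $\sum_{j \geq 2} \b_{j} = M/(2\s)$, where $M = (r - 1) + (1 - 2\s) \Re \Delta$ and $\Delta = \sum e^{i(2\theta_{1} - \theta_{\ell} - \theta_{\ell'})}$ runs over pairs $\ell, \ell' \geq 2$ with $\chi_{\ell}\chi_{\ell'} \equiv \chi_{1}^{2}$ mod $d$. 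The cancellation $\a\Xi_{1} - \Xi_{1}^{1/(1-\s)} = -\tfrac{\s c_{1}}{1 - \s}\a^{1/\s - 2} + \cdots$ combines with $\a^{1/\s - 2}\sum_{j \geq 2}\b_{j}$ to produce the clean coefficient $M/[4\s(1 - \s)]$. Distinctness again forces $|\Delta| \leq r - 1$, and $|1 - 2\s| < 1$ on $(\tfrac{1}{2}, 1)$, so $M \geq (r - 1)(1 - |1 - 2\s|) = 2(r - 1)(1 - \s) > 0$ (using $r \geq 2$). Hence $\xi - \sum_{j} \Xi_{j}^{1/(1-\s)} \geq \tfrac{r - 1}{2\s}\, \a^{1/\s - 2} + o(\a^{1/\s - 2})$ for $\a$ large.

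The hard part will be the bookkeeping of the binomial expansions: the clean final coefficient $M/[4\s(1 - \s)]$ arises from a delicate cancellation between the second-order correction in $\Xi_{1}^{1/(1-\s)}$ and the first-order contribution from $\sum_{j \geq 2} \Xi_{j}$, both of which must be tracked to matching precision. Ensuring uniformity of the error constants in $\bm\chi$ and $\bm\theta$ (so that the threshold $\a_{0}$ depends only on $\s$ and $r$) is automatic from the bounds $|\Delta_{j}| \leq 1$ and $|\Delta| \leq r - 1$, which follow purely from character distinctness.
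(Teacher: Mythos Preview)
Your proposal is correct and follows essentially the same route as the paper: both expand $|S(u)|^{1/\s - 2}$ binomially in $\a^{-1}$, average using orthogonality of the (pairwise distinct) characters mod $d$, and compare the $\a^{1/\s - 2}$ coefficients. Your quantities $M$ and $\Re\Delta$ coincide with the paper's $r-1-(2\s-1)B(\bm\chi,\bm\theta)$ and $B(\bm\chi,\bm\theta)$ respectively, your $\b_j$ matches the paper's leading coefficient for $\Xi_j$ ($j\geq 2$), and the final coefficient $M/[4\s(1-\s)]$ and the positivity argument via $|\Delta|\leq r-1$ are identical to the paper's.
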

	\begin{remark}
		If the values of Dirichlet $L$-functions behave like independently distributed random variables,
		then we would have $\xi(\s, \bm{\chi}, \bm{\theta}; \bm{\a})
			= \sum_{j = 1}^{r}\Xi_{j}(\s, \bm{\chi}, \bm{\theta}; \bm{\a})^{\frac{1}{1 - \s}}$ contrary to Theorem \ref{GDP_DC}.
	\end{remark}

	To prove Theorem \ref{Thm_IND1}, we consider the joint value distribution for the corresponding Dirichlet polynomials.
	Define $P_{\chi}(\s + it, X) = \sum_{p \leq X}\frac{\chi(p)}{p^{\s+it}}$ and
	$ \Psi(T, \bm{V}, X)=\Psi(T, \bm{V}, X; \bm{\chi}, \bm{\theta})$ by
	\begin{align}
		& \Psi(T, \bm{V}, X)
		\ceq \frac{1}{T}\meas \set{t \in [T, 2T]}{\Re e^{-i\theta_{j}}P_{\chi_{j}}(\s + it, X) > V_{j}
		\text{ for all $j=1, \dots, r$}}.
	\end{align}

	\begin{proposition}	\label{Main_Prop_JEV}
		Let $L \geq 2$,
		and let $\s, \bm \chi, \bm \theta, \bm{\a}$ be as in Theorem \ref{Main_Thm_EV_R}.
		There exists a positive constant $a_{2} = a_{2}(\s, L, \bm{\a})$ such that
		for any large numbers $T$, $V$, $X = (\log{T})^{L}$ with $V \leq a_{2}\frac{(\log{T})^{1-\s}}{\log{\log{T}}}$,
		we have
		\begin{align}
			\Psi(T, \bm V, X)
			= \exp\l(-\xi(\s, \bm{\chi}; \bm{\a}) A(\s) V^{\frac{1}{1-\s}} (\log{V})^{\frac{\s}{1-\s}}
			\l(1 + O_{\s, \bm{\chi}, \bm{\theta}, \bm{\a}}\l( \Big(\frac{\log{\log{V}}}{\log{V}}\Big)^{1-\s} \r) \r) \r)
		\end{align}
		for $\bm{V} = (\Xi_{1}(\s, \bm{\chi}, \bm{\theta}; \bm{\a}) V, \dots, \Xi_{r}(\s, \bm{\chi}, \bm{\theta}; \bm{\a}) V)$.
	\end{proposition}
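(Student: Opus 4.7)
The plan is to adapt the single-function saddle-point argument of Lamzouri~\cite{L2011} to the joint setting. The starting point is the moment generating function
\[
M_T(\bm s) \ceq \frac{1}{T}\int_T^{2T} \exp\Big(\sum_{j=1}^r s_j \, \Re\, e^{-i\theta_j} P_{\chi_j}(\s+it, X)\Big)\, dt, \qquad \bm s \in \RR^r.
\]
Expanding the exponential in its Taylor series and invoking the orthogonality $\int_T^{2T} e^{it(\log n_1 - \log n_2)}\,dt \ll \min(T, |\log(n_1/n_2)|^{-1})$, the diagonal contributions dominate and reproduce the MGF of the random Dirichlet polynomial $\sum_{p\leq X}\chi_j(p)\mathbb{X}(p)/p^{\s}$ obtained by replacing $p^{-it}$ with i.i.d.\ Steinhaus variables $\mathbb{X}(p)$ uniform on the unit circle; using $\EXP{e^{\Re(z\mathbb{X})}} = I_0(|z|)$ for $z \in \CC$, this gives
\[
M_T(\bm s) = e^{\Phi(\bm s)}(1+o(1)), \qquad \Phi(\bm s) \ceq \sum_{p\leq X} \log I_0\big(|z_p(\bm s)|/p^{\s}\big),
\]
with $z_p(\bm s) \ceq \sum_j s_j e^{-i\theta_j}\chi_j(p)$. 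The hypothesis $X = (\log T)^L$ ensures that the Taylor moments appearing here have length $\leq T^{o(1)}$, so the asymptotic is uniform in $\bm s$ in the range below.

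I would then analyse $\Phi$ along the ray $\bm s = \kappa\bm\alpha$, $\kappa > 0$. Partitioning primes into residue classes modulo $d \ceq \lcm$ of the moduli of the $\chi_j$ and applying PNT in arithmetic progressions converts $\sum_p$ into integrals of the form $\phi(d)^{-1}\sum^{*}_u \int_2^X(\cdots)dt/\log t$, where $w(u) \ceq \sum_j \a_j e^{-i\theta_j}\chi_j(u)$. Changing variable $t = (\kappa|w(u)|)^{1/\s}y$ rescales the integrand to a scale-free form, and integration by parts against the definition of $G(\s)$ in~\eqref{def_G_s} produces the identities $\int_0^\infty \log I_0(1/y^\s)\,dy = G(\s)/\s$ and $\int_0^\infty (I_0'/I_0)(1/y^\s)y^{-\s}\,dy = G(\s)/\s^2$. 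Recombining the resulting character sums yields
\[
\Phi(\kappa\bm\alpha) \sim \frac{G(\s)\,\xi}{\log\kappa}\, \kappa^{1/\s}, \qquad \frac{\partial \Phi}{\partial s_j}(\kappa\bm\alpha) \sim \frac{G(\s)\,\Xi_j}{\s\log\kappa}\, \kappa^{1/\s - 1}.
\]
The positivity hypothesis $\Xi_j > 0$ makes the $r$ saddle-point equations $\partial_{s_j}\Phi(\kappa\bm\alpha) = V_j = \Xi_j V$ mutually consistent; they collapse to the single scalar equation $\kappa^{1/\s - 1} \sim \s V\log\kappa/G(\s)$, whence $\kappa \asymp (V\log V)^{\s/(1-\s)}$.

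For the upper bound, Chernoff's inequality at $\bm s^* = \kappa\bm\alpha$ combined with $\xi = \sum_j \a_j \Xi_j$ gives
\[
-\log \Psi(T,\bm V, X) \geq \langle \bm s^*, \bm V\rangle - \Phi(\bm s^*) + o(1) \sim \xi(1-\s)\kappa V;
\]
substituting the saddle-point value of $\kappa$ and matching constants against the definition of $A(\s)$ yields the stated exponent $\xi A(\s) V^{1/(1-\s)}(\log V)^{\s/(1-\s)}$. For the matching lower bound I would tilt the measure on $[T,2T]$ by $e^{\sum_j s^*_j \Re\, e^{-i\theta_j} P_{\chi_j}(\s+it, X)}/(T\, M_T(\bm s^*))\,dt$. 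Under this tilt each $\Re\, e^{-i\theta_j}P_{\chi_j}(\s+it,X)$ has tilted mean exactly $V_j$ and tilted variance $\partial^2_{s_j}\Phi(\bm s^*)$, the latter shown by the same asymptotic method to be $O(V/\kappa) = o(V^2)$. A Paley--Zygmund / Chebyshev argument then produces a positive proportion (in the tilted measure) of $t \in [T,2T]$ satisfying all $r$ inequalities $\Re\, e^{-i\theta_j}P_{\chi_j}(\s+it, X) > V_j$ simultaneously, which translates back to the desired lower bound on $\Psi(T,\bm V, X)$.

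The hardest step will be controlling the errors in the saddle-point asymptotics uniformly for $V$ up to $a_2(\log T)^{1-\s}/\log\log T$. This requires $\kappa^{1/\s}$ to stay safely below $X = (\log T)^L$, so that both regimes of $\log I_0$ --- the linear one (small $p$, where $\log I_0(x) \sim x$) and the Gaussian one ($p$ near $X$, where $\log I_0(x) \sim x^2/4$) --- contribute correctly to $\Phi$; the bound $V \leq a_2(\log T)^{1-\s}/\log\log T$ is precisely what enforces this. A second subtlety is verifying that the off-diagonal Hessian entries $\partial^2\Phi/\partial s_j \partial s_k(\bm s^*)$ are small enough that the tilted \emph{joint} variance stays $o(V^2)$ in every coordinate direction; the positivity $\Xi_j > 0$ keeps the saddle-point on the ray $\kappa\bm\alpha$ in a regime where the integral asymptotics and their Hessian analogues behave as expected.
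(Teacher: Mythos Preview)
Your outline is essentially correct and follows the same overarching strategy as the paper: compute the joint moment generating function, identify it with the Bessel product $\prod_{p\le X} I_0(|z_p(\bm s)|/p^\s)$, evaluate this product asymptotically via PNT in progressions, and run a saddle-point argument along the ray $\bm s = \kappa\bm\alpha$. Your asymptotics for $\Phi(\kappa\bm\alpha)$ and $\partial_j\Phi(\kappa\bm\alpha)$ match the paper's Propositions~\ref{MPJEV} and~\ref{AFIQ} and the computation following~\eqref{def_xs}.

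Where you diverge is in the extraction of the two-sided bound on $\Psi$. You propose the exponential-tilting/Chebyshev route, which requires controlling the tilted covariance (the full Hessian of $\Phi$ at $\kappa\bm\alpha$). The paper instead uses the Laplace-transform identity
\[
y_1\cdots y_r\int_{\RR^r} e^{\langle \bm y,\bm v\rangle}\,\tilde\Psi_{\mathcal A}(T,\bm v,X)\,d\bm v
= \frac{1}{T}\int_{\mathcal A}\exp\Big(\sum_j x_j\Re e^{-i\theta_j}P_{\chi_j}(\s+it,X)\Big)dt,
\]
splits the $\bm v$-integral into a central box $[V(1-\delta),V(1+\delta)]^r$ and $2r$ half-space tails, kills each tail by perturbing a single coordinate $x_k\mapsto(1\pm\e)x_k$ (this is where $\Xi_k>0$ enters, cf.~\eqref{nDPJEV1}), and then sandwiches $\tilde\Psi$ by monotonicity over the central box. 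This bypasses the Hessian entirely: only first-order information about $F_{\bm\chi,\bm\theta,\s}$ is used, at the price of tracking how $\Phi$ moves under one-coordinate perturbations. Your approach would also work, but you should be aware that the off-diagonal Hessian entries are \emph{not} small --- they are of the same order $\kappa^{1/\s-2}/\log\kappa$ as the diagonal ones --- so what you actually need is that the full covariance has operator norm $O(V/\kappa)$, not that the off-diagonals are negligible.

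Two small gaps in your sketch: (i) the MGF asymptotic is not automatic from diagonal/off-diagonal counting alone; the paper first restricts to the set $\mathcal A$ of~\eqref{def_sA_JEV} where each $|P_{\chi_j}|\le (\log T)^{1-\s}/\log\log T$, so that the Taylor expansion of $\exp(\cdot)$ can be truncated uniformly, and then shows $\meas([T,2T]\setminus\mathcal A)$ is negligible (Lemma~\ref{ESAEG_JEV}); (ii) positivity of $\Xi_j$ is not what makes the saddle equations consistent --- that comes automatically from the choice $V_j=\Xi_j V$ --- but rather what makes the Chernoff/tail-cutting step non-vacuous, since one needs $s_j^*V_j = \kappa\alpha_j\Xi_j V>0$.
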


	Now we give some heuristics on the \emph{dependence} of the joint value distribution
	in the critical strip $\frac{1}{2}<\sigma<1$ by comparing it with the situation on the critical line $\s=\frac{1}{2}$.
	Roughly speaking, the value distribution of $\log L(\sigma+it, \chi)$
	can be compared with a truncated Dirichlet series over primes with random coefficients: $\sum_{p\leq X}\frac{\chi(p)\mathcal X(p)}{p^{\sigma+it}}$
	where $\{\mathcal X(p)\}_{p}$ is a sequence of independent random variables uniformly distributed on the unit circle
	and $X$ is certain parameter that will be chosen depending on the distribution of zeros close to the line $\Re(s)=\sigma$.
	On the critical line $\sigma=\frac{1}{2}$, we need a long Dirichlet polynomial (e.g  with length  $X = t^{1/(\log_{2}{t})^{O(1)}}$)
	to determine the distribution of $\log L(\frac{1}{2}+it, \chi)$, since there are many zeros near the critical line
	and we need $\frac{1}{2}\sum_{p \leq X}\frac{|\chi(p)|^{2}}{p} \sim \frac{1}{2}\log \log t$ to capture the variance.
	In this case, the orthogonality of Dirichlet characters is enough to conclude that
	the joint value distributions of different Dirichlet $L$-functions behave like independent random variables asymptotically.
	One of the key points of the independency comes from the following asymptotic formula:
	\begin{align}	\label{KPIDCL}
		\EXP{\l( \sum_{j = 1}^{r}z_{j}\Re\sum_{p \leq X}\frac{\chi_{j}(p) \mathcal{X}(p)}{p^{1/2}} \r)^{2}}
		\sim \sum_{j = 1}^{r}z_{j}^{2}\EXP{\l(\Re\sum_{p \leq X}\frac{\chi_{j}(p) \mathcal{X}(p)}{p^{1/2}} \r)^{2}}
	\end{align}
	which holds for  \emph{any} $z_{1}, \dots, z_{r} \in \CC$ as $X \rightarrow  \infty$.
	When $\sigma>\frac{1}{2}$ we only need a short Dirichlet polynomial (e.g with  length
	(e.g.\ $X \leq (\log t)^{O(1)}$) to determine the distribution of $\log L(\s+it, \chi)$,
	since there are fewer zeros close to the line $\Res=\sigma$
	and the variance $\frac{1}{2}\sum_{p\leq X}\frac{|\chi(p)|^{2}}{p^{2\sigma}}$ converges.
	In contrast to the case of the critical line, for example, we can find \emph{some} $z_1, \dots, z_r\in \mathbb{C}$ such that
	\begin{align}
		\EXP{\l( \sum_{j = 1}^{r}z_{j}\Re\sum_{p \leq X}\frac{\chi_{j}(p) \mathcal{X}(p)}{p^{\s}} \r)^{2}}
		\not\sim \sum_{j = 1}^{r}z_{j}^{2}\EXP{\l(\Re\sum_{p \leq X}\frac{\chi_{j}(p) \mathcal{X}(p)}{p^{\s}} \r)^{2}}
	\end{align}
	as $X \rightarrow \infty$ when $\sigma>\frac{1}{2}$, which then prevent distinct Dirichlet $L$-functions from being independent. The choice for $z_{1}, \dots, z_{r}$ is related to the choice of $\bm{\a}$ in Theorem \ref{GDP_DC}.

	Finally, we remark that it is an interesting question to obtain precise results on dependence and simultaneous extreme value of Dirichlet $L$-functions on the line $\s = 1$.
	It is feasible that the strategy of the paper together with the work of Granville-Soundararajan \cite{GS} will yield results in this direction,
	though some new difficulties may occur when examining the behavior of $\Xi_{j}$.

\section{\textbf{Preliminaries}}

	From now on, we use $\{ \mathcal{X}(p) \}_{p \in \mathcal{P}}$ to denote a sequence of independent random variables on a probability space $(\Omega, \mathscr{A}, \PP)$
	uniformly distributed on the unit circle in $\CC$.

	\begin{lemma}	\label{SLL}
		Let $T \geq 5$.
		Let $\{a(p)\}$ be a complex sequence over prime numbers.
		For any $X \geq 3$, $k \in \ZZ_{\geq 1}$ such that $X^{k} \leq T$, we have
		\begin{align} \label{SLL1}
			\int_{T}^{2T}\bigg| \sum_{p \leq X}\frac{a(p)}{p^{it}} \bigg|^{2k}dt
			\ll T k! \l( \sum_{p \leq X}|a(p)|^2 \r)^{k},
		\end{align}
		and for any $X \geq 3$, $k \in \ZZ_{\geq 1}$
		\begin{align} \label{SLL2}
			\EXP{\bigg| \sum_{p \leq X}a(p)\mathcal{X}(p) \bigg|^{2k}}
			\leq k! \l( \sum_{p \leq X}|a(p)|^2 \r)^{k}.
		\end{align}
		Here, the above sums run over prime numbers, and $\EXP{}$ is the expectation.
	\end{lemma}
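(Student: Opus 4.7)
The plan is to deduce both inequalities from a single counting identity for the coefficients of the $k$-th power Dirichlet polynomial. For the random estimate \eqref{SLL2}, I would expand
\begin{align*}
  \Bigl|\sum_{p \leq X} a(p)\mathcal{X}(p)\Bigr|^{2k}
  = \sum_{\substack{p_1,\dots,p_k \leq X \\ q_1,\dots,q_k \leq X}}
  \prod_{j=1}^{k} a(p_j)\overline{a(q_j)} \prod_{j=1}^{k} \mathcal{X}(p_j)\overline{\mathcal{X}(q_j)}.
\end{align*}
Since each $\mathcal{X}(p)$ is uniform on the unit circle and the $\mathcal{X}(p)$ are independent across $p$, we have $\EXP{\mathcal{X}(p)^{m}\overline{\mathcal{X}(p)}^{n}} = \mathbf{1}_{\{m=n\}}$, so only tuples with multiset equality $\{p_1,\dots,p_k\}=\{q_1,\dots,q_k\}$ survive the expectation. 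Grouping such tuples by their multiplicity profile $\bm m = (m_p)_p$ with $\sum_p m_p = k$ contributes a factor $(k!/\prod_p m_p!)^{2}$, giving
\begin{align*}
  \EXP{\Bigl|\sum_{p \leq X} a(p)\mathcal{X}(p)\Bigr|^{2k}}
  = \sum_{\bm m \,:\, \sum m_p = k}
  \Bigl(\frac{k!}{\prod_p m_p!}\Bigr)^{2} \prod_{p} |a(p)|^{2 m_p}.
\end{align*}
Using $(k!/\prod m_p!)^{2} \leq k! \cdot (k!/\prod m_p!)$ and recognising the resulting sum as the multinomial expansion of $(\sum_{p\leq X}|a(p)|^{2})^{k}$ yields \eqref{SLL2}.

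For the integrated mean value \eqref{SLL1}, I would first expand the $k$-th power into a Dirichlet polynomial
\begin{align*}
  \Bigl(\sum_{p \leq X}\frac{a(p)}{p^{it}}\Bigr)^{k}
  = \sum_{n \leq X^{k}} \frac{b_k(n)}{n^{it}},
  \qquad
  b_k(n) \ceq \sum_{\substack{p_1 \cdots p_k = n \\ p_i \leq X}} a(p_1)\cdots a(p_k),
\end{align*}
so that the integrand is the squared modulus of a Dirichlet polynomial of length $N \leq X^{k} \leq T$. Applying the Montgomery--Vaughan mean value theorem $\int_0^{T'}|\sum_{n \leq N} c_n n^{-it}|^2 dt = \sum_n |c_n|^{2}(T' + O(n))$ to the intervals $[0,2T]$ and $[0,T]$ and subtracting yields
\begin{align*}
  \int_T^{2T} \Bigl|\sum_{n \leq X^{k}} \frac{b_k(n)}{n^{it}}\Bigr|^{2} dt
  = \sum_{n \leq X^{k}} |b_k(n)|^{2}\bigl(T + O(n)\bigr)
  \ll T \sum_{n \leq X^{k}} |b_k(n)|^{2},
\end{align*}
where the off-diagonal is absorbed by the diagonal thanks to the hypothesis $n \leq X^{k} \leq T$. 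The arithmetic bound $\sum_{n}|b_k(n)|^{2} \leq k! (\sum_{p \leq X} |a(p)|^{2})^{k}$ is then precisely the identity from the first paragraph, since for $n = \prod_p p^{m_p}$ one has $|b_k(n)|^{2} = (k!/\prod_p m_p!)^{2} \prod_p |a(p)|^{2 m_p}$.

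The argument is entirely routine; the one step requiring the hypothesis is the use of $X^{k} \leq T$ to absorb the off-diagonal Montgomery--Vaughan contribution into the diagonal, and I do not anticipate any genuine obstacle beyond this.
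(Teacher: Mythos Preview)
Your argument is correct. The expansion and the use of $\EXP{\mathcal{X}(p)^{m}\overline{\mathcal{X}(p)}^{n}}=\mathbf{1}_{\{m=n\}}$ together with the multinomial bound $(k!/\prod_p m_p!)^{2}\le k!\cdot(k!/\prod_p m_p!)$ give \eqref{SLL2} cleanly, and the passage from Montgomery--Vaughan to \eqref{SLL1} via $n\le X^{k}\le T$ is exactly the point where the hypothesis is used.

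The paper does not actually prove this lemma; it simply cites \cite[Lemma~3.6 and Lemma~4.3]{IL2021}. Your write-up is the standard direct proof that those cited lemmas encapsulate, so you are supplying what the paper outsources. There is no genuine methodological difference---the only ``difference'' is that you spell out the Montgomery--Vaughan/multinomial argument explicitly, which has the benefit of making the role of the condition $X^{k}\le T$ transparent.
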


	\begin{proof}
		This follows from  \cite[Lemma 3.6]{IL2021} and \cite[Lemma 4.3]{IL2021}.
	\end{proof}

	\begin{lemma} \label{GRLR}
		Let $\{b_{1}(m)\}, \dots, \{ b_{n}(m) \}$ be complex sequences.
		For any $q_{1}, \dots, q_{s}$ distinct prime numbers and any $k_{1, 1}, \dots k_{1, n}, \dots,  k_{s, 1}, \dots, k_{s, n} \in \ZZ_{\geq 1}$,
		we have
		\begin{align*}
			& \frac{1}{T}\int_{T}^{2T}\prod_{\ell = 1}^{s}
			\l(\Re b_{1}(q_{\ell}^{k_{1, \ell}}) q_{\ell}^{-i t k_{1, \ell}}\r)
			\cdots \l(\Re b_{n}(q_{\ell}^{k_{n, \ell}}) q_{\ell}^{-i t k_{n, \ell}}\r)dt                               \\
			& =  \EXP{\prod_{\ell = 1}^{s}\l(\Re b_{1}(q_{\ell}^{k_{1, \ell}}) \mathcal{X}(q_{\ell})^{k_{1, \ell}}\r)
			\cdots \l(\Re b_{n}(q_{\ell}^{k_{n, \ell}}) \mathcal{X}(q_{\ell})^{k_{n, \ell}}\r)}
			+ O\l( \frac{1}{T}\prod_{\ell = 1}^{s}\prod_{j = 1}^{n}q_{\ell}^{k_{j, \ell}}|b_{j}(q_{\ell}^{k_{j, \ell}})| \r).
		\end{align*}
	\end{lemma}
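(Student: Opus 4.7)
My plan is to reduce both sides of the identity to the same algebraic sum over sign vectors by expanding each $\Re$ as a sum of two complex exponentials and then comparing term by term. Write $a_{j,\ell} = b_{j}(q_{\ell}^{k_{j,\ell}})$. Since $q_\ell^{-it}$ and $\mathcal X(q_\ell)$ both lie on the unit circle, we have
\begin{align}
\Re\bigl(a_{j,\ell} z^{k_{j,\ell}}\bigr) = \tfrac{1}{2}\bigl(a_{j,\ell} z^{k_{j,\ell}} + \overline{a_{j,\ell}} z^{-k_{j,\ell}}\bigr)
\end{align}
for both $z = q_\ell^{-it}$ and $z = \mathcal X(q_\ell)$. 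Multiplying over $j$ and $\ell$, each of the two product expressions becomes
\begin{align}
\frac{1}{2^{ns}}\sum_{\epsilon \in \{\pm 1\}^{ns}}\biggl(\prod_{j,\ell}a_{j,\ell}^{[\epsilon_{j,\ell}]}\biggr)\prod_{\ell=1}^{s}z_\ell^{M_\ell(\epsilon)},
\end{align}
where $a^{[+1]}=a$, $a^{[-1]}=\bar a$, $M_\ell(\epsilon) = \sum_{j=1}^{n}\epsilon_{j,\ell}k_{j,\ell}$, and $z_\ell$ equals either $q_\ell^{-it}$ or $\mathcal X(q_\ell)$.

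Next I would identify the \emph{diagonal} sign vectors, namely those $\epsilon$ for which $M_\ell(\epsilon)=0$ for every $\ell$. On the integral side these contribute $\frac{1}{T}\int_T^{2T}1\,dt=1$; on the random side, by independence and $\mathbb E[\mathcal X(q)^m] = \mathbf 1\{m=0\}$, only the same diagonal vectors survive, each contributing $1$. Hence the two ``main terms'' coincide term by term and the claimed leading expression falls out automatically.

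It remains to bound the off-diagonal integrals. For such an $\epsilon$, $\prod_\ell q_\ell^{-itM_\ell(\epsilon)} = e^{-itS}$ with $S = \sum_\ell M_\ell(\epsilon)\log q_\ell$, so $\bigl|\int_T^{2T}e^{-itS}dt\bigr| \leq 2/|S|$. The key point is that $R=\prod_\ell q_\ell^{M_\ell(\epsilon)}$ is a positive rational distinct from $1$, and unique factorization into the distinct primes $q_\ell$ lets me write $R = A/B$ in lowest terms with $A, B \leq \prod_\ell q_\ell^{|M_\ell(\epsilon)|}$; the elementary inequality $|\log(A/B)| \geq 1/\max(A,B)$ then yields
\begin{align}
|S|^{-1} \;\leq\; \prod_{\ell=1}^{s}q_\ell^{|M_\ell(\epsilon)|} \;\leq\; \prod_{j=1}^{n}\prod_{\ell=1}^{s}q_\ell^{k_{j,\ell}}.
\end{align}
Since $|a_{j,\ell}^{[\pm 1]}|=|b_j(q_\ell^{k_{j,\ell}})|$ and the number of sign vectors $2^{ns}$ is at most the product $\prod_{j,\ell}q_\ell^{k_{j,\ell}}$ (each factor being $\geq 2$), summing over $\epsilon$ absorbs the combinatorial multiplicity into the asserted error term. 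The only step requiring genuine thought is the rational lower bound $|\log(A/B)|\geq 1/\max(A,B)$; the remainder is careful bookkeeping of the $2^{ns}$ sign combinations.
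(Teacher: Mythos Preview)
Your argument is correct and follows the standard route for such comparisons: expand each real part as $\tfrac12(az^{k}+\bar a z^{-k})$, identify the diagonal sign vectors (those with $M_\ell(\epsilon)=0$ for all $\ell$) as the common main term on both sides, and bound each off-diagonal oscillatory integral via $\bigl|\int_T^{2T}e^{-itS}\,dt\bigr|\leq 2/|S|$ together with the rational lower bound $|\log(A/B)|\geq 1/\max(A,B)$ for distinct positive integers $A,B$. The paper itself does not supply a proof here but simply invokes \cite[Lemma~4.2]{IL2021}; what you have written is exactly the kind of direct computation that citation encodes.

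One small point of bookkeeping: your final remark that $2^{ns}\leq\prod_{j,\ell}q_\ell^{k_{j,\ell}}$ is unnecessary. The expansion already carries the prefactor $2^{-ns}$, so summing over the (at most) $2^{ns}$ off-diagonal sign vectors cancels it exactly and delivers the stated error $O\bigl(T^{-1}\prod_{j,\ell}q_\ell^{k_{j,\ell}}|b_j(q_\ell^{k_{j,\ell}})|\bigr)$ with no further absorption needed.
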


	\begin{proof}
		This is \cite[Lemma 4.2]{IL2021}.
	\end{proof}

	\begin{lemma}	\label{KLST}
		Let $\chi$ be a Dirichlet character modulo $q$.
		Let $\s > \frac{1}{2}$.
		There exist positive constants $\delta_{\chi}$, $A = A(\s, \chi)$ such that
		for any $k \in \ZZ_{\geq 1}$, $3 \leq X \leq T^{1/k}$,
		\begin{align} \label{MVIPC}
			&\frac{1}{T}\int_{T}^{2T}\l| \log{L(\s + it, \chi)} - \sum_{2 \leq n \leq X}\frac{\Lam(n)\chi^{\star}(n)}{n^{\s + it} \log{n}}
			- \sum_{p \mid q} \log\l( 1 - \frac{\chi^{\star}(p)}{p^{\s + it}} \r) \r|^{2k}dt\\
			&\leq A^{k} k^{4k}T^{\delta_{\chi}(1 - 2\s)}
			+ A^{k} k^{k} \l( \frac{X^{1 - 2\s}}{\log{X}} \r)^{k}.
		\end{align}
		Here, $\Lam(n)$ is the von Mangoldt function, and $\chi^{\star}$ is the primitive character that induces $\chi$.
	\end{lemma}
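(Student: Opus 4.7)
The plan is to reduce to the case of a primitive character and then apply a smoothed Perron-type identity followed by a contour shift that picks up the nontrivial zeros of $L(\cdot,\chi^{\star})$. By the Euler product relation $L(s,\chi) = L(s,\chi^{\star})\prod_{p\mid q}(1-\chi^{\star}(p)p^{-s})$, the quantity inside the absolute value on the left of \eqref{MVIPC} is exactly
\[
R(t) \ceq \log{L(\s+it,\chi^{\star})} - \sum_{2\le n\le X}\frac{\Lam(n)\chi^{\star}(n)}{n^{\s+it}\log n},
\]
so it suffices to bound the $2k$-th moment of $R(t)$ for primitive $\chi^{\star}$.

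Let $\psi$ be a smooth cutoff supported in $[0,2]$ with $\psi(x)=1$ for $x\le 1$, and let $\tilde\psi$ denote its Mellin transform. Starting from the identity
\[
\sum_{n\ge 2}\frac{\Lam(n)\chi^{\star}(n)}{n^{\s+it}\log n}\psi(n/X) = \frac{1}{2\pi i}\int_{(2)}\log{L(\s+it+w,\chi^{\star})}\,\tilde\psi(w)\,X^{w}\,dw,
\]
I would shift the contour to $\Re(w) = \tfrac{1}{2}-\s+\eta$ for small $\eta>0$, picking up the residue at $w=0$ (which gives $\log L(\s+it,\chi^{\star})$) and the residues at $w = \rho - \s - it$ for the nontrivial zeros $\rho$ of $L(\cdot,\chi^{\star})$ lying inside the rectangle. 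Rearranging produces a decomposition $R(t) = E_{\mathrm{tail}}(t) + E_{\mathrm{zero}}(t)$, where $E_{\mathrm{tail}}(t)$ combines the smoothed tail (supported on $n > X$) with the shifted-line integral, and $E_{\mathrm{zero}}(t)$ is the sum of residues. After isolating the absolutely convergent prime-power contribution, $E_{\mathrm{tail}}(t)$ reduces essentially to a Dirichlet polynomial over primes $p\gg X$ whose $\ell^{2}$ coefficient mass is $\ll X^{1-2\s}/\log X$; Lemma \ref{SLL} then yields the second summand $A^{k}k^{k}(X^{1-2\s}/\log X)^{k}$.

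The main obstacle is bounding the $2k$-th moment of $E_{\mathrm{zero}}(t)$, since this requires quantitative control of zeros of $L(\cdot,\chi^{\star})$ close to the line $\Re(s)=\s$. The plan is to invoke a classical zero-density estimate of the form $N(\b,T,\chi^{\star})\ll T^{\delta_{\chi}(1-\b)}(\log T)^{O(1)}$ valid uniformly for $\tfrac{1}{2}\le\b\le 1$, from which one deduces that the exceptional set of $t\in[T,2T]$ with some zero $\rho$ inside a bounded horizontal box around $\s+it$ has measure $\ll T^{1+\delta_{\chi}(1-2\s)}(\log T)^{O(1)}$. Outside this exceptional set, the Hadamard product for $L(\cdot,\chi^{\star})$ combined with the rapid decay of $\tilde\psi$ yields a pointwise bound $|E_{\mathrm{zero}}(t)|\ll \log T$, while inside it the looser bound $|E_{\mathrm{zero}}(t)|\ll k\log T$ is sufficient. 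Raising to the $2k$-th power and combining with the measure bound on the exceptional set produces the first summand $A^{k}k^{4k}T^{\delta_{\chi}(1-2\s)}$, where the inflation from $k^{2k}$ to $k^{4k}$ absorbs the factor $(\log T)^{2k}$ coming from the pointwise bound. Balancing the pointwise estimate against the measure of the exceptional set is the principal technical step.
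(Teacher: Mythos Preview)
The paper's proof is much shorter: for primitive $\chi$ it simply invokes \cite[Proposition~3.3]{IL2021} (applicable because $L(s,\chi)$ lies in the Selberg class and satisfies a strong zero-density estimate by \cite{F1974}), obtaining the right-hand side with $\bigl(\sum_{X<p\le T^{1/k}}|\chi(p)|^{2}p^{-2\s}\bigr)^{k}$ in place of $(X^{1-2\s}/\log X)^{k}$, and then bounds that sum by partial summation. Your reduction from imprimitive to primitive via the Euler-factor identity matches the paper's.

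Your from-scratch sketch has the right architecture (tail via Lemma~\ref{SLL}, zero contribution via density), but the contour-shift step contains a genuine error: $\log L(\s+it+w,\chi^{\star})$ has \emph{logarithmic branch points} at $w=\rho-\s-it$, not poles, so there are no residues to collect and a naive shift crosses branch cuts. The standard remedy is to work instead with $-(L'/L)(s+w)$, which does have simple poles at the zeros, and then integrate (or build an extra $w^{-1}$ into the Mellin kernel); alternatively one quotes a Selberg--Tsang approximation lemma that already expresses $\log L(s)$ as a short Dirichlet polynomial plus an explicit sum over nearby zeros. There is also a bookkeeping slip: a pointwise bound $|E_{\mathrm{zero}}(t)|\ll\log T$ on the \emph{good} set is too weak, since $(\log T)^{2k}$ cannot be absorbed into $A^{k}k^{4k}T^{\delta_{\chi}(1-2\s)}$ without a constraint $\log T\ll k$ that does not follow from $X\le T^{1/k}$. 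In the correct argument the good-set error decays with the truncation parameter, and only the bad set (of measure a fixed negative power of $T$) carries the $\log T$ losses, which are then beaten by that power saving.
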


	\begin{proof}
		We consider the case when $\chi$ is primitive first. In this case we know that $L(s, \chi)$ belongs to the Selberg class,
		satisfies a strong zero density estimate (see \cite[Lemma 1]{F1974}), and $\sum_{p \mid q} \log\l( 1 - \frac{\chi^{\star}(p)}{p^{s}} \r)=0$.
		Hence, we derive from \cite[Proposition 3.3]{IL2021} that
		\begin{align*}
			& \frac{1}{T}\int_{T}^{2T}\l| \log{L(\s + it, \chi)} - \sum_{2 \leq n \leq X}\frac{\Lam(n)\chi(n)}{n^{\s + it} \log{n}} \r|^{2k}dt\\
			&\leq A^{k} k^{4k}T^{\delta_{\chi}(1 - 2\s)}
			+ A^{k} k^{k} \l( \sum_{X < p \leq T^{1/k}}\frac{|\chi(p)|^{2}}{p^{2\s}} \r)^{k}
		\end{align*}
		for some constants $\delta_{\chi} > 0$, $A = A(\s, \chi) > 0$.
		Using the prime number theorem and partial summation, we see that
		\begin{align*}
			\sum_{X < p \leq Y}\frac{|\chi(p)|^2}{p^{2\s}}
			\leq \sum_{p > X}\frac{1}{p^{2\s}}
			\ll_{\s} \frac{X^{1 - 2\s}}{\log{X}},
		\end{align*}
		which completes the proof in the case of primitive characters.

		When $\chi$ is imprimitive, we use the formula
		\begin{align*}
			\log{L(s, \chi)}
			= \log{L(s, \chi^{\star})} + \sum_{p \mid q} \log\l( 1 - \frac{\chi^{\star}(p)}{p^{s}} \r),
		\end{align*}
		which together with the primitive case completes the proof.
	\end{proof}

\section{\textbf{Approximate formulae for moment generating functions}}

	In this section, we give an approximate formula for moment generating functions of an $r$-tuple of Dirichlet polynomials supported on primes, in analogue to \cite[Section 4.1]{IL2021}.
	Throughout this section, we suppose that $\bm{a}(p) = (a_{1}(p), \dots, a_{r}(p))$
	is an $r$-tuple of bounded sequences supported on prime numbers.
	We define $\| \cdot \|$ stands for the maximum norm,
	that is $\| \bm{z} \| = \max_{1 \leq j \leq r}|z_{j}|$ if $\bm{z} = (z_{1}, \dots, z_{r}) \in \CC^{r}$.
	We also write $\| a_{j} \|_{\infty} \ceq \sup_{p}|a_{j}(p)|$ and $\| \bm{a} \| = \|(\| a_{1} \|_{\infty}, \dots, \| a_{r} \|_{\infty})\|$.
	For every $\bm{z} = (z_{1}, \dots, z_{r}) \in \CC^{r}$, $\s, t \in \RR$, and any prime number $p$, we define
	\begin{align}
		\label{def_K_a}
		K_{\bm{a}}(p, \bm{z})
		\ceq \sum_{j=1}^{r}z_{j} a_{j}(p)\sum_{k=1}^{r}z_{k} \ol{a_{k}(p)},
	\end{align}
	and
	\begin{align*}
		P_{j}(\s+it, X)
		\ceq \sum_{p \leq X}\frac{a_{j}(p)}{p^{\s+it}}.
	\end{align*}
	To compute the moment generating function of $(P_{j}(\s + it, X) )_{j = 1}^{r}$, we work on a subset $\mathcal A$ where the Dirichlet polynomials do not obtain large values. Precisely,
	let $\mathcal{A} = \mathcal{A}(T, X, \bm{a})$ be
	\begin{align}	\label{def_sA_JEV}
		& \mathcal{A} =
		\bigcap_{j = 1}^{r}\set{t \in [T, 2T]}{\l|P_{j}(\s+it, X) \r|
		\leq \frac{(\log{T})^{1-\s}}{\log{\log{T}}}}.
	\end{align}

	We first show that the measure of $\mathcal{A}$ is close to $T$.

	\begin{lemma}	\label{ESAEG_JEV}
		Let $L \geq 1$.
		Let $T, X$ be large numbers with $X \leq (\log{T})^{L}$.
		Then, there exists a positive number $b_{0} = b_{0}(\s, L, \| \bm{a} \|, r)$ such that
		\begin{align*}
			\frac{1}{T}\meas([T, 2T] \setminus \mathcal{A})
			\leq \exp\l(- b_{0}\frac{\log{T}}{\log{\log{T}}} \r).
		\end{align*}
	\end{lemma}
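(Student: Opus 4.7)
The plan is to apply a union bound over $j$, reduce to the real part of each $P_j$ using $|P_j|^2 = (\Re P_j)^2 + (\Im P_j)^2$, and then establish the bound via a Chernoff-type (exponential moment) estimate obtained by comparison with the random model.

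Since the event $|P_j(\s+it, X)| > V$ with $V = (\log T)^{1-\s}/\log\log T$ implies one of the four events $\pm \Re P_j > V/\sqrt 2$ or $\pm \Im P_j > V/\sqrt 2$, by the union bound and symmetry it suffices to show, for each $j$, that
\begin{equation*}
\meas\{t \in [T, 2T] : \Re P_j(\s + it, X) > V/\sqrt 2\} \ll T\exp(-b_0' \log T/\log\log T).
\end{equation*}
At first glance one might attempt a $(2k)$-th moment bound via Lemma~\ref{SLL}; however, optimizing the moment order only yields the Gaussian-type rate $\exp(-cV^2/M)$ with $M = \sum_{p \leq X}|a_j(p)|^2/p^{2\s} = O_{\s, \|\bm a\|}(1)$. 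This is \emph{too weak} since $V^2/M = o(\log T/\log\log T)$ for $\s > \frac{1}{2}$.

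To get the sharper rate, apply Markov to the exponential: for $\lambda > 0$,
\begin{equation*}
\meas\{\Re P_j > V/\sqrt 2\} \leq e^{-\lambda V/\sqrt 2}\int_T^{2T}\exp(\lambda \Re P_j(\s+it, X))\,dt.
\end{equation*}
Taylor-expanding $e^{\lambda \Re P_j}$ and applying Lemma~\ref{GRLR} term by term to compare each moment $\frac{1}{T}\int (\Re P_j)^n dt$ with the random-model moment $\EXP{(\Re P_j^{\mathrm{rand}})^n}$ (for orders $n \leq K := \log T/(L\log\log T)$), the exponential integral matches the random-model MGF $\prod_{p \leq X}I_0(\lambda|a_j(p)|/p^\s)$ up to a controllable error, where $I_0$ is the modified Bessel function. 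Using the Bessel bounds $\log I_0(x) \leq x^2/4$ for $x \leq 4$ and $\log I_0(x) \leq x$ for $x > 4$, splitting the prime sum at $p \sim \lambda^{1/\s}$ and invoking PNT, one obtains $\log\prod_{p \leq X}I_0(\lambda|a_j(p)|/p^\s) \ll_{\s, \|\bm a\|} \lambda^{1/\s}/\log\lambda$. Optimizing over $\lambda_0 \asymp V^{\s/(1-\s)}(\log V)^{\s/(1-\s)}$ yields a Chernoff bound of the form $\exp(-cV^{1/(1-\s)}(\log V)^{\s/(1-\s)})$, which at $V = (\log T)^{1-\s}/\log\log T$ evaluates to $\exp(-b_0\log T/\log\log T)$, as claimed.

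The main obstacle is controlling the Taylor-series tail $\sum_{n > K}\lambda_0^n \EXP{(\Re P_j)^n}/n!$ at the saddle point. Since $\lambda_0$ can be as large as $(\log T)^\s$, while the naive moment bound $\EXP{|\Re P_j|^{2k}} \ll k!M^k$ from Lemma~\ref{SLL} only applies for $k \leq K \sim \log T/\log\log T$, a sharper moment estimate roughly of the form $\EXP{|\Re P_j^{\mathrm{rand}}|^{2k}} \ll k^{2(1-\s)k}$ (from a saddle-point analysis of the random-model moment) is needed to ensure the tail is dominated by the main Chernoff contribution at $\lambda = \lambda_0$. Verifying this balance, together with the admissibility of $\lambda_0$ within the range where Lemma~\ref{GRLR}'s comparison is valid, is the technical heart of the argument.
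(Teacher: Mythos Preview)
Your dismissal of the $(2k)$-th moment approach is premature, and in fact the paper proves the lemma by exactly that route. The key point you are missing is that one should not apply Lemma~\ref{SLL} to the full sum $P_j$: instead split
\[
P_j(\s+it,X) = \sum_{p \le k\log 2k} + \sum_{k\log 2k < p \le X}.
\]
The short sum is bounded \emph{pointwise} by $\sum_{p\le k\log 2k}|a_j(p)|p^{-\s}\ll k^{1-\s}/(\log 2k)^\s$ via the prime number theorem. For the tail sum one applies Lemma~\ref{SLL}, but now with $\sum_{p>k\log 2k}|a_j(p)|^2 p^{-2\s}\ll (k\log 2k)^{1-2\s}/\log k$, so that $k!\,(\cdot)^k \ll (k^{1-\s}/(\log 2k)^\s)^{2k}$. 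Combining,
\[
\frac{1}{T}\int_T^{2T}|P_j(\s+it,X)|^{2k}\,dt \ll \Big(\frac{Bk^{1-\s}}{(\log 2k)^\s}\Big)^{2k}
\qquad\text{for }1\le k\le \frac{\log T}{L\log\log T},
\]
and Chebyshev with $k=\lfloor c\log T/\log\log T\rfloor$ gives the lemma immediately. Your computation that the naive moment bound yields only $\exp(-cV^2/M)$ is correct, but it is the \emph{splitting} at $p\sim k\log k$ that upgrades the moment bound from $k!\,M^k$ to $(k^{1-\s})^{2k}$; this is precisely the ``sharper moment estimate roughly of the form $\ll k^{2(1-\s)k}$'' you say you need.

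Your Chernoff route would work, but it is a detour: the ``main obstacle'' you identify---controlling the Taylor tail $\sum_{n>K}\lambda_0^n\int(\Re P_j)^n/n!$ at $\lambda_0\asymp(\log T)^\s$---requires exactly the refined moment bound above for the $t$-integral (not merely for the random model, since Lemma~\ref{GRLR} only compares moments of order $\lesssim \log T/\log\log T$). Once you have that moment bound, Chebyshev already finishes the job and the exponential-moment machinery is unnecessary. In short, the splitting trick is the entire content of the lemma; your proposal both overlooks it and would need it anyway.
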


	\begin{proof}
		Let $k$ be a positive integer.
		If $X \leq k \log{2k}$, then we see that
		\begin{align*}
			\frac{1}{T}\int_{0}^{T}\bigg| \sum_{p \leq X}\frac{a_{j}(p)}{p^{\s+it}} \bigg|^{2k}dt
			&\ll \l( \sum_{p \leq k\log{2k}}\frac{|a_{j}(p)|}{p^{\s}} \r)^{2k}
			\ll \l(\frac{C_{1} k^{1 - \sigma}}{(\log{2k})^{\s}} \r)^{2k}
		\end{align*}
		for some constant $C_{1} > 0$ depending on $\s$ and $\| a_{j} \|_{\infty}$.
		Suppose that the inequality $k \log{2k} < X$ holds.
		We then write
		\begin{align*}
			&\int_{T}^{2T}|P_{j}(\s + it, X)|^{2k}dt
			\leq 4^{k}\l( \int_{T}^{2T}\bigg| \sum_{p \leq k \log{2k}}\frac{a_{j}(p)}{p^{\s+it}} \bigg|^{2k}dt
			+ \int_{T}^{2T}\bigg| \sum_{k\log{2k} < p \leq X}\frac{a_{j}(p)}{p^{\s+it}} \bigg|^{2k}dt \r).
		\end{align*}
		By estimate \eqref{SLL1} and the prime number theorem, it holds that, for $1 \leq k \leq \frac{\log{T}}{L \log{\log{T}}}$,
		\begin{align*}
			\frac{1}{T}\int_{T}^{2T}\bigg| \sum_{k \log 2k < p \leq X}\frac{a_{j}(p)}{p^{\s+it}} \bigg|^{2k}dt
			&\ll k! \l( \sum_{k \log 2k < p \leq X}\frac{|a_{j}(p)|^{2}}{p^{2\s}} \r)^{k}
			\ll \l(\frac{ C_{2} k^{1 - \sigma} }{(\log 2k)^{\s}} \r)^{2k},
		\end{align*}
		where $C_{2}$ is a positive constant which may depend on $\s$ and $\| a_{j} \|_{\infty}$.
		Furthermore, by the prime number theorem it follows that
		\begin{align}\label{eq_FS}
			\frac{1}{T}\int_{T}^{2T}\bigg| \sum_{p \leq k\log{2k}}\frac{a_{j}(p)}{p^{\s+it}} \bigg|^{2k}dt
			&\ll \l( \sum_{p \leq k\log{2k}}\frac{|a_{j}(p)|}{p^{\s}} \r)^{2k}
			\ll \l(\frac{C_{3} k^{1 - \sigma}}{(\log{2k})^{\s}} \r)^{2k}
		\end{align}
		for some positive constant $C_{3}$ which may depend on $\s$ and $\| a_{j} \|_{\infty}$.
		Hence, we have
		\begin{align} \label{MVEP}
			\frac{1}{T}\int_{T}^{2T}| P_{j}(\s + it, X)|^{2k}dt
			\ll \l(\frac{B_{j} k^{1 - \sigma}}{(\log{2k})^{\s}} \r)^{2k}
		\end{align}
		for any $1 \leq k \leq \frac{\log{T}}{L \log{\log{T}}}$.
		Here $B_{j} = B(a_{j}, \s) \ceq \max_{1 \leq i \leq 3} C_{i}$.
		Therefore, there exist positive constants $B_{j} = B_{j}(\s, \| a_{j} \|_{\infty})$ such that
		\begin{align*}
			& \frac{1}{T}\meas\set{t \in [T, 2T]}{\l|P_{j}(\s+it, X)\r|
			> \frac{(\log{T})^{1-\s}}{\log{\log{T}}}}
			\leq \l( B_{j}\frac{k^{1-\s} \log{\log{T}}}{(\log{2k})^{\s}(\log{T})^{1-\s}} \r)^{2k}
		\end{align*}
		holds for any $2 \leq k \leq \frac{\log{T}}{L \log{\log{T}}}$.
		Hence, we have
		\begin{align*}
			\frac{1}{T}\meas([T, 2T] \setminus \mathcal{A})
			& \leq \frac{1}{T}\sum_{j = 1}^{r}\meas\set{t \in [T, 2T]}{\l|P_{j}(\s+it, X) \r|
			> \frac{(\log{T})^{1-\s}}{\log{\log{T}}}}                                                    \\
			& \leq \l( B\frac{k^{1-\s} \log{\log{T}}}{(\log{k})^{\s}(\log{T})^{1-\s}} \r)^{2k},
		\end{align*}
		where $B = r \cdot \max_{1 \leq j \leq r}B_{j}$.
		By choosing $k = [c \log{T} / \log{\log{T}}]$
		for a suitably small constant $c = c(\s, L, \| \bm{a} \|, r)$, we complete the proof.
	\end{proof}

	Next we compute the moment generating function of $(P_{j}(\s + it, X) )_{j = 1}^{r}$ on $\mathcal A$.

	\begin{proposition} \label{MPJEV}
		Let $\frac{1}{2} < \s < 1$, $L \geq 1$ be fixed.
		There exists a positive constant
		$b_{1} = b_{1}(\s, L, \| \bm{a} \|, r)$ such that
		for large $T$, $X = (\log{T})^{L}$, and $\bm{z} = (z_1, \dots, z_{r}) \in \CC^{r}$
		with $\|\bm{z}\| \leq b_{1}(\log{T})^{\s}$, we have
		\begin{align*}
			& \frac{1}{T}\int_{\mathcal{A}}\exp\l( \sum_{j = 1}^{r} z_{j} \Re P_{j}(\s + it, X) \r)dt \\
			& = \prod_{p \leq X}I_{0}\l(\sqrt{K_{\bm{a}}(p, \bm{z}) / p^{2\s}}\r)
			+ O\l(\exp\l( - b_{1}\frac{\log{T}}{\log{\log{T}}} \r)\r).
		\end{align*}
		Here $I_0(z)$ is the $0$-th modified Bessel function.
	\end{proposition}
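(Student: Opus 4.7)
The plan is to follow the standard template for moment generating functions of short Dirichlet polynomials (as in \cite[Section 4]{IL2021}): Taylor expand the exponential, truncate at a degree $K$ matched to the size of the integrand on $\mathcal{A}$, apply Lemma \ref{GRLR} term by term to replace each time average by its expectation over the independent random variables $\{\mathcal{X}(p)\}$, and finally identify the resulting expectation with $\prod_{p\leq X} I_0(\sqrt{K_{\bm a}(p,\bm z)/p^{2\sigma}})$. Setting $Y(t)\ceq\sum_{j=1}^r z_j\Re P_j(\sigma+it,X)$, the key structural observation is $Y(t)=\sum_{p\leq X}Y_p(t)$, where
\begin{align*}
Y_p(t)=\tfrac{1}{2p^\sigma}\Big(\sum_{j=1}^r z_j a_j(p)\Big)p^{-it}+\tfrac{1}{2p^\sigma}\Big(\sum_{j=1}^r z_j\ol{a_j(p)}\Big)p^{it},
\end{align*}
so the $p$-dependence enters only through $p^{\pm it}$. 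Substituting $p^{-it}\mapsto\mathcal{X}(p)$ and using independence of the $\mathcal{X}(p)$'s produces $\EXP{\exp Y^{\mathrm{rand}}_p}=I_0(\sqrt{K_{\bm a}(p,\bm z)/p^{2\sigma}})$ via the elementary identity $\frac{1}{2\pi}\int_0^{2\pi}e^{\alpha e^{i\theta}+\beta e^{-i\theta}}d\theta=I_0(2\sqrt{\alpha\beta})$ applied with $\alpha\beta=K_{\bm a}(p,\bm z)/(4p^{2\sigma})$.

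On $\mathcal{A}$ we have the uniform bound $|Y(t)|\leq M\ceq r\|\bm z\|(\log T)^{1-\sigma}/\log\log T\leq rb_1\log T/\log\log T$. Picking $K=\lfloor C\log T/\log\log T\rfloor$ for a constant $C=C(\sigma,L,\|\bm a\|,r)$ chosen so that $C\gg erb_1$ makes the Taylor remainder $|e^{Y(t)}-\sum_{k\leq K}Y(t)^k/k!|\leq(eM/K)^K\leq\exp(-c\log T/\log\log T)$ pointwise on $\mathcal{A}$. I would then split
\begin{align*}
\frac{1}{T}\int_{\mathcal{A}}e^{Y(t)}dt=\frac{1}{T}\int_T^{2T}\sum_{k\leq K}\frac{Y(t)^k}{k!}dt-\frac{1}{T}\int_{[T,2T]\setminus\mathcal{A}}\sum_{k\leq K}\frac{Y(t)^k}{k!}dt+O\bigl(e^{-c\log T/\log\log T}\bigr),
\end{align*}
and control the $\mathcal{A}^c$-piece by Cauchy--Schwarz combined with Lemma \ref{ESAEG_JEV} and the even moment bound \eqref{MVEP}; provided $b_1$ is small enough that $B_jk^{1-\sigma}\|\bm z\|/(\log 2k)^\sigma$ stays bounded throughout $k\leq K$, this piece is absorbed into an error of size $\exp(-b_1\log T/\log\log T)$.

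For the first integral, I would expand $Y^k=(\sum_{p\leq X}Y_p)^k$ multinomially into monomials in $p^{\pm it}$ and apply Lemma \ref{GRLR} monomial-by-monomial. The main part is the random analogue $\EXP{\sum_{k\leq K}(Y^{\mathrm{rand}})^k/k!}$, while the accumulated error from Lemma \ref{GRLR} is majorised by $T^{-1}X^K=T^{LC-1}$, which is harmless as soon as $LC<1/2$. Extending the truncated random expectation to $\EXP{\exp Y^{\mathrm{rand}}}$ costs another admissible error, controlled via Lemma \ref{SLL2} applied to the random model; independence of $\{\mathcal{X}(p)\}_p$ then lets the expectation factor as the desired Bessel product over $p\leq X$. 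The main obstacle will be arranging the absolute constants $b_1, C, L$ in a compatible way so that the four independent error sources---(i) the deterministic Taylor tail on $\mathcal{A}$, (ii) the $[T,2T]\setminus\mathcal{A}$ contribution, (iii) the off-diagonal error from Lemma \ref{GRLR}, and (iv) the random Taylor tail---all fall below the target threshold $\exp(-b_1\log T/\log\log T)$ simultaneously.
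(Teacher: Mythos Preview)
Your proposal is correct and follows essentially the same approach as the paper: Taylor truncate at order $\asymp\log T/\log\log T$, use Cauchy--Schwarz with Lemma~\ref{ESAEG_JEV} and \eqref{MVEP} to handle $[T,2T]\setminus\mathcal{A}$, apply Lemma~\ref{GRLR} termwise to pass to the random model, extend the truncated random sum via Lemma~\ref{MVERDP} (which is what your Lemma~\ref{SLL}\eqref{SLL2} argument amounts to), and factor using independence and Lemma~\ref{EQEVI0}. One small remark: $L$ is fixed by hypothesis, so the balancing is only among $b_1$ and your truncation constant $C$; also your off-diagonal bound should read $T^{-1}(C\|\bm z\|X^{2})^{K}$ rather than $T^{-1}X^{K}$, but this is still $\leq T^{-1/3}$ for the same reason once $C$ is chosen with $C\leq 1/(4L)$.
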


	To prove Proposition \ref{MPJEV}, we require two lemmas.

	\begin{lemma} \label{MVERDP}
		For every $1 \leq j \leq r$, define
		\begin{align} \label{def_RDP}
			P_{j}(\s, \mathcal{X}, X) = \sum_{p \leq X}\frac{a_{j}(p) \mathcal{X}(p)}{p^{\s}}.
		\end{align}
		Then there exists a positive constant $C = C(\s, \| a_{j} \|_{\infty})$ such that for any $X \geq 3$, $k \in \ZZ_{\geq 1}$
		\begin{align*}
			\EXP{\l|P_{j}(\s, \mathcal{X}, X)\r|^{k}}
			\leq \l( \frac{C k^{1-\s}}{(\log{2k})^{\s}} \r)^{k}.
		\end{align*}
	\end{lemma}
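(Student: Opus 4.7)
The bound to be proved is the random analogue of estimate \eqref{MVEP} for $P_j(\s+it,X)$, and it is natural to imitate that deterministic argument directly using the moment bound \eqref{SLL2} of Lemma \ref{SLL} in place of \eqref{SLL1}. First I would reduce to the case of even exponents: for odd $k$, Hölder's inequality gives $\EXP{|P_j(\s,\mathcal X,X)|^k} \le \EXP{|P_j(\s,\mathcal X,X)|^{k+1}}^{k/(k+1)}$, so a bound of the desired shape for even exponents yields the odd case up to adjusting the constant $C$. Next I would also dispose of the trivial range $X \le k\log 2k$ by the pointwise triangle inequality
\begin{align*}
\bigl|P_j(\s,\mathcal X,X)\bigr|
\le \sum_{p\le X}\frac{|a_j(p)|}{p^{\s}}
\le \|a_j\|_{\infty}\sum_{p\le k\log 2k}\frac{1}{p^{\s}}
\ll_{\s,\|a_j\|_\infty}\frac{k^{1-\s}}{(\log 2k)^{\s}},
\end{align*}
using the prime number theorem as in \eqref{eq_FS}, and then raising to the $k$-th power.

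The main case is $X>k\log 2k$, where I would split
$P_j(\s,\mathcal X,X) = P_j^{(1)} + P_j^{(2)}$ with $P_j^{(1)}$ the part with $p\le k\log 2k$ and $P_j^{(2)}$ the tail. The short part $P_j^{(1)}$ is handled by the deterministic bound above, so $\EXP{|P_j^{(1)}|^k}^{1/k}\ll k^{1-\s}/(\log 2k)^{\s}$. For the tail $P_j^{(2)}$, write $k=2n$ (even case) and apply Lemma \ref{SLL}, equation \eqref{SLL2}, with $a(p) = a_j(p)/p^{\s}$:
\begin{align*}
\EXP{|P_j^{(2)}|^{2n}}
\le n!\Bigl(\sum_{k\log 2k<p\le X}\frac{|a_j(p)|^{2}}{p^{2\s}}\Bigr)^{n}
\ll n!\,\Bigl(\frac{(k\log 2k)^{1-2\s}}{\log k}\Bigr)^{n},
\end{align*}
where the prime number theorem and partial summation (since $\s>\tfrac12$) give the tail estimate of $\sum_{p>k\log 2k}p^{-2\s}$. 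Using Stirling on $n!$ with $n=k/2$, the right-hand side simplifies to $\bigl(C_1 k^{1-\s}/(\log 2k)^{\s}\bigr)^{k}$ after elementary manipulation, exactly the desired shape.

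Finally I would combine the two pieces via Minkowski's inequality in $L^{k}(\Omega)$:
\begin{align*}
\EXP{|P_j(\s,\mathcal X,X)|^{k}}^{1/k}
\le \EXP{|P_j^{(1)}|^{k}}^{1/k} + \EXP{|P_j^{(2)}|^{k}}^{1/k}
\ll \frac{k^{1-\s}}{(\log 2k)^{\s}},
\end{align*}
and raise to the $k$-th power to obtain the lemma. The only step that requires any care, and thus is the main (mild) obstacle, is the arithmetic bookkeeping showing that the splitting threshold $X_0 = k\log 2k$ equalizes the two contributions: the bound $(k\log 2k)^{1-2\s}/\log k$ on the tail variance is designed precisely so that, after multiplying by $n!\approx (k/2e)^{k/2}$ and taking the $1/k$-th root, the $k^{1-\s}$ and $(\log 2k)^{-\s}$ powers come out correctly. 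Once one sees this, the estimate falls out directly, in parallel with the deterministic computation already carried out in the proof of Lemma \ref{ESAEG_JEV}.
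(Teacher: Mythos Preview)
Your proposal is correct and follows essentially the same approach as the paper: split at the threshold $k\log 2k$, bound the short piece trivially via the prime number theorem, and bound the tail via \eqref{SLL2} instead of \eqref{SLL1}, exactly as in the derivation of \eqref{MVEP}. The only cosmetic difference is in the reduction to even exponents: the paper applies Cauchy--Schwarz to pass from the $k$-th moment to the $2k$-th moment for \emph{every} $k$ and then invokes the proof of \eqref{MVEP} verbatim, whereas you use H\"older only for odd $k$ and treat even $k$ directly; both variants lead to the same bound.
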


	\begin{proof}
		Using the Cauchy-Schwarz inequality, we have
		\begin{align*}
			\EXP{\l|P_{j}(\s, \mathcal{X}, X)\r|^{k}}
			\leq \l(\EXP{\l|P_{j}(\s, \mathcal{X}, X)\r|^{2k}}\r)^{1/2}.
		\end{align*}
		Using \eqref{SLL2} instead of \eqref{SLL1} in the proof of \eqref{MVEP}, we can show that
		\begin{align*}
			\EXP{\l|P_{j}(\s, \mathcal{X}, X)\r|^{2k}}
			\leq \l( \frac{C k^{1 - \s}}{(\log{2k})^{\s}} \r)^{2k}
		\end{align*}
		for any $X \geq 3$, $k \in \ZZ_{\geq 1}$ which completes the proof.
	\end{proof}

	\begin{lemma} \label{EQEVI0}
		For any $\bm{z} = (z_{1}, \dots, z_{r}) \in \CC^{r}$, we have
		\begin{align*}
			&\EXP{\exp\l( \sum_{j = 1}^{r}z_{j} \Re \frac{a_{j}(p)}{p^{\s}} \mathcal{X}(p) \r)}
			= I_{0}(\sqrt{K_{\bm{a}}(p, \bm{z}) / p^{2\s}}).
		\end{align*}
	\end{lemma}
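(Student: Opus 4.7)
The plan is to compute the expectation directly by unwinding the real parts, reducing to a classical integral representation of $I_{0}$, and then recognizing the resulting product as $K_{\bm{a}}(p,\bm{z})/p^{2\s}$.

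First, I would write the inner sum in a form that separates the dependence on $\mathcal{X}(p)$ and $\overline{\mathcal{X}(p)}$. Using $\Re w = \frac{1}{2}(w + \bar w)$, and keeping in mind that the $z_j$ are complex so cannot be pulled through the real part,
\begin{align*}
\sum_{j=1}^{r} z_{j}\Re \frac{a_{j}(p)}{p^{\s}}\mathcal{X}(p)
= \tfrac{1}{2} A(p,\bm z)\,\mathcal{X}(p) + \tfrac{1}{2} B(p,\bm z)\,\overline{\mathcal{X}(p)},
\end{align*}
where $A(p,\bm z) \ceq \sum_{j=1}^{r} z_{j}\,a_{j}(p)/p^{\s}$ and $B(p,\bm z) \ceq \sum_{j=1}^{r} z_{j}\,\overline{a_{j}(p)}/p^{\s}$. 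Observe that $A(p,\bm z)\,B(p,\bm z) = K_{\bm{a}}(p,\bm{z})/p^{2\s}$ by the definition \eqref{def_K_a}.

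Next, since $\mathcal{X}(p) = e^{i\theta}$ with $\theta$ uniform on $[-\pi,\pi]$, the expectation becomes
\begin{align*}
\EXP{\exp\l( \sum_{j=1}^{r} z_{j}\Re \tfrac{a_{j}(p)}{p^{\s}}\mathcal{X}(p) \r)}
= \frac{1}{2\pi}\int_{-\pi}^{\pi} \exp\l( \tfrac{A}{2}e^{i\theta} + \tfrac{B}{2}e^{-i\theta} \r)d\theta.
\end{align*}
Expanding both exponentials as absolutely convergent power series and applying the orthogonality relation $\frac{1}{2\pi}\int_{-\pi}^{\pi} e^{i(m-n)\theta}d\theta = \delta_{m,n}$, only the diagonal terms survive, giving
\begin{align*}
\sum_{n=0}^{\infty} \frac{1}{n!^{2}}\l(\frac{AB}{4}\r)^{n}
= \sum_{n=0}^{\infty} \frac{(\sqrt{AB}/2)^{2n}}{n!^{2}}
= I_{0}\l(\sqrt{AB}\r).
\end{align*}
Substituting $AB = K_{\bm a}(p,\bm z)/p^{2\s}$ yields the claimed identity.

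The only point requiring a word of care is that $\sqrt{K_{\bm{a}}(p,\bm z)/p^{2\s}}$ is a priori ambiguous for complex argument, but because $I_{0}$ is an even entire function (indeed a power series in $z^{2}$), $I_{0}(\sqrt{w})$ is a well-defined entire function of $w \in \CC$, so no branch choice is needed. There is no real obstacle here; the lemma is essentially the well-known Bessel integral identity $\frac{1}{2\pi}\int_{-\pi}^{\pi}\exp(x e^{i\theta}+y e^{-i\theta})d\theta = I_{0}(2\sqrt{xy})$ applied to the specific $x,y$ arising from the real parts.
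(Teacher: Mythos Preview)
Your proof is correct and follows essentially the same approach as the paper: both separate the exponent into contributions from $\mathcal{X}(p)$ and $\overline{\mathcal{X}(p)}$, expand the exponential(s) as power series, and use the orthogonality $\EXP{\mathcal{X}(p)^{a}} = \delta_{a,0}$ to isolate the diagonal terms, recovering the series for $I_{0}$. The only cosmetic difference is that the paper expands a single exponential and then applies the binomial theorem, whereas you split $\exp(\tfrac{A}{2}e^{i\theta}+\tfrac{B}{2}e^{-i\theta})$ into a product of two exponentials before expanding; the resulting diagonal sums coincide.
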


	\begin{proof}
		By the Taylor expansion of $\exp(\cdot)$ and the identity $\Re w = \frac{w + \ol{w}}{2}$, we can write
		\begin{align*}
			&\EXP{\exp\l( \sum_{j = 1}^{r}z_{j} \Re \frac{a_{j}(p)}{p^{\s}} \mathcal{X}(p) \r)}\\
			&= \sum_{n = 0}^{\infty}\frac{1}{2^{n} p^{n \s} n!}
			\EXP{ \l( \sum_{j = 1}^{r}z_{j}\l( a_{j}(p) \mathcal{X}(p) + \ol{a_{j}(p) \mathcal{X}(p)} \r) \r)^{n}}.
		\end{align*}
		Using the binomial expansion, we find that
		\begin{align*}
			&\l( \sum_{j = 1}^{r}z_{j}\l( a_{j}(p) \mathcal{X}(p) + \ol{a_{j}(p) \mathcal{X}(p)} \r) \r)^{n}
			= \sum_{\ell = 0}^{n}\binom{n}{\ell}
			\l( \sum_{j = 1}^{r}z_{j}a_{j}(p) \r)^{\ell}
			\l( \sum_{k = 1}^{r}z_{k}\ol{a_{k}(p)} \r)^{n - \ell}
			\mathcal{X}(p)^{2\ell - n}.
		\end{align*}
		Since $\mathcal{X}(p)$ is uniformly distributed on the unit circle in $\CC$, we have
		\begin{align} \label{BEQXp}
			\EXP{\mathcal{X}(p)^{a}}
			= \l\{
			\begin{array}{cc}
				1 & \text{if \; $a = 0$,} \\
				0 & \text{otherwise}
			\end{array}
			\r.
		\end{align}
		for any $a \in \ZZ$.
		Hence, we obtain
		\begin{align*}
			\EXP{\exp\l( \sum_{j = 1}^{r}z_{j} \Re \frac{a_{j}(p)}{p^{\s}} \mathcal{X}(p) \r)}
			&= \sum_{n = 0}^{\infty} \frac{1}{2^{2n} p^{2n\s} (2n)!}\binom{2n}{n}
			K_{\bm{a}}(p, \bm{z})^{n}\\
			&= \sum_{n = 0}^{\infty} \frac{1}{(n!)^{2}}\l( \sqrt{K_{\bm{a}}(p, \bm{z}) / p^{2\s}} / 2 \r)^{2n}
			= I_{0}(\sqrt{K_{\bm{a}}(p, \bm{z}) / p^{2\s}}).
		\end{align*}
	\end{proof}

	\begin{proof}[Proof of Proposition \ref{MPJEV}]
		Let $L \geq 1$ be fixed and $T$, $X$, and $Y$ be large numbers such that $X \leq (\log{T})^{L}$ and $Y = \frac{\log{T}}{4 L \log{\log{T}}}$.
		Let $\bm{z} = (z_{1}, \dots, z_{r}) \in \CC^{r}$ with $\|\bm{z}\| \leq b_{1}(\log{T})^{\s}$,
		where $b_{1} = b_{1}(\s, L, \| \bm{a} \|, r)$ is a suitably small constant to be chosen later.
		From \eqref{def_sA_JEV}, we have
		\begin{align*}
			& \frac{1}{T}\int_{\mathcal{A}}\exp\l( \sum_{j = 1}^{r} z_{j} \Re P_{j}(\s + it, X) \r)dt \\
			& = \frac{1}{T}\sum_{0 \leq k \leq Y}\frac{1}{k!}\int_{\mathcal{A}}
			\bigg(\sum_{j = 1}^{r} z_{j} \Re P_{j}(\s + it, X) \bigg)^{k}dt
			+ O\l(\sum_{k > Y}\frac{1}{k!}\l(r \|\bm{z}\| \frac{(\log{T})^{1 - \s}}{\log{\log{T}}} \r)^{k} \r).
		\end{align*}
		From the choice of $Y$ and the bound for $\|\bm z\|$,
		we see that this $O$-term is bounded by $\ll \exp\l( - \log{T} / \log{\log{T}} \r)$.
		Using the Cauchy-Schwarz inequality, we find that
		\begin{align*}
			& \frac{1}{T}\int_{\mathcal{A}}\bigg(\sum_{j = 1}^{r} z_{j} \Re P_{j}(\s + it, X) \bigg)^{k}dt \\
			& = \frac{1}{T}\int_{T}^{2T}\bigg(\sum_{j = 1}^{r} z_{j} \Re P_{j}(\s + it, X) \bigg)^{k}dt    \\
			&\quad+ O\l( \frac{1}{T}(\meas([T, 2T] \setminus \mathcal{A}))^{1/2}
			\l(\int_{T}^{2T} \bigg|\sum_{j = 1}^{r} z_{j} \Re P_{j}(\s + it, X) \bigg|^{2k}dt\r)^{1/2} \r).
		\end{align*}
		By Lemma \ref{ESAEG_JEV}, estimate \eqref{MVEP}, and the bound for $\|\bm{z}\|$, this $O$-term is
		\begin{align*}
			&\ll \exp\l( -\frac{b_{0}}{2} \frac{\log{T}}{\log{\log{T}}} \r)
			\l( C_{1} b_{1} (\log{T})^{\s} \frac{k^{1 - \s}}{(\log{2k})^{\s}}  \r)^{k}\\
			&\ll \exp\l( -\frac{b_{0}}{2} \frac{\log{T}}{\log{\log{T}}} \r)
			\l( 2 C_{1} b_{1} \frac{\log{T}}{\log{\log{T}}}  \r)^{k}
		\end{align*}
		for $0 \leq k \leq Y$, where $C_{1} = C_{1}(\s, \| \bm{a} \|) > 0$ is a positive constant. It follows that
		\begin{align}	\label{GRKLG5}
			& \frac{1}{T}\int_{\mathcal{A}}\exp\l( \sum_{j = 1}^{r} z_{j} \Re P_{{j}}(\s + it, X) \r)dt \\
			& = \frac{1}{T}\sum_{0 \leq k \leq Y}\frac{1}{k!}
			\int_{T}^{2T}\bigg(\sum_{j = 1}^{r} z_{j} \Re P_{{j}}(\s + it, X) \bigg)^{k}dt               \\
			& \quad + O\l(\exp\l( -\frac{b_{0}}{2} \frac{\log{T}}{\log{\log{T}}} \r)
			\sum_{0 \leq k \leq Y}\frac{1}{k!}\l( 2 C_{1} b_{1} \frac{\log{T}}{\log{\log{T}}}  \r)^{k}\r).
		\end{align}
		This $O$-term is
		\begin{align*}
			\ll  \exp\l( -\frac{b_{0}}{2} \frac{\log{T}}{\log{\log{T}}} \r)
			\sum_{k = 0}^{\infty}\frac{1}{k!}\l( 2 C_{1} b_{1} \frac{\log{T}}{\log{\log{T}}}  \r)^{k}
			& = \exp\l( -\l(\frac{b_{0}}{2} - 2C_{1} b_{1}\r) \frac{\log{T}}{\log{\log{T}}} \r).
		\end{align*}
		Hence the $O$-term on the right hand side of \eqref{GRKLG5} is $\ll \exp\l( -\frac{b_{0}}{4}\frac{\log{T}}{\log{\log{T}}} \r)$ when $b_{1}$ is sufficiently small.

		Next, we write
		\begin{align*}
			& \int_{T}^{2T}\bigg(\sum_{j = 1}^{r} z_{j} \Re P_{{j}}(\s + it, X) \bigg)^{k}dt      \\
			& = \sum_{1 \leq j_{1}, \dots, j_{k} \leq r}z_{j_{1}} \cdots z_{j_{k}}
			\sum_{p_{1}, \dots, p_{k} \leq X}\frac{1}{(p_{1} \cdots p_{k})^{\s}}
			\int_{T}^{2T}\l(\Re a_{j_{1}}(p_{1}) p_{1}^{-i t} \r)
			\cdots \l(\Re a_{j_{k}}(p_{k}) p_{k}^{-i t} \r)dt.
		\end{align*}
		From this equation and Lemma \ref{GRLR}, we have
		\begin{align*}
			& \frac{1}{T}\int_{T}^{2T}\bigg(\sum_{j = 1}^{r} z_{j} \Re P_{{j}}(\s+it, X) \bigg)^{k}dt \\
			&= \EXP{\bigg(\sum_{j = 1}^{r} z_{j} \Re P_{{j}}(\s, \mathcal{X}, X) \bigg)^{k}}         \\
			&\quad+O\l( \frac{1}{T}\sum_{1 \leq j_{1}, \dots, j_{k} \leq r}|z_{j_{1}} \cdots z_{j_{k}}|
			\sum_{p_{1}, \dots, p_{k} \leq X}|a_{j_{1}}(p_{1}) \cdots a_{j_{k}}(p_{k})|
			(p_{1} \cdots p_{k})^{1 - \s} \r),
		\end{align*}
		where $P_{j}(\s, \mathcal{X}, X)$ is defined by \eqref{def_RDP}.
		Additionally, we see that this $O$-term is
		\begin{align*}
			\ll \frac{1}{T}\l( \sum_{j = 1}^{r}|z_{j}|\sum_{p \leq X}|a_{j}(p)|p^{1 - \s} \r)^{k}
			\leq \frac{(C \| \bm{z} \| X^2)^{k}}{T}
			\leq \frac{C^{2k}}{T^{1/3}}
			\leq \exp\l( -\frac{\log{T}}{\log{\log{T}}} \r)
		\end{align*}
		for $0 \leq k \leq Y$ when $T$ is sufficiently large.
		Here, $C$ is a positive constant may depend on $\| \bm{a} \|$.
		Therefore, we have
		\begin{align*}
			& \frac{1}{T}\int_{\mathcal{A}}\exp\l( \sum_{j = 1}^{r} z_{j} \Re P_{j}(\s+it, X) \r)dt                     \\
			& = \sum_{0 \leq k \leq Y}\frac{1}{k!}\EXP{\l( \sum_{j = 1}^{r}z_{j}\Re P_{j}(\s, \mathcal{X}, X) \r)^{k}}
			+ O\l( \exp\l( -\frac{b_{0}}{4}\frac{\log{T}}{\log{\log{T}}} \r) \r)                                                                      \\
			& = \EXP{\exp\l( \sum_{j = 1}^{r}z_{j} \Re P_{{j}}(\s, \mathcal{X}, X) \r)}
			- \sum_{k > Y}\frac{1}{k!}\EXP{\l( \sum_{j = 1}^{r}z_{j}\Re P_{{j}}(\s, \mathcal{X}, X) \r)^{k}}              \\
			& \quad + O\l( \exp\l( -\frac{b_{0}}{4}\frac{\log{T}}{\log{\log{T}}} \r) \r).
		\end{align*}
		Using Lemma \ref{MVERDP} and the bound for $\| \bm{z} \|$, we obtain
		\begin{align}
			\EXP{\l| \sum_{j = 1}^{r}z_{j}\Re P_{{j}}(\s, \mathcal{X}, X) \r|^{k}}
			\leq b_{1}^{k} (\log{T})^{\s k} \l(\frac{C k^{1-\s}}{(\log{2k})^{\s}}\r)^{k}
		\end{align}
		for some constant $C = C(\bm{a}, \s) > 0$.
		Hence it holds that
		\begin{align*}
			\l|\sum_{k > Y}\frac{1}{k!}\EXP{\l( \sum_{j = 1}^{r}z_{j}\Re P_{{j}}(\s, \mathcal{X}, X) \r)^{k}}\r|
			&\leq \sum_{k > Y}\frac{\l(C b_{1} (\log{T})^{\s}\r)^{k}}{(k \log{k})^{\s k}}
			\leq \sum_{k > Y}e^{-k}\\
			&\ll \exp\l( - \frac{\log{T}}{4L \log{\log{T}}} \r).
		\end{align*}
		We find from these that for any $\|\bm{z}\| \leq b_{1}(\log{T})^{\s}$ with $b = b_{1}(\s, L, \| \bm{a} \|, r) > 0$ sufficiently small
		\begin{align} \label{GRKLG6}
			&\frac{1}{T}\int_{\mathcal{A}}\exp\l( \sum_{j = 1}^{r} z_{j} \Re P_{j}(\s+it, X) \r)dt\\
			&= \EXP{\exp\l( \sum_{j = 1}^{r}z_{j} \Re P_{j}(\s, \mathcal{X}, X) \r)}
			+ O\l( \exp\l(- b \frac{\log{T}}{\log{\log{T}}} \r)\r)\\
			&= \prod_{p \leq X}\EXP{\exp\l( \sum_{j = 1}^{r}z_{j} \Re \frac{a_{j}(p)}{p^{\s}}\mathcal{X}(p) \r)}
			+ O\l( \exp\l(- b \frac{\log{T}}{\log{\log{T}}} \r)\r)
		\end{align}
		by using the independence of $\mathcal{X}(p)$.
		By this equation and Lemma \ref{EQEVI0}, we complete the proof of Proposition \ref{MPJEV}.
	\end{proof}

\section{\textbf{Distribution of Dirichlet polynomials in the strip $\frac{1}{2} < \s < 1$}}

	Let $\bm \chi=(\chi_1, \dots, \chi_r)$ be an $r$-tuple of Dirichlet characters, and $\bm{\theta} \in \RR^{r}$.
	For $\bm{x} = (x_{1}, \dots, x_{r}) \in \RR^{r}$ define
	\begin{gather}
		\label{def_K_chi}
		K_{\bm{\chi}, \bm{\theta}}(n, \bm{x})
		\ceq \bigg|\sum_{j=1}^{r}x_{j} e^{-i\theta_{j}}\chi_{j}(n)\bigg|^{2},\\
		\label{def_F_chi}
		F_{\bm{\chi}, \bm{\theta}, \s}(\bm{x})
		\ceq \frac{1}{\phi(d)}\sum_{u \in (\ZZ / d \ZZ)^{\times}}K_{\bm{\chi}, \bm{\theta}}(u, \bm{x})^{\frac{1}{2\s}}.
	\end{gather}

	We evaluate the main term in Proposition \ref{MPJEV} for $K_{\bm{\chi}, \bm{\theta}}(p, \bm{x})$ in this section.

	\begin{proposition}	\label{AFIQ}
		Under the notation above, for any $X \geq 30$, $3 \leq x_{j} \leq X^{\frac{2\s}{3}}$, we have
		\begin{align*}
			\prod_{p \leq X}I_{0}\l( \sqrt{K_{\bm{\chi}, \bm{\theta}}(p, \bm{x}) / p^{2\s}} \r)
			= \exp\l(\frac{G(\s)}{\log{\| \bm{x} \|}}
			\l( F_{\bm{\chi}, \bm{\theta}, \s}(\bm{x})
			+ O_{\s, \bm{\chi}}\l( \frac{\| \bm{x} \|^{\frac{1}{\s}}\log{\log{\| \bm{x} \|}}}{\log{\| \bm{x} \|}} \r) \r) \r),
		\end{align*}
		where $G(\s)$ is defined in \eqref{def_G_s}.
	\end{proposition}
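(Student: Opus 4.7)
The plan is to take the logarithm of the product and study
\begin{align*}
S(\bm{x}) \ceq \sum_{p \leq X} \log I_{0}\bigl(\sqrt{K_{\bm{\chi},\bm{\theta}}(p, \bm{x})}/p^{\sigma}\bigr).
\end{align*}
Since $K_{\bm{\chi},\bm{\theta}}(p, \bm{x})$ depends on $p$ only via $p \bmod d$, and the finitely many primes dividing $d$ contribute $O_{\bm{\chi}}(1)$ in total, I partition by residue class:
\begin{align*}
S(\bm{x}) = \sum_{u \in (\ZZ/d\ZZ)^{\times}} \sum_{\substack{p \leq X \\ p \equiv u \pmod{d}}} \log I_{0}(a_{u}/p^{\sigma}) + O_{\bm{\chi}}(1),
\end{align*}
where $a_{u} \ceq \sqrt{K_{\bm{\chi},\bm{\theta}}(u, \bm{x})} \leq r\|\bm{x}\|$; classes with $a_{u} = 0$ drop out.

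For each class with $a_{u} > 0$, I invoke the Siegel--Walfisz theorem (with strong error since $d$ is fixed by $\bm{\chi}$) together with Abel summation to replace the inner sum by
\begin{align*}
\frac{1}{\phi(d)}\int_{2}^{X} \frac{\log I_{0}(a_{u}/t^{\sigma})}{\log t}\,dt + (\text{negligible PNT error}).
\end{align*}
The substitution $v = a_{u}/t^{\sigma}$, giving $\log t = \sigma^{-1}(\log a_{u} - \log v)$ and $dt = -\sigma^{-1} a_{u}^{1/\sigma} v^{-1 - 1/\sigma}\,dv$, transforms this into
\begin{align*}
\frac{a_{u}^{1/\sigma}}{\phi(d)}\int_{a_{u}/X^{\sigma}}^{a_{u}/2^{\sigma}} \frac{\log I_{0}(v)}{v^{1 + 1/\sigma}(\log a_{u} - \log v)}\,dv.
\end{align*}
The assumption $x_{j} \leq X^{2\sigma/3}$ forces $a_{u}/X^{\sigma} \to 0$ and $a_{u}/2^{\sigma} \to \infty$ as $\|\bm{x}\|\to\infty$, and the two tails contribute negligibly thanks to the bounds $\log I_{0}(v)/v^{1+1/\sigma} = O(v^{1-1/\sigma})$ near $0$ and $O(v^{-1/\sigma})$ at infinity, both integrable precisely because $\tfrac{1}{2} < \sigma < 1$.

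To extract the main term I expand
\begin{align*}
\frac{1}{\log a_{u} - \log v} = \frac{1}{\log a_{u}}\Bigl(1 + O\Bigl(\frac{|\log v|}{\log a_{u}}\Bigr)\Bigr)
\end{align*}
on $|\log v| \leq \tfrac{1}{2}\log a_{u}$ (which contains essentially all the mass of $\log I_{0}(v)/v^{1+1/\sigma}$) and extend the resulting main integral to $(0,\infty)$, recognizing it as $G(\sigma)$. For classes with $a_{u}$ not too small this yields
\begin{align*}
\frac{G(\sigma) a_{u}^{1/\sigma}}{\phi(d) \log a_{u}}\Bigl(1 + O_{\sigma}\Bigl(\frac{\log\log a_{u}}{\log a_{u}}\Bigr)\Bigr).
\end{align*}
For classes with $a_{u} \leq \|\bm{x}\|^{1/2}$ the crude bound $\log I_{0}(z) \leq z$ shows that the total contribution is already absorbed into the stated error, so I may restrict attention to the remaining classes, where $\log a_{u} = \log\|\bm{x}\| + O_{\bm{\chi}}(1)$. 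Summing over all $u$ and invoking $F_{\bm{\chi},\bm{\theta},\sigma}(\bm{x}) = \phi(d)^{-1}\sum_{u} a_{u}^{1/\sigma}$ produces
\begin{align*}
S(\bm{x}) = \frac{G(\sigma)}{\log\|\bm{x}\|}\Bigl(F_{\bm{\chi},\bm{\theta},\sigma}(\bm{x}) + O_{\sigma,\bm{\chi}}\Bigl(\frac{\|\bm{x}\|^{1/\sigma}\log\log\|\bm{x}\|}{\log\|\bm{x}\|}\Bigr)\Bigr),
\end{align*}
which is the claim after exponentiating.

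The principal obstacle is enforcing uniform control of the error across residue classes whose sizes $a_{u}$ may differ drastically: the $1/\log a_{u}$ prefactor degenerates for small $a_{u}$, and this must be reconciled with the uniform bound in the proposition. The dichotomy between small and large $a_{u}$ used above is the key technical step. A secondary subtlety is that the $\log\log/\log$ error rate arises delicately from the $|\log v|/\log a_{u}$ correction in the expansion, and one has to verify that truncating the $v$-integral at the endpoints does not degrade this rate—this relies crucially on the restriction $x_{j} \leq X^{2\sigma/3}$.
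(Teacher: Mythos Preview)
Your approach is the paper's: take logarithms, split by residue classes modulo $d$, apply the prime number theorem in arithmetic progressions with partial summation, substitute $v = a_u/t^\sigma$, and expand $1/(\log a_u - \log v)$ about $1/\log a_u$ to extract $G(\sigma)$. The paper packages the per-class computation into a separate lemma (Lemma~\ref{KLAFIQ}), but the content is what you wrote.

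Two slips to repair. First, the primes $p\mid d$ do \emph{not} contribute $O_{\bm\chi}(1)$: for such $p$ one can have $\chi_j(p)\neq 0$ whenever $p\nmid q_j$, so $K_{\bm\chi,\bm\theta}(p,\bm x)$ may be of size $\|\bm x\|^2$, and the bound $\log I_0(z)\le z$ only gives $O_{\bm\chi}(\|\bm x\|)$. This is still absorbed by the stated error since $1/\sigma>1$, so it is harmless once corrected.

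Second, your assertion ``for the remaining classes $\log a_u=\log\|\bm x\|+O_{\bm\chi}(1)$'' is false with the threshold $a_u>\|\bm x\|^{1/2}$: nothing forbids $a_u\asymp\|\bm x\|^{3/4}$, say, and then $\log a_u=\tfrac34\log\|\bm x\|$. As written, replacing $1/\log a_u$ by $1/\log\|\bm x\|$ would cost $\asymp\|\bm x\|^{1/\sigma}/\log\|\bm x\|$ when $a_u$ is near $\|\bm x\|$, which exceeds the target error. The paper cures this by placing the threshold at $a_u=\|\bm x\|/\log\|\bm x\|$: below it $a_u^{1/\sigma}/\log a_u\ll\|\bm x\|^{1/\sigma}/(\log\|\bm x\|)^2$ outright, and above it $\log a_u=\log\|\bm x\|+O(\log\log\|\bm x\|)$, which is precisely the origin of the $\log\log\|\bm x\|$ in the final error. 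Alternatively one can keep your threshold but split the surviving range at $\|\bm x\|/\log\|\bm x\|$ and use $a_u^{1/\sigma}\le(\|\bm x\|/\log\|\bm x\|)^{1/\sigma}$ on the lower piece; either way the justification you gave needs to be replaced.
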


	\begin{lemma}	\label{KLAFIQ}
		Let $d, u \in \mathbb{Z}\setminus\{0\}$ with $(d, u)=1$.
		Let $ \frac{1}{2} < \s < 1$ be fixed.
		For $X \geq 3$ and $0 \leq x \leq X^{\frac{2\s}{3}}$, we have
		\begin{align*}
			\sum_{\substack{p \leq X \\ p\equiv u \pmod d }}\log{I_{0}\l( \frac{x}{p^{\s}} \r)}
			= \frac{G(\s) x^{\frac{1}{\s}}}{\phi(d)\log{(x + 2)}}\l( 1 + O_{d, \sigma}\l( \frac{1}{\log{(x + 2)}} \r) \r).
		\end{align*}
	\end{lemma}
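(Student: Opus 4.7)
The strategy is to convert the sum $B(x, X; u, d)$ to an integral via Abel summation and the prime number theorem in arithmetic progressions, then to perform a substitution that reveals $G(\s)$, and finally to estimate the resulting error integral.

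First, write
\begin{align*}
	B(x, X; u, d) = \int_{2^-}^{X}\log I_{0}(x/y^{\s})\,d\pi(y; d, u),
\end{align*}
and apply Abel summation using the Siegel--Walfisz asymptotic $\pi(y; d, u) = \li(y)/\phi(d) + O_{d, A}(y/\log^{A}y)$. To control the contribution of $R(y) := \pi(y; d, u) - \li(y)/\phi(d)$, use the bound $\frac{d}{dy}\log I_{0}(x/y^{\s}) \ll \s x y^{-\s-1}\min(x/y^{\s}, 1)$ (coming from $I_{1}/I_{0} \ll \min(w, 1)$) and split the $y$-range at $x^{1/\s}$; this yields an error of size $O_{d, A, \s}(x^{1/\s}/\log^{A}(x + 2))$ for any fixed $A$. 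Moreover, the tail $\int_{X}^{\infty}$ of the main integral is negligible because $x/y^{\s} \leq X^{-\s/3}$ for $y \geq X$, so $\log I_{0}(x/y^{\s}) \ll (x/y^{\s})^{2}$ gives a tail of order $O(x^{2}/X^{2\s-1})$, which is dominated by $x^{1/\s}/\log^{2}(x+2)$ since $\s > \tfrac{1}{2}$ and $x \leq X^{2\s/3}$. This reduces matters to evaluating $\frac{1}{\phi(d)} \int_{2}^{\infty}\log I_{0}(x/y^{\s}) \frac{dy}{\log y}$.

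Next, the substitution $w = x/y^{\s}$, for which $dy = -\frac{x^{1/\s}}{\s} w^{-1-1/\s}\,dw$ and $\log y = \frac{1}{\s}(\log x - \log w)$, transforms this integral into
\begin{align*}
	\frac{x^{1/\s}}{\phi(d)}\int_{0}^{x/2^{\s}}\frac{\log I_{0}(w)}{\log x - \log w}\,\frac{dw}{w^{1 + 1/\s}}.
\end{align*}
Apply the identity $\frac{1}{\log x - \log w} = \frac{1}{\log x} + \frac{\log w}{(\log x)(\log x - \log w)}$ to split into a main piece and an error piece. For the main piece, extending the domain to $(0, \infty)$ produces $\frac{G(\s) x^{1/\s}}{\phi(d)\log x}$ by \eqref{def_G_s}; the discarded tail from $x/2^{\s}$ to $\infty$ contributes only $O(x^{1 - 1/\s})$ to the $w$-integral, giving an overall contribution of $O(x/\log x)$ that is absorbed into the claimed error since $1/\s > 1$.

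Finally, the error piece $\int_{0}^{x/2^{\s}}\frac{\log w}{\log x - \log w}\cdot\frac{\log I_{0}(w)}{w^{1 + 1/\s}}\,dw$ is handled by splitting the range at $w = \sqrt{x}$. On $[0, \sqrt{x}]$, the denominator satisfies $\log x - \log w \geq \tfrac{1}{2}\log x$, and after extracting $\frac{1}{\log x}$ what remains is an absolutely convergent integral bounded by an $O_{\s}(1)$ constant using $\log I_{0}(w) \ll \min(w^{2}, w)$, so this contributes $O(1/\log x)$ as desired. On $[\sqrt{x}, x/2^{\s}]$, use $\log I_{0}(w) \ll w$ and $\log(x/w) \geq \s\log 2$ to bound the integral by $O(x^{1 - 1/\s})$, which is again negligible after multiplication by $x^{1/\s}/\log x$. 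Combining all pieces yields the claim. The most delicate step is the transitional region $w \asymp x$, where neither the quadratic nor the linear asymptotic of $\log I_{0}$ gives sharp savings and the logarithmic denominator ceases to dominate; this is precisely what dictates the hypothesis $x \leq X^{2\s/3}$, keeping $w$ safely away from $x$ at the upper endpoint of integration.
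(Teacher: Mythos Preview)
Your approach is correct and follows essentially the same route as the paper: partial summation with the prime number theorem in arithmetic progressions, the substitution $w = x/y^{\s}$, and the expansion of $1/(\log x - \log w)$ about $1/\log x$. The paper organizes things slightly differently, truncating the sum to $x^{1/(2\s)} \leq p \leq x^{3/(2\s)}$ \emph{before} partial summation so that $w$ lies in $[x^{-1/2}, x^{1/2}]$ and the approximation $1/\log(x/w) = 1/\log x + O(|\log w|/\log^{2}x)$ is uniform, whereas you apply partial summation on the full range and split the error integral at $w=\sqrt{x}$ afterward; both orderings work.

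Two small slips are worth noting. First, bounded $x$ should be handled separately (the paper treats $x \leq 2$ via $\log I_{0}(w) \ll w^{2}$), since your identity for $1/(\log x - \log w)$ degenerates near $x=1$. Second, on $[\sqrt{x}, x/2^{\s}]$ your stated bound $O(x^{1-1/\s})$ is too optimistic: because $1 - 1/\s < 0$, the integral $\int_{\sqrt{x}}^{x/2^{\s}} w^{-1/\s}\,dw$ is dominated by the \emph{lower} endpoint and is $\asymp x^{(1-1/\s)/2}$, and the factor $\log w \asymp \log x$ cannot be dropped. The resulting contribution is $O(x^{(1+1/\s)/2})$, which is still $o(x^{1/\s}/\log^{2}x)$ for $\s<1$, so your conclusion is unaffected. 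Finally, your closing remark slightly misidentifies the role of the hypothesis $x \leq X^{2\s/3}$: it controls the tail $y > X$, not the region $w \asymp x$, which is already bounded away from the singularity by $\log(x/w) \geq \s\log 2$.
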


	\begin{proof}
		Using the Taylor expansion of $I_{0}$ we find that for $0 \leq x \leq 2$
		\begin{align*}
			\sum_{\substack{p \leq X \\ p\equiv u \pmod d }}\log{I_{0}\l( \frac{x}{p^{\s}} \r)}
			\ll \sum_{p \leq X} \frac{x^2}{p^{2\s}}
			\ll_{\s} x^2
			\ll \frac{x^{\frac{1}{\s}}}{(\log(x + 2))^2},
		\end{align*}
		which proves the lemma when $0 \leq x \leq 2$.
		From now on, we assume that $x \geq 2$. Let $y_0, y_1$ be some parameters to be chosen later.
		We split the range of $p\leq X$ into three ranges
		\begin{align*}
			\sum_{\substack{p \leq X \\ p\equiv u \pmod d }}\log{I_{0}\l( \frac{x}{p^{\s}} \r)}
			& = \l(\sum_{\substack{p \leq y_{0}           \\ p\equiv u\pmod d}} + \sum_{\substack{y_{1} < p \leq X \\ p\equiv u\pmod d}}
			+ \sum_{\substack{y_{0} < p \leq y_{1}         \\ p\equiv u\pmod d}}\r)\log{I_{0}\l( \frac{x}{p^{\s}} \r)}
			\eqc S_{1}+ S_{2} + S_{3}.
		\end{align*}
		From the prime number theorem in arithmetic progressions we have
		\begin{align}\label{BFQC}
			A(y;u,d)
			\ceq \sum_{\substack{p \leq y \\ p\equiv u\pmod d}}1
			= \frac{\li(y)}{\phi(d)} + O_{d}\l( y\exp\l(-c \sqrt{\log{y}}\r) \r)
		\end{align}
		where $\li(y) \ceq \int_{2}^{y}\frac{du}{\log{u}}$.
		By partial summation, we find that
		\begin{align}
			\label{EqSpm1}
			S_{3}
			= & -\int_{y_{0}}^{y_{1}}A(\xi;u,d)\l( \frac{d}{d\xi}\log{I_{0}\l( \frac{x}{\xi^{\s}} \r)} \r)d\xi
			+ A(y_{1};u,d)\log{I_{0}\l( \frac{x}{y_{1}^{\s}} \r)}                                              \\
			& -A(y_{0};u,d)\log{I_{0}\l( \frac{x}{y_{0}^{\s}} \r)}.
		\end{align}
		By  equation \eqref{BFQC}, the integral on the right hand side is equal to
		\begin{align*}
			-\frac{1}{\phi(d)}\int_{y_{0}}^{y_{1}}\li(\xi)\l( \frac{d}{d\xi}\log{I_{0}\l( \frac{x}{\xi^{\s}} \r)} \r)d\xi
			+ O_{d}\l( \int_{y_{0}}^{y_{1}}\xi e^{-c\sqrt{\log{\xi}}}
			\l( \frac{d}{d\xi}\log{I_{0}\l( \frac{x}{\xi^{\s}} \r)} \r)d\xi \r).
		\end{align*}
		Note that we used the monotonicity of $I_{0}$ in the above deformation.
		We also have
		\begin{align*}
			& -\int_{y_{0}}^{y_{1}}\li(\xi)\l( \frac{d}{d\xi}\log{I_{0}\l( \frac{x}{\xi^{\s}} \r)} \r)d\xi \\
			& = -\li(y_{1})\log{I_{0}\l( \frac{x}{y_{1}^{\s}} \r)}
			+ \li(y_{0})\log{I_{0}\l( \frac{x}{y_{0}^{\s}} \r)}
			+ \int_{y_{0}}^{y_{1}}\frac{\log{I_{0}\l( \frac{x}{\xi^{\s}} \r)}}{\log{\xi}}d\xi,
		\end{align*}
		and
		\begin{align*}
			& \int_{y_{0}}^{y_{1}}\xi e^{-c\sqrt{\log{\xi}}}
			\l( \frac{d}{d\xi}\log{I_{0}\l( \frac{x}{\xi^{\s}} \r)} \r)d\xi                                            \\
			& \ll y_{1}e^{-c\sqrt{\log{y_{1}}}}\log{I_{0}\l( \frac{x}{y_{1}^{\s}} \r)}
			+ y_{0}e^{-c\sqrt{\log{y_{0}}}}\log{I_{0}\l( \frac{x}{y_{0}^{\s}} \r)}
			+ \int_{y_{0}}^{y_{1}}e^{-c\sqrt{\log{\xi}}}\log{I_{0}\l( \frac{x}{\xi^{\s}} \r)}d\xi \\
			& \ll x^{2}y_{1}^{1-2\s}e^{-c\sqrt{\log{y_{1}}}} + x y_{0}^{1-\s}e^{-c\sqrt{\log{y_{0}}}}.
		\end{align*}
		Substituting the above estimates to \eqref{EqSpm1} and using \eqref{BFQC}, we obtain
		\begin{align*}
			S_{3}
			= & \frac{1}{\phi(d)}\int_{y_{0}}^{y_{1}}\frac{\log{I_{0}\l( \frac{x}{\xi^{\s}} \r)}}{\log{\xi}}d\xi
			+ O_{d}\l(x^{2}y_{1}^{1-2\s}e^{-c\sqrt{\log{y_{1}}}} + x y_{0}^{1-\s}e^{-c\sqrt{\log{y_{0}}}}\r).
		\end{align*}
		By making the change of variables $u = \frac{x}{\xi^{\s}}$, we have
		\begin{align*}
			\int_{y_{0}}^{y_{1}}\frac{\log{I_{0}\l( \frac{x}{\xi^{\s}} \r)}}{\log{\xi}}d\xi
			= x^{\frac{1}{\s}}\int_{x/y_{1}^{\s}}^{x/y_{0}^{\s}}
			\frac{\log{I_{0}(u)}}{u^{1 + \frac{1}{\s}}\log{(x / u)}}du.
		\end{align*}
		We have that
		$
			\frac{1}{\log(x / u)}
			= \frac{1}{\log{x}} + O\l( \frac{|\log{u}|}{(\log{x})^{2}} \r)
		$
		for $x^{-1/2} \leq u \leq x^{1/2}$.
		Therefore, by choosing $y_{0} = x^{\frac{1}{2\s}}$, $y_{1} = x^{\frac{3}{2\s}}$ we find that the right hand side above is equal to
		\begin{align}	\label{KLAFIQ1}
			\frac{x^{\frac{1}{\s}}}{\log{x}}\int_{x/y_{1}^{\s}}^{x/y_{0}^{\s}}
			\frac{\log{I_{0}(u)}}{u^{1 + \frac{1}{\s}}}du
			+ O\l( \frac{x^{\frac{1}{\s}}}{(\log{x})^{2}}\int_{x/y_{1}^{\s}}^{x/y_{0}^{\s}}
			\frac{\log{I_{0}(u)} |\log{u}|}{u^{1 + \frac{1}{\s}}}du \r).
		\end{align}
		Moreover, we find that the main term of \eqref{KLAFIQ1} is equal to
		\begin{align*}
			& \frac{x^{\frac{1}{\s}}}{\log{x}}\int_{0}^{\infty}\frac{\log{I_{0}(u)}}{u^{1 + \frac{1}{\s}}}du
			+ O_{\s}\l( \frac{x^{1/\s}}{\log{x}}\l( \Big(\frac{x}{ y_{1}^\sigma}\Big)^{\frac{2\s-1}{\s}} + \Big(\frac{x}{ y_{0}^\sigma}\Big)^{\frac{\s - 1}{\s}} \r) \r)
			= \frac{G(\s)x^{\frac{1}{\s}}}{\log{x}} + O_{\s}\l( \frac{x^{\frac{1}{\s}}}{(\log{x})^{2}} \r),
		\end{align*}
		and that the $O$-term of \eqref{KLAFIQ1} is
		\begin{align*}
			\ll \frac{x^{\frac{1}{\s}}}{(\log{x})^{2}}\int_{0}^{\infty}
			\frac{\log{I_{0}(u)} |\log{u}|}{u^{1 + \frac{1}{\s}}}du
			\ll_{\s} \frac{x^{\frac{1}{\s}}}{(\log{x})^{2}}.
		\end{align*}
		Hence, we have
		\begin{align*}
			S_{3}
			= \frac{G(\s) x^{\frac{1}{\s}}}{\phi(d)\log{x}}\l( 1 + O_{d, \s}\l( \frac{1}{\log{x}} \r) \r).
		\end{align*}
		We also find that
		\begin{align*}
			S_{1}
			\leq \sum_{p \leq y_{0}}\frac{x}{p^{\s}}
			\ll x^{\frac{1+\sigma}{2\sigma}} \ll_{\s} \frac{x^{\frac{1}{\s}}}{(\log{x})^2}
		\end{align*}
		by the inequality $I_{0}(x / p^{\s}) \leq \exp(x / p^{\s})$,
		and that
		\begin{align*}
			S_{2}
			\ll \sum_{p > y_{1}}\frac{x^2}{p^{2\s}}
			\ll_{\s} \frac{x^{2}}{y_{1}^{2\s - 1}}
			\ll_{\s} \frac{x^{\frac{1}{\s}}}{(\log{x})^2}
		\end{align*}
		by using the Taylor expansion of $I_{0}$.
		Thus, we obtain Lemma \ref{KLAFIQ}.
	\end{proof}

	\begin{proof}[Proof of Proposition \ref{AFIQ}]
		Let $\bm x=(x_1, \dots, x_r) \in (\RR_{\geq 3})^r$, and $\bm{\theta} \in \RR^{r}$.
		When $\chi_j$ is a Dirichlet character modulo $q_{j}$,
		we take $d=\operatorname{lcm}(q_1, \dots, q_{r})$ and split the sum over $p$ into residue classes modulo $d$ to obtain
		\begin{align}
			&\sum_{p\leq X}\log I_{0}\l(\sqrt{K_{\bm{\chi}, \bm{\theta}}(p, \bm x)/p^{2\sigma}}\r)\\
			&= \sum_{u \in (\ZZ / d \ZZ)^{\times}} \sum_{\substack{p \leq X \\ p\equiv u\pmod d}}
			\log I_{0}\l(\sqrt{K_{\bm{\chi}, \bm{\theta}}(u, \bm x)/p^{2\sigma}}\r)
			+ \sum_{\substack{p \leq X \\ p \mid d}}\log{I_{0}\l(\sqrt{K_{\bm{\chi}, \bm{\theta}}(p, \bm x)/p^{2\sigma}}\r)}.
		\end{align}
		Since $|\log{I_{0}(x)}| \leq x$ and $|K_{\bm{\chi}, \bm{\theta}}(p, \bm{x})| \leq \| \bm{x} \|^2$ it holds that
		\begin{align}
			\sum_{p \mid d} \log{I_{0}\l( \sqrt{K_{\bm{\chi}, \bm{\theta}}(u, \bm{x})/p^{2\sigma}} \r)}
			\leq \sum_{p \mid d} \frac{1}{p^{\s}}\| \bm{x} \|
			\ll_{\bm{\chi}} \| \bm{x} \|.
		\end{align}
		By Lemma \ref{KLAFIQ}, we have
		\begin{align}
			&\sum_{\substack{p \leq X \\ p\equiv u\pmod d}}
			\log I_{0}\l(\sqrt{K_{\bm{\chi}, \bm{\theta}}(u, \bm x)/p^{2\sigma}}\r)\\
			&= \frac{G(\s) K_{\bm{\chi}, \bm{\theta}}(u, \bm{x})^{\frac{1}{2\s}}}{\phi(d) \log(K_{\bm{\chi}, \bm{\theta}}(u, \bm{x})^{1/2} + 2)}
			\l( 1 + O_{\s, \bm{\chi}}\l( \frac{1}{\log(K_{\bm{\chi}, \bm{\theta}}(u, \bm{x})^{1/2} + 2)} \r) \r).\label{Gsigma}
		\end{align}
		If $K_{\bm{\chi}, \bm{\theta}}(u, \bm{x}) \geq \frac{\| \bm{x} \|^2}{(\log{\| \bm{x} \|})^{2}}$, then \eqref{Gsigma} becomes \begin{align*}
			\frac{G(\s)}{\phi(d) \log{\| \bm{x} \|}}
			\l( K_{\bm{\chi}, \bm{\theta}}(u, \bm{x})^{\frac{1}{2\s}}
			+ O_{\s, \bm{\chi}}\l( \frac{\| \bm{x} \|^{\frac{1}{\s}}\log{\log{\| \bm{x} \|}}}{\log{\| \bm{x} \|}} \r) \r)
		\end{align*} by using the trivial bound $K_{\bm{\chi}, \bm{\theta}}(u, \bm{x}) \leq \| \bm{x} \|^{2}$.
		If $K_{\bm{\chi}, \bm{\theta}}(u, \bm{x}) \leq \frac{\| \bm{x} \|^2}{(\log{\| \bm{x} \|})^{2}}$, then we have \eqref{Gsigma} is
		$
			\ll_{\s, \bm{\chi}} \frac{\| \bm{x} \|^{\frac{1}{\s}}}{(\log{\| \bm{x} \|})^2}.
		$
		Hence, we obtain
		\begin{align*}
			&\sum_{p\leq X}\log I_{0}\l(\sqrt{K_{\bm{\chi}, \bm{\theta}}(p, \bm x)/p^{2\sigma}}\r)\\
			&= \frac{G(\s)}{\phi(d) \log{\|\bm{x}\|}}\sum_{u \in (\ZZ / d \ZZ)^{\times}}
			\l( K_{\bm{\chi}, \bm{\theta}}(u, \bm{x})^{\frac{1}{2\s}}
			+ O_{\s, \bm{\chi}}\l( \frac{\| \bm{x} \|^{\frac{1}{\s}}\log{\log{\| \bm{x} \|}}}{\log{\| \bm{x} \|}} \r) \r)
			+ O_{\bm{\chi}}\l( \| \bm{x} \| \r)\\
			&= \frac{G(\s)}{\log{\|\bm{x}\|}}
			\l( F_{\bm{\chi}, \bm{\theta}, \s}(\bm{x})
			+ O_{\s, \bm{\chi}}\l( \frac{\| \bm{x} \|^{\frac{1}{\s}}\log{\log{\| \bm{x} \|}}}{\log{\| \bm{x} \|}} \r) \r),
		\end{align*}
		which completes the proof of Proposition \ref{AFIQ}.
	\end{proof}

	Now, we are ready to prove Proposition \ref{Main_Prop_JEV}.

	\begin{proof}[Proof of Proposition \ref{Main_Prop_JEV}]
		Let $\bm{\alpha} \in (\RR_{> 0})^{r}$ such that
		$\Xi_{1}(\s, \bm{\chi}, \bm{\theta}; \bm{\a}), \dots, \Xi_{r}(\s, \bm{\chi}, \bm{\theta}; \bm{\a}) > 0$.
		Let $L \geq 2$.
		Let $T$ be large, and $X = (\log{T})^{L}$.
		Let $V$ be large with $V \leq \frac{a_{2} (\log{T})^{1-\s}}{\log{\log{T}}}$,
		where $a_{2} = a_{2}(\s, L, \bm{\a})$ is a suitably positive constant to be chosen later.
		Here, we let the positive parameters $x_{1}, \dots, x_{r}$ to be the solutions of the equations
		\begin{align}	\label{def_xs}
			V
			&= \frac{G(\s)}{\s \log{(x_{1} / \a_{1})}} (x_{1} / \a_{1})^{\frac{1}{\s} - 1}
			= \cdots
			= \frac{G(\s)}{\s \log{(x_{r} / \a_{r})}} (x_{r} / \a_{r})^{\frac{1}{\s} - 1}.
		\end{align}
		When $V$ is sufficiently large, we can verify that $x_{i} / \a_{i} = x_{j} / \a_{j}$ for all $i, j = 1, \dots, r$,
		and that
		\begin{gather}
			\label{DPJEV4}
			\frac{x_{j}}{\a_{j}}
			= \frac{A(\s)}{1 - \s}\l( V \log{V} \r)^{\frac{\s}{1-\s}}
			\l( 1 + O_{\s, \bm{\chi}, \bm{\a}}\l( \frac{\log{\log{V}}}{\log{V}} \r) \r).
		\end{gather}
		To use Propositions \ref{MPJEV}, \ref{AFIQ}, we choose $a_{2} = a_{2}(\s, L, \bm{\a})$
		such that $\| \bm{x} \| \leq \frac{b_{1}}{2}(\log{T})^{\s}$,
		where $b_{1} = b_{1}(\s, L, r)$ is the same constant as in Proposition \ref{MPJEV}
		with $a_{j}(p)=\chi_{j}(p)e^{-i\theta_{j}}$.

		For any Lebesgue measurable set $S \subset [T, 2T]$ and $\bm{v} = (v_{1}, \dots, v_{r}) \in \RR^{r}$, denote
		\begin{align} \label{def_Psi_S}
			&\tilde\Psi_{S}(T, \bm{v}, X)\\
			&\ceq \frac{1}{T}\meas \set{t \in S}{\Re\sum_{p \leq X}\frac{\chi_j(p)e^{-i\theta_j}}
			{p^{\s + it}} > \Xi_{j}(\s, \bm{\chi}, \bm{\theta}; \bm{\a})v_{j} \text{\; for all $j=1, \dots, r$}}.
		\end{align}
		Let $y_{j} = x_{j} \Xi_{j}(\s, \bm{\chi}; \bm{\a})$. Then we find that
		\begin{align}	\label{KEJEV}
			&y_{1} \cdots y_{r} \int_{-\infty}^{\infty} \cdots \int_{-\infty}^{\infty}e^{y_{1} v_{1} + \cdots + y_{r} v_{r}}
			\tilde\Psi_{\mathcal{A}}(T, (v_{1}, \dots, v_{r}), X)d v_{1} \cdots d v_{r}\\
			& =\frac{1}{T} \int_{\mathcal{A}}
			\exp\l( \sum_{j = 1}^{r} x_{j}\Re\sum_{p \leq X}\frac{\chi_{j}(p)e^{-i\theta_j}}{p^{\s + it}}\r)dt,
		\end{align}
		where $\mathcal{A} = \mathcal{A}(T, X, \bm{\chi}, \bm{\theta})$ is the set defined by \eqref{def_sA_JEV} with $a_{j}(p)=\chi_{j}(p)e^{-i\theta_{j}}$.
		By this equation and Propositions \ref{MPJEV}, \ref{AFIQ}, we have
		\begin{align}	\label{DPJEV1}
			&\int_{-\infty}^{\infty} \cdots \int_{-\infty}^{\infty}e^{y_{1}v_{1} + \cdots + y_{r}v_{r}}
			\tilde\Psi_{\mathcal{A}}(T, (v_{1}, \dots, v_{r}), X) d v_{1} \cdots d v_{r}\\
			& = \exp\l(\frac{G(\s)}{\log{\| \bm{x} \|}}
			\l( F_{\bm{\chi}, \bm{\theta}, \s}(\bm{x})
			+ O_{\s, \bm{\chi}}\l( \frac{\| \bm{x} \|^{\frac{1}{\s}}\log{\log{\| \bm{x} \|}}}{\log{\| \bm{x} \|}} \r) \r) \r).
		\end{align}
		Next, we divide the range of the integral of \eqref{DPJEV1} as follows:
		\begin{align}
			\label{DPJEV2}
			\begin{aligned}
				\int_{-\infty}^{\infty} \cdots \int_{-\infty}^{\infty}
				= & \int_{V(1 - \delta)}^{V(1 + \delta)} \cdots \int_{V(1 - \delta)}^{V(1 + \delta)}
				+ \sum_{k = 1}^{r} \l(\int \cdots \int_{D_{k}^{+}} + \int \cdots \int_{D_{k}^{-}}\r),
			\end{aligned}
		\end{align}
		where
		\begin{align*}
			\int \cdots \int_{D_{k}^{+}}
			= \int_{V(1 - \delta)}^{V(1 + \delta)} \cdots \int_{V(1 - \delta)}^{V(1 + \delta)}
			\int_{V(1 + \delta)}^{\infty}\overbrace{\int_{-\infty}^{\infty} \cdots \int_{-\infty}^{\infty}}^{k-1},\\
			\int \cdots \int_{D_{k}^{-}}
			= \int_{V(1 - \delta)}^{V(1 + \delta)} \cdots \int_{V(1 - \delta)}^{V(1 + \delta)}
			\int_{-\infty}^{V(1 - \delta)}\overbrace{\int_{-\infty}^{\infty} \cdots \int_{-\infty}^{\infty}}^{k-1},
		\end{align*}
		for some  small $\delta$ to be chosen later.
		By equation \eqref{DPJEV1}, we find that for any $\epsilon>0$
		\begin{align}
			\label{DPJEVv2_1}
			&\int \cdots \int_{D_{k}^{\pm}} e^{y_{1}v_{1} + \cdots + y_{r}v_{r}}
			\tilde\Psi_{\mathcal{A}}(T, (v_{1}, \dots, v_{r}), X) d v_1 \cdots d v_{r}\\
			&\leq e^{\mp \e y_{k} V(1 \pm \delta)}
			\int_{-\infty}^{\infty} \cdots \int_{-\infty}^{\infty}
			e^{y_{1} v_{1} + \cdots + (1 \pm \e)y_{k} v_{k} + \cdots + y_{r} v_{r}}
			\tilde\Psi_{\mathcal{A}}(T, (v_{1}, \dots, v_{r}), X) d v_{1} \cdots d v_{r}\\
			&= e^{\mp \e y_{k} V(1 \pm \delta)}
			\exp\l(\frac{G(\s)}{\log{\| \bm{x} \|}}
			\l( F_{\bm{\chi}, \bm{\theta}, \s}(x_{1}, \dots, (1 \pm \e)x_{k}, \dots, x_{r})
			+ O_{\s, \bm{\chi}}\l( \frac{\| \bm{x} \|^{\frac{1}{\s}} \log{\log{\| \bm{x} \|}}}{\log{\| \bm{x} \|}} \r) \r) \r).
		\end{align}
		It follows from the mean value theorem that there exist $c_{\pm}\in (0,1)$ such that
		\begin{align}\label{Ftaylor}
			&F_{\bm{\chi}, \bm{\theta}, \s}(x_{1}, \dots, (1 \pm \e)x_{k}, \dots, x_{r})\\
			&= F_{\bm{\chi}, \bm{\theta}, \s}(\bm{x})
			\pm \frac{\partial F_{\bm{\chi}, \bm{\theta}, \s}}{\partial x_{k}}(x_{1}, \dots, (1 \pm c_{\pm} \e)x_{k}, \dots, x_{r}) \e x_{k}.
		\end{align}
		From the definition of $F_{\bm{\chi}, \bm{\theta}, \s}$, we find that
		\begin{align*}
			&\frac{\partial F_{\bm{\chi}, \bm{\theta}, \s}}{\partial x_{k}}(x_{1}, \dots, (1 \pm c_{\pm} \e)x_{k}, \dots, x_{r})\\
				&= \frac{1}{\phi(d)}\sum_{u \in (\ZZ / d \ZZ)^{\times}}\frac{1}{2\s} K_{\bm{\chi}, \bm{\theta}}(u, x_{1}, \dots, (1 \pm c_{\pm} \e)x_{k}, \dots, x_{r})^{\frac{1}{2\s} - 1}
				\frac{\partial K_{\bm{\chi}, \bm{\theta}}}{\partial x_{k}}(u, x_{1}, \dots, (1 \pm c_{\pm} \e)x_{k}, \dots, x_{r})\\
				&= \frac{1}{\s \phi(d)}\sum_{u \in (\ZZ / d\ZZ)^{\times}}
				\l( \frac{x_{k}^{2}}{\a_{k}^{2}} \bigg| \sum_{j = 1}^{r}\a_{j} e^{-i\theta_{j}}\chi_{j}(u) \pm \alpha_k c_{\pm} \e e^{-i\theta_{k}} \chi_{k}(u) \bigg|^2 \r)^{\frac{1}{2\s} - 1}\\
				&\qqqquad\qqqquad\times \frac{x_{k}}{\a_{k}} \l( \Re \overline{e^{-i\theta_{k}}\chi_{k}(u)}
				\sum_{\ell = 1}^{r}\a_{\ell}e^{-i\theta_{\ell}}\chi_{\ell}(u) \pm \alpha_k c_{\pm} \e \r)\\
				&= \frac{(x_{k} / \a_{k})^{\frac{1}{\s} - 1}}{\s \phi(d)}\sums_{u \in (\ZZ / d\ZZ)^{\times}}
				\bigg| \sum_{j = 1}^{r}\a_{j} e^{-i\theta_{j}}\chi_{j}(u)\bigg|^{\frac{1}{\s} - 2} \times \l(1 + O_{\s, \bm{\chi}, \bm{\a}}\l(\e \r)\r)^{\frac{1}{\s} - 2}\\
				&\qqqquad\times \Re \overline{e^{-i\theta_{k}}\chi_{k}(u)}
				\sum_{\ell = 1}^{r}\a_{\ell}e^{-i\theta_{\ell}}\chi_{\ell}(u)
				\l(1 + O_{\s, \bm{\chi}, \bm{\a}}\l(\e \r) \r)
				+ O_{\s, \bm{\chi}, \bm{\a}}\l((\e x_{k})^{\frac{1}{\s} - 1}\r)\\
				&= \s^{-1} \Xi_{k}(\s, \bm{\chi}, \bm{\theta}; \bm{\a}) (x_{k} / \a_{k})^{\frac{1}{\s} - 1}
				+ O_{\s, \bm{\chi}, \bm{\a}}\l( (\e \| \bm{x} \|)^{\frac{1}{\s} - 1} \r)
		\end{align*}
		where $\Xi(\sigma, \bm \chi, \bm \theta; \bm \alpha)$ is defined in \eqref{def_Xi_j}.
		Applying this in \eqref{Ftaylor} and using the definition of $\bm{x}$, we find that \eqref{DPJEVv2_1} is
		\begin{align} \label{nDPJEV1}
			&\leq \exp\l(\frac{G(\s)}{\log{\| \bm{x} \|}}
			\l( F_{\bm{\chi}, \bm{\theta}, \s}(\bm{x})
			- \frac{\e \delta}{2\s} \Xi_{k}(\s, \bm{\chi}, \bm{\theta}; \bm{\a}) x_{k}^{\frac{1}{\s}} \a_{k}^{1 - \frac{1}{\s}} \r) \r)
		\end{align}
		by taking $\delta = K\epsilon^{\frac{1}{\sigma}-1}$ with $\e = (\log{\log{\| \bm{x} \|}} / \log{\| \bm{x} \|})^{\s}$ and $K$ sufficiently large depending on $\sigma, \bm \chi, \bm \alpha$.
		From the assumption that $\Xi_{k}(\s, \bm \chi, \bm \theta;\bm \alpha) > 0$ we can
		choose $K$ large enough depending only on $\s$, $\bm{\chi}, \bm \theta$, and $\bm{\a}$ so that
		\begin{align}\label{Vdelta}
			&\int_{V(1 - \delta)}^{V(1 + \delta)}
			\cdots \int_{V(1 - \delta)}^{V(1 + \delta)}
			e^{y_{1}v_{1} + \cdots + y_{r}v_{r}}\tilde\Psi_{\mathcal{A}}(T, (v_1, \dots, v_{r}), X)d v_1 \cdots d v_r\\
			& = \exp\l(\frac{G(\s)}{\log{\| \bm{x} \|}}
			\l( F_{\bm{\chi}, \bm{\theta}, \s}(\bm{x})
			+ O_{\s, \bm{\chi}, \bm \alpha}\l( \frac{\| \bm{x} \|^{\frac{1}{\s}}\log{\log{\| \bm{x} \|}}}{\log{\| \bm{x} \|}} \r) \r) \r).
		\end{align}
		by equation \eqref{DPJEV1} and \eqref{DPJEV2}.

		Note that
		\begin{align*}
			&\int_{V(1 - \delta)}^{V(1 + \delta)}
			\cdots \int_{V(1 - \delta)}^{V(1 + \delta)}
			e^{y_{1}v_{1} + \cdots + y_{r}v_{r}} dv_{1} \cdots dv_{r}
			= \exp((y_{1} + \cdots + y_{r}) V (1 + O(\delta))),
		\end{align*}
		which, together with $\eqref{Vdelta}$, gives
		\begin{align}
			& \tilde\Psi_{\mathcal{A}}(T, (V(1 + \delta), \dots, V(1 + \delta)), X)\\
			\label{nDPJEV3}
			& \leq \exp\l(
			\frac{G(\s)}{\log{\| \bm{x} \|}}
			\l( F_{\bm{\chi}, \bm{\theta}, \s}(\bm{x})
			+ O_{\s, \bm{\chi}, \bm \alpha}\l( \frac{\| \bm{x} \|^{\frac{1}{\s}} \log{\log{\| \bm{x} \|}}}{\log{\| \bm{x} \|}} \r) \r)
			- (y_{1} + \cdots + y_{r}) V (1 + O(\delta)) \r)\\
			& \leq \tilde\Psi_{\mathcal{A}}(T, (V(1 - \delta), \dots, V(1 - \delta)), X).
		\end{align}
		It follows by \eqref{def_xi_chi}, \eqref{def_Xi_j}, and $\frac{x_{1}}{\a_{1}} = \frac{x_{2}}{\a_{2}} = \cdots = \frac{x_{r}}{\a_{r}}$ that
		\begin{align*}
			(y_{1} + \cdots + y_{r}) V
			&= \l( \a_{1} \Xi_{1}(\s, \bm{\chi}, \bm{\theta}; \bm{\a})\frac{x_{1}}{\a_{1}}
			+ \cdots + \a_{r} \Xi_{r}(\s, \bm{\chi}, \bm{\theta}; \bm{\a}) \frac{x_{r}}{\a_{r}} \r) V
			= \xi(\s, \bm{\chi}, \bm{\theta}; \bm{\a})\frac{x_{1}}{\a_{1}} V.
		\end{align*}
		Using \eqref{def_F_chi} and the equation $V = \frac{G(\s)}{\s \log{x_{1}}} (x_{1} / \a_{1})^{\frac{1}{\s} - 1}$,
		we also have
		\begin{align}
			&\frac{G(\s) F_{\bm{\chi}, \bm{\theta}, \s}(\bm{x})}{\log{\| \bm{x} \|}}\\
			&= \l(\frac{1}{\phi(d)}\sum_{u \in (\ZZ / d \ZZ)^{\times}}\bigg| \sum_{\ell = 1}^{r}
			\a_{\ell} e^{-i\theta_\ell}\chi_{\ell}(u) \bigg|^{\frac{1}{\s}}\r)
			\times \frac{G(\s)}{\log{x_{1}}}\l( \frac{x_{1}}{\a_{1}} \r)^{1/\s}\l( 1 + O_{\s,\bm{\chi}, \bm{\a}}\l( \frac{1}{\log{x_{1}}} \r) \r)\\
			\label{pDPJEVex1}
			&= \s \xi(\s, \bm{\chi}, \bm{\theta}; \bm{\a}) \frac{x_{1}}{\a_{1}} V\l(1 + O_{\s,\bm{\chi}, \bm{\a}}\l( \frac{1}{\log{x_{1}}} \r)\r).
		\end{align}
		Hence, the middle in \eqref{nDPJEV3} is equal to
		\begin{align*}
			\exp\l(-(1 - \s)\xi(\s, \bm{\chi}, \bm{\theta}; \bm{\a})\frac{x_{1}}{\a_{1}} V\l( 1 + O_{\s, \bm{\chi}, \bm{\a}}\l( \delta \r) \r)\r),
		\end{align*}
		which combined with the inequalities \eqref{nDPJEV3} yields that
		\begin{align*}
			&\tilde\Psi_{\mathcal{A}}(T, (V, \dots, V), X)\\
			&= \exp\l(-(1 - \s)\xi(\s, \bm{\chi}, \bm{\theta}; \bm{\a})\frac{x_{1}}{\a_{1}} V
			\l( 1 + O_{\s, \bm{\chi}, \bm{\theta}, \bm{\a}}\l( \l(\frac{\log{\log{\|\bm{x}\|}}}{\log{\|\bm{x}\|}}\r)^{1 - \s} \r) \r)\r).
		\end{align*}
		Using \eqref{DPJEV4}, we obtain
		\begin{align*}
			&\tilde\Psi_{\mathcal{A}}(T, (V, \dots, V), X)\\
			&= \exp\l(-\xi(\s, \bm{\chi}, \bm{\theta}; \bm{\a})A(\s)V^{\frac{1}{1-\s}} (\log{V})^{\frac{\s}{1-\s}}
			\l(1 + O_{\s, \bm{\chi}, \bm{\theta}, \bm{\a}}\l(\l(\frac{\log{\log{V}}}{\log{V}}\r)^{1 - \s}\r)\r) \r).
		\end{align*}
		By this equation and Lemma \ref{ESAEG_JEV}, we obtain that when $a_{2}$ is suitably small,
		\begin{align*}
			&\Psi(T, \bm{V}, \bm{\chi}, \bm{\theta}; \bm{\a})
			= \tilde\Psi_{\mathcal{A}}(T, (V, \dots, V), X) + O\l(\frac{1}{T}\meas([T, 2T] \setminus \mathcal{A})\r)\\
			&= \exp\l(-\xi(\s, \bm{\chi}, \bm{\theta}; \bm{\a}) A(\s) V^{\frac{1}{1-\s}} (\log{V})^{\frac{\s}{1-\s}}
			\l(1 + O_{\s, \bm{\chi}, \bm{\theta}, \bm{\a}}\l( \l(\frac{\log{\log{V}}}{\log{V}}\r)^{1-\s} \r) \r) \r)
		\end{align*}
		for $\bm V=(\Xi_1(\sigma, \bm \chi, \bm \theta;\bm \alpha)V, \dots, \Xi_r(\sigma, \bm \chi, \bm \theta; \bm \alpha)V).$
		This completes the proof of Proposition \ref{Main_Prop_JEV}.
	\end{proof}

\section{\textbf{Value distribution of Dirichlet $L$-functions in the strip $\frac{1}{2} < \s < 1$}}

	\begin{proof}[Proof of Theorem \ref{Main_Thm_EV_R}]
		Let $\frac{1}{2} < \s < 1$, $\bm{\theta} \in \RR^{r}$,
		and let $\bm{\chi} = (\chi_{1}, \dots, \chi_{r})$ be an $r$-tuple of Dirichlet characters modulo $q_{j}$.
		Assume that there exists an $\bm{\a} = (\RR_{> 0})^{r}$ such that
		$\Xi_{1}(\s, \bm{\chi}, \bm{\theta}; \bm{\a}), \dots, \Xi_{r}(\s, \bm{\chi}, \bm{\theta}; \bm{\a}) > 0$.
		Let $T$ be a large parameter, and let $X = (\log{T})^{L}$ with $L = \frac{10}{2\s - 1}$.
		Moreover, let $\bm{V} = (\Xi_{1}(\s, \bm{\chi}, \bm{\theta}; \bm{\a})V, \dots, \Xi_{r}(\s, \bm{\chi}, \bm{\theta}; \bm{\a})V)$
		with $V$ a large parameter satisfying $V \leq a_{1} \frac{(\log{T})^{1 - \s}}{\log{\log{T}}}$.
		Here, we choose $a_{1}(\s, \bm{\a}) = a_{2}(\s, L, \bm{\a}) > 0$ where $a_{2}$ is the same constant as in Proposition \ref{Main_Prop_JEV}.
		Using Lemma \ref{KLST}, we see that
		\begin{align} \label{pMTh1}
			& \max_{1 \leq j \leq r}\frac{1}{T}\int_{T}^{2T}
			\bigg| \log{L(\s + it, \chi_{j})} - \sum_{2 \leq n \leq X}\frac{\Lam(n) \chi_{j}^{\star}(n)}{n^{\s+it} \log{n}}
			- \sum_{p \mid q_{j}} \log\l( 1 - \frac{\chi_{j}^{\star}(p)}{p^{\s + it}} \r) \bigg|^{2k}dt\\
			&\leq A^{k} k^{4k}T^{(1 - 2\s)\delta_{\bm{\chi}}}
			+ A^{k} k^{k} \l(\frac{X^{1 - 2\s}}{\log{X}}\r)^{k}
			\leq A^{k} k^{4k}T^{(1 - 2\s)\delta_{\bm{\chi}}} + \l(\frac{k}{(\log{T})^{9}}\r)^{k}
		\end{align}
		for $1 \leq k \leq \frac{1}{L}\frac{\log{T}}{\log{\log{T}}}$ with $\delta_{\bm{\chi}}$ and $A = A(\s, \bm{\chi})$ some positive constants.
		We see that
		\begin{align*}
			\bigg| \sum_{2 \leq n \leq X}\frac{\Lam(n) \chi_{j}(n)}{n^{\s+it} \log{n}}
			- \sum_{2 \leq n \leq X}\frac{\Lam(n) \chi_{j}^{\star}(n)}{n^{\s+it} \log{n}} \bigg|
			= \bigg| \sum_{\substack{p^{\ell} \leq X \\ p \mid q_{j}, \ell \geq 1}}\frac{\chi_{j}^{\star}(p^{\ell})}{\ell p^{\ell(\s+it)}} \bigg|
			\leq \sum_{p \mid q_{j}}\frac{1}{p^{\s}  - 1},
		\end{align*}
		and that
		\begin{align*}
			\bigg| \sum_{2 \leq n \leq X}\frac{\Lam(n) \chi_{j}(n)}{n^{\s+it} \log{n}} - \sum_{p \leq X}\frac{\chi_{j}(p)}{p^{\s + it}} \bigg|
			\leq \sum_{p^{\ell}, \ell \geq 2}\frac{|\chi_{j}(p^{\ell})|}{\ell p^{\ell \s}}.
		\end{align*}
		Applying inequality \eqref{pMTh1} with $k = \lfloor c \frac{\log{T}}{\log{\log{T}}} \rfloor$, $c = \min\{1, \delta_{\bm{\chi}}\}(2\s - 1) / 20$ and the above two inequalities,
		we can find that there exists a set $\mathcal{C} \subset [T, 2T]$ such that
		$\frac{1}{T}\meas([T, 2T] \setminus \mathcal{C}) \leq T^{-d}$ with $d = d(\s, \bm{\chi})$ some positive constant,
		and for all $t \in \mathcal{C}$ and $j = 1, \dots, r$,
		\begin{gather}  \label{def_Set_C}
			\bigg| \log{L(\s + it, \chi_{j})} - \sum_{p \leq X}\frac{\chi_{j}(p)}{p^{\s+it}} \bigg|
			\leq 1 + C_{j},
		\end{gather}
		where $C_{j}$ is the positive constants given by $C_{j} = \sum_{p \mid q_{j}}\frac{1}{p^{\s} - 1} + \sum_{p^{\ell}, \ell \geq 2}\frac{|\chi_{j}(p^{\ell})|}{\ell p^{\ell \s}}$.
		In particular, we have
		\begin{align} \label{ESTCC}
			\frac{1}{T}\meas([T, 2T] \setminus \mathcal{C})
			\leq \exp\l(-2\xi(\s, \bm{\chi}, \bm{\theta}; \bm{\a}) A(\s)V^{\frac{1}{1-\s}}(\log{V})^{\frac{\s}{1-\s}} \r)
		\end{align}
		when $T$ is sufficiently large.
		By the inequality \eqref{def_Set_C} we also find that
		\begin{align*}
			&\tilde\Psi_{\mathcal{C}}(T, (V + (1 + C_{1})/\Xi_{1}(\s, \bm{\chi}, \bm{\theta}; \bm{\a}),
			\dots, V + (1 + C_{r}) / \Xi_{r}(\s, \bm{\chi}, \bm{\theta}; \bm{\a})), X)\\
			&\leq \frac{1}{T}\meas\set{ t \in \mathcal{C}}
			{\Re e^{-i\theta_{j}}\log{L(\s + it, \chi_j)} > \Xi_{j}(\s, \bm{\chi}, \bm{\theta}; \bm{\a})  V_j \text{ for } j = 1, \dots, r}\\
			&\leq \tilde\Psi_{\mathcal{C}}(T, (V - (1 + C_{1}) / \Xi_{1}(\s, \bm{\chi}, \bm{\theta}; \bm{\a}),
			\dots, V - (1 + C_{r}) / \Xi_{r}(\s, \bm{\chi}, \bm{\theta}; \bm{\a})), X)
		\end{align*}
		where $\tilde\Psi_{\mathcal{C}}(T, \bm{V}, X)$ is defined by \eqref{def_Psi_S}.
		By these inequalities and Proposition \ref{Main_Prop_JEV}, we have
		\begin{align*}
			&\Psi(T, \bm{V}, \bm{\chi}, \bm{\theta})\\
			&= \exp\l( -\xi(\s, \bm{\chi}, \bm{\theta}; \bm{\a}) A(\s) V^{\frac{1}{1-\s}}(\log{V})^{\frac{\s}{1-\s}}
			\l( 1 + O_{\s, \bm{\chi}, \bm{\theta}, \bm{\a}}\l( \l(\frac{\log{\log{V}}}{\log{V}}\r)^{1-\s} \r) \r) \r)\\
			&\qquad+ O\l( \frac{1}{T}\meas([T, 2T] \setminus \mathcal{C}) \r)
		\end{align*}
		which together with the inequality \eqref{ESTCC} yields Theorem \ref{Main_Thm_EV_R}.
	\end{proof}

	Next we prove Theorem \ref{thm2}. We fist prove the positivity of $\Xi_i(\sigma, \bm \chi, \bm 1)$ when $r=2$.

	\begin{lemma} \label{RL_X_x_2}
		Let $\bm{\chi} = (\chi_{1}, \chi_{2})$ with $\chi_{1}\not\sim \chi_{2}$ and
		let $\bm \theta=(\theta_1, \theta_2) \in \RR^{2}$.
		Then we have $\Xi_{1}(\s, \bm \chi, \bm \theta; \bm 1) = \Xi_{2}(\s, \bm \chi, \bm \theta;\bm 1) = \xi(\s, \bm{\chi}, \bm \theta;\bm 1) / 2>0$.
	\end{lemma}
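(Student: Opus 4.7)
The plan is to evaluate $\Xi_1$ and $\Xi_2$ directly from the definition \eqref{def_Xi_j} with $\bm{\alpha}=\bm{1}$ and then deduce the three claims in turn.

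First, for each $u \in (\ZZ/d\ZZ)^{\times}$ write $w_j \ceq e^{-i\theta_j}\chi_j(u)$, so that $|w_j|=1$ (since $(u,d)=1$ implies $(u,q_j)=1$ for each $j$). The definition of $\Xi_j$ then becomes
\begin{align}
\Xi_j(\s,\bm\chi,\bm\theta;\bm1)
= \frac{1}{\phi(d)}\sums_{u}|w_1+w_2|^{\frac{1}{\s}-2}\,\Re\!\l[\ol{w_j}(w_1+w_2)\r].
\end{align}
Since $\Re(\ol{w_1}w_1)=\Re(\ol{w_2}w_2)=1$ and $\Re(\ol{w_1}w_2)=\Re(\ol{w_2}w_1)$, the integrand for $\Xi_1$ equals that for $\Xi_2$; therefore $\Xi_1(\s,\bm\chi,\bm\theta;\bm1)=\Xi_2(\s,\bm\chi,\bm\theta;\bm1)$.

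Second, for the equality $\Xi_1=\xi/2$, I would use the identity \eqref{Re_xX}, which with $\bm{\alpha}=\bm{1}$ reduces to $\xi=\Xi_1+\Xi_2$. Combined with $\Xi_1=\Xi_2$, this immediately gives $\Xi_1=\Xi_2=\xi/2$. (One can also verify this directly: summing the integrands for $\Xi_1$ and $\Xi_2$ produces $|w_1+w_2|^{\frac{1}{\s}-2}|w_1+w_2|^2=|w_1+w_2|^{\frac{1}{\s}}$, matching the definition of $\xi$.)

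Third, the positivity $\xi(\s,\bm\chi,\bm\theta;\bm1)>0$: since $\xi$ is a sum of non-negative terms, it suffices to exhibit one $u\in(\ZZ/d\ZZ)^{\times}$ with $e^{-i\theta_1}\chi_1(u)+e^{-i\theta_2}\chi_2(u)\neq 0$. Suppose, for contradiction, that this sum vanishes for every such $u$. Taking $u=1$ gives $e^{-i\theta_1}+e^{-i\theta_2}=0$, hence $e^{i(\theta_1-\theta_2)}=-1$, and substituting back yields $\chi_1(u)=\chi_2(u)$ for all $u\in(\ZZ/d\ZZ)^{\times}$. The final step is to argue that this forces $\chi_1\sim\chi_2$, contradicting the hypothesis. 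The only mild obstacle is that $\chi_j(u)$ equals the primitive value $\chi_j^{\star}(u)$ only when $u$ is coprime to $d$, not just to the common modulus $D\ceq\lcm(d_1^{\star},d_2^{\star})$; I would resolve this by noting that for any $v\in(\ZZ/D\ZZ)^{\times}$ there exists $u\equiv v\pmod D$ with $(u,d)=1$ (e.g.\ by choosing a prime $p\equiv v\pmod D$ larger than every prime divisor of $d$, via Dirichlet's theorem), from which $\chi_1^{\star}(v)=\chi_1(u)=\chi_2(u)=\chi_2^{\star}(v)$, giving $\chi_1^{\star}=\chi_2^{\star}$ and hence $\chi_1\sim\chi_2$, the desired contradiction. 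Thus $\xi>0$, and consequently $\Xi_1=\Xi_2=\xi/2>0$.
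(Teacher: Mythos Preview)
Your proof is correct and, for the equalities $\Xi_1=\Xi_2$ and $\Xi_1+\Xi_2=\xi$, follows essentially the same route as the paper: a direct termwise comparison of the summands (using $|w_j|=1$ and $\Re(\ol{w_1}w_2)=\Re(\ol{w_2}w_1)$) together with the relation \eqref{Re_xX}.

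The only genuine difference is in the positivity argument. The paper observes that at $\s=\tfrac12$ the defining sum becomes a second moment and, by orthogonality of characters,
\[
\xi(\tfrac12,\bm\chi,\bm\theta;\bm1)
=\frac{1}{\phi(d)}\sum_{u\in(\ZZ/d\ZZ)^\times}\sum_{j,k}e^{-i(\theta_j-\theta_k)}\chi_j\bar\chi_k(u)=2,
\]
so some term is nonzero, hence $\xi(\s,\bm\chi,\bm\theta;\bm1)>0$ for every $\s$. You instead argue by contradiction that if every term vanished then $\chi_1(u)=\chi_2(u)$ on $(\ZZ/d\ZZ)^\times$, forcing $\chi_1\sim\chi_2$. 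Your argument is fine; the appeal to Dirichlet's theorem is harmless but unnecessary, since the reduction map $(\ZZ/d\ZZ)^\times\to(\ZZ/D\ZZ)^\times$ is surjective whenever $D\mid d$, which already gives the lift you need. The paper's trick is a bit slicker (one line of orthogonality instead of a case analysis on primitive characters), while yours has the minor advantage of not needing to step outside the range $\tfrac12<\s<1$.
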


	\begin{proof}
		First, we prove $\xi(\s, \bm{\chi}, \bm{\theta}; \bm{1}) > 0$.
		Since $\chi_1$ and $\chi_2$ are inequivalent, we see that
		\begin{align}
			\xi(\tfrac{1}{2}, \bm{\chi}, \bm{\theta}; \bm{1})
			= \frac{1}{\phi(d)}\sum_{u \in (\ZZ / d\ZZ)^{\times}}\sum_{1 \leq j, k \leq 2}e^{-i(\theta_{j} - \theta_{k})}\chi_{j}\bar{\chi_{k}}(u)=2.
		\end{align}
		Therefore, there exists some $u_{0} \in (\ZZ / d\ZZ)^{\times}$ such that $\sum_{j = 1}^{2}e^{-i\theta_{j}}\chi_{j}(u_{0}) \not= 0$, which yields the positivity of $\xi(\s, \bm{\chi}, \bm{\theta}; \bm{1})$.

		Next, we show that
		$\Xi_{1}(\s, \bm{\chi}, \bm \theta; \bm 1) = \Xi_{2}(\s, \bm{\chi}, \bm \theta;\bm 1) = \xi(\s, \bm{\chi}, \bm{\theta}; \bm{1})/2$ when $r = 2$.
		The second equation follows from the first one since we have
		\begin{align*}
			\xi(\s, \bm{\chi}, \bm{\theta}; \bm 1) = \sum_{j = 1}^{r}\Xi_{j}(\s, \bm{\chi}, \bm{\theta}; \bm 1)
		\end{align*}
		by \eqref{Re_xX}.
		If $\chi_{1}(u) \not= 0$, $\chi_{2}(u) \not= 0$, we see that
		\begin{align*}
			&\Re \l(\overline{e^{-i\theta_1}\chi_{1}(u)}(e^{-i\theta_1}\chi_{1}(u) + e^{-i\theta_2}\chi_{2}(u))\r)
			= \Re \l(\overline{e^{-i\theta_2}\chi_{2}(u)}(e^{-i\theta_1}\chi_{1}(u) + e^{-i\theta_2}\chi_{2}(u))\r).
		\end{align*}
		Thus
		\begin{align*}
			&\Xi_{1}(\s, \bm{\chi},\bm \theta; \bm 1)
			= \frac{1}{\phi(d)}\sideset{}{^{*}}\sum_{u \in (\ZZ / d \ZZ)^{\times}}
			\bigg| \sum_{\ell = 1}^{2}e^{-i\theta_\ell}\chi_{\ell}(u) \bigg|^{\frac{1}{\s} - 2}
			\Re \l(\overline{e^{-i\theta_1}\chi_{1}(u)}\sum_{k = 1}^{2}e^{-i\theta_k}\chi_{k}(u)\r)\\
			&= \frac{1}{\phi(d)}\sideset{}{^{*}}\sum_{u \in (\ZZ / d \ZZ)^{\times}}
			\bigg| \sum_{\ell = 1}^{2}e^{-i\theta_\ell}\chi_{\ell}(u) \bigg|^{\frac{1}{\s} - 2}
			\Re\overline{e^{-i\theta_2}\chi_{2}(u)}\sum_{k = 1}^{2}e^{-i\theta_1}\chi_{k}(u)
			= \Xi_{2}(\s, \bm{\chi}, \bm \theta;\bm 1).
		\end{align*}
	\end{proof}

	\begin{remark}
		Note that the proof of Lemma \ref{RL_X_x_2} can be generalized when  $\{\overline{\chi_i}\chi_k\}_{k=1, \dots, \phi(q)}$ runs over all characters modulo $d$.
		We also note that with the choice $\bm \theta=(\theta, \dots, \theta)$ and $\bm \alpha=(1, \dots, 1)$
		\begin{align}
			\xi(\sigma, \bm \chi, \bm \theta; \bm 1)=\phi(q)^{\frac{1}{\sigma}-1}.
		\end{align}
		by orthogonality of Dirichlet characters.
		By the choice of $\bm \chi$, we see as in the proof of Lemma \ref{RL_X_x_2} that
		\begin{align}
			\Xi_j(\sigma, \bm \chi, \bm \theta; \bm{1})=\xi(\sigma, \bm \chi, \bm \theta; \bm 1)/\phi(q)>0.
		\end{align}
	\end{remark}

	\begin{proof}[Proof of Theorem \ref{thm2}]
		Combining Theorem \ref{Main_Thm_EV_R} and Lemma \ref{RL_X_x_2}, we derive Theorem \ref{thm2}.
	\end{proof}

	Next we give the proof of Theorem \ref{DP_DC} which shows that distinct Dirichlet characters ``repels" each other.

	\begin{proof}[Proof of Theorem \ref{DP_DC}]
		From Theorem \ref{thm2} and the identity $$\xi(\s, (\chi_1, \chi_2), (\theta_1, \theta_2); \bm 1) = \xi(\s, (1,\overline{\chi_{1}}\chi_{2}),(0, \theta_2-\theta_1); \bm 1),$$
		it is enough show that for any non-principal character $\chi \pmod q$ and $\theta \in \mathbb{R}$
		\begin{align}	\label{SIDP}
			0<	\frac{1}{\phi(q)}\sum_{a \in (\ZZ / q \ZZ)^{\times}}|1 + e^{-i\theta}\chi(a)|^{1/\s} < 2^{1 / 2\s}.
		\end{align}
		The positivity follows from Lemma \ref{RL_X_x_2}.
		Next we consider the upper bound.
		We have $|1 + e^{-i\theta}\chi(a)|^{\frac{1}{\s}} = 2^{\frac{1}{2\s}}(1 + \Re e^{-i\theta}\chi(a))^{\frac{1}{2\s}}$,
		and $(1 + x)^{1 / 2\s} \leq 1 + \frac{1}{2\s}x$ for $-1 \leq x \leq 1$.
		Using these, we have
		\begin{align*}
			\frac{1}{\phi(q)}\sum_{a \in (\ZZ / q \ZZ)^{\times}}|1 + e^{-i\theta}\chi(a)|^{1/\s}
			\leq \frac{2^{1/2\s}}{\phi(q)}\sum_{a \in (\ZZ / q \ZZ)^{\times}}\l(1 + \frac{1}{2\s} \Re e^{-i\theta}\chi(a)\r)
				= 2^{1 / 2\s},
		\end{align*}
		which proves inequality \eqref{SIDP}.
		Thus, we obtain Theorem \ref{DP_DC} using Theorem \ref{thm2}.
	\end{proof}

	As mentioned in Remark \ref{RMKXi}, the choice $\bm \alpha=\bm 1$ cannot guarantee
	that $\Xi_j(\sigma, \bm \chi, \bm \theta;\bm 1)>0$ for all $\bm \chi, \bm \theta$ and all $\frac{1}{2}< \sigma<1$.
	Nevertheless we can find a particular choice for $\bm \alpha$ such that $\Xi_j(\sigma, \bm \chi, \bm \theta;\bm \alpha)>0$.
	To state the following results, we
	introduce the quantity $B(\bm{\chi}, \bm{\theta})$ defined by
	\begin{align}	\label{def_B_chi_theta}
		B(\bm{\chi}, \bm{\theta})
		= \underset{\chi_{j} \sim \chi_{1}^{2}\bar{\chi_{\ell}}}{\sum_{j=2}^r\sum_{\ell=2}^{r}}\cos(2\theta_{1} - \theta_{j} - \theta_{\ell}).
	\end{align}

	\begin{proposition}	\label{Pos_Xi}
		Let $\frac{1}{2} < \s < 1$, $\bm{\theta} \in \RR^{r}$,
		and let $\bm{\chi} = (\chi_{1}, \dots, \chi_{r})$ be an $r$-tuple of Dirichlet characters. Suppose $\chi_i \not\sim \chi_j$ for $i\not=j.$
		For $\bm{\a} = (\a, 1, \dots, 1)$ with $\a \geq  \a_{0}(\sigma, r)$ sufficiently large, we have
		\begin{align}
			&\Xi_{j}(\s, \bm{\chi}, \bm{\theta}; \bm{\a})\\
			&= \l\{
			\begin{array}{ll}
				\a^{\frac{1}{\s} - 1}\l( 1 + \frac{r - 1 - (2\s - 1)B(\bm{\chi}, \bm{\theta})}{4 \s \a^{2}}\l( \frac{1}{\s} - 2 \r)
				+ O_{\sigma,r}(\a^{-3}) \r)                                       & \text{if\; $j = 1$,}                                                                      \\
				\a^{\frac{1}{\s} - 2}\l(\frac{1 -(2\s - 1) \cos(2\theta_{1} - \theta_{j} - \theta_{\ell})}{2\s}
				+ O_{\sigma,r}(\a^{-1})\r)                                        & \text{if\; $\chi_{j} \sim \chi_{1}^{2} \bar{\chi_{\ell}}$ for some $2 \leq \ell \leq r$,} \\
				\a^{\frac{1}{\s} - 2}\l(\frac{1}{2\s} + O_{ \sigma,r}(\a^{-1})\r) & \text{otherwise.}
			\end{array}
			\r.
		\end{align}
		In particular, we have $\Xi_{j}(\s, \bm{\chi}, \bm{\theta}; \bm{\a})>0$ for all $j$
		when $\bm{\a} = (\a, 1, \dots, 1)$ for $\a \geq \a_{1}(\s, r)$  sufficiently large depending only on $\s$ and $r$.
	\end{proposition}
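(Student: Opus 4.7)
The key idea is that for $\bm{\a} = (\a, 1, \dots, 1)$ with $\a$ large, the inner sum $S(u) := \sum_{\ell = 1}^{r} \a_{\ell} e^{-i\theta_{\ell}} \chi_{\ell}(u) = \a e^{-i\theta_{1}}\chi_{1}(u) + R(u)$, where $R(u) := \sum_{\ell = 2}^{r} e^{-i\theta_{\ell}} \chi_{\ell}(u)$, is dominated by its first term. Since $|\chi_{1}(u)| = 1$ and $|R(u)| \leq r-1$ for $u \in (\ZZ/d\ZZ)^{\times}$, we have $|S(u)| \geq \a - (r-1) > 0$ once $\a > r-1$, so the starred sum in \eqref{def_Xi_j} reduces to a sum over the full group $(\ZZ/d\ZZ)^{\times}$. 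Writing $|S(u)|^{2} = \a^{2}(1 + \e(u))$ with $\e(u) := \frac{2a(u)}{\a} + \frac{b(u)^{2}}{\a^{2}}$, $a(u) := \Re\bigl(e^{-i\theta_{1}}\chi_{1}(u)\overline{R(u)}\bigr)$, and $b(u) := |R(u)|$, I would apply the binomial theorem to expand
\[ |S(u)|^{\frac{1}{\s} - 2} = \a^{\frac{1}{\s} - 2}(1 + \e(u))^{\frac{1}{2\s} - 1} \]
as a power series in $1/\a$. The remaining factor $\Re\bigl(\overline{e^{-i\theta_{j}}\chi_{j}(u)}\, S(u)\bigr)$ equals $\a + a(u)$ when $j=1$, and $\a c_{j}(u) + d_{j}(u)$ when $j \geq 2$, where $c_{j}(u) := \Re(e^{-i(\theta_{1}-\theta_{j})}\chi_{1}(u)\overline{\chi_{j}(u)})$ and $d_{j}(u) := \Re(\overline{e^{-i\theta_{j}}\chi_{j}(u)}\, R(u))$. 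Multiplying the two expansions and averaging over $u$ then reduces, via orthogonality of Dirichlet characters mod $d$, to identifying which products of the $\chi_{\ell}, \overline{\chi_{k}}$ are principal, and pairwise inequivalence of the $\chi_{\ell}$'s kills most cross terms.

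For $j = 1$, the $1/\a$ coefficient involves $\frac{1}{\phi(d)} \sum_{u} a(u)$, which vanishes since each $\chi_{1}\overline{\chi_{\ell}}$ with $\ell \geq 2$ is non-principal. The $1/\a^{2}$ coefficient gathers $\frac{1}{\phi(d)} \sum_{u} b(u)^{2} = r-1$ (from the $\ell = k$ diagonal in $|R|^{2}$) and $\frac{1}{\phi(d)} \sum_{u} a(u)^{2} = \frac{r-1}{2} + \frac{B(\bm{\chi}, \bm{\theta})}{2}$, where the $B$ term records precisely the $(\ell,k)$ pairs for which $\chi_{1}^{2} \sim \chi_{\ell}\chi_{k}$. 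Setting $\gamma := \frac{1}{2\s} - 1 = \frac{1-2\s}{2\s}$, a short simplification of $\gamma(r-1) + \gamma^{2}(r-1+B)$ produces $\frac{1-2\s}{4\s^{2}}\bigl((r-1) - (2\s-1)B(\bm{\chi},\bm{\theta})\bigr) = \frac{1}{4\s}\bigl(\frac{1}{\s}-2\bigr)\bigl((r-1)-(2\s-1)B(\bm{\chi},\bm{\theta})\bigr)$, matching the stated formula for $j=1$.

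For $j \geq 2$, the leading $\a^{1/\s - 1}$ coefficient $\frac{1}{\phi(d)} \sum_{u} c_{j}(u)$ vanishes since $\chi_{1} \not\sim \chi_{j}$. At order $\a^{1/\s - 2}$ two contributions survive: the constant $1 = \frac{1}{\phi(d)} \sum_{u} d_{j}(u)$ (only the $\ell = j$ piece of $R$ survives, by pairwise inequivalence), and $2\gamma \cdot \frac{1}{\phi(d)} \sum_{u} a(u) c_{j}(u)$. Expanding $a(u)c_j(u)$ and applying orthogonality yields $\frac{1}{2}$ (from the diagonal $\ell = j$) plus, when such an $\ell$ exists, $\frac{1}{2}\cos(2\theta_{1} - \theta_{j} - \theta_{\ell})$ from the unique $\ell \in \{2, \dots, r\}$ satisfying $\chi_{j} \sim \chi_{1}^{2}\overline{\chi_{\ell}}$ (uniqueness again by pairwise inequivalence). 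Combining gives $\frac{1}{2\s}$ in the generic case and $\frac{1 - (2\s-1)\cos(2\theta_{1} - \theta_{j} - \theta_{\ell})}{2\s}$ in the resonant case, up to $O(\a^{-1})$.

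Positivity then follows immediately: for $j=1$ the leading factor $\a^{1/\s - 1}$ is positive, and for $j \geq 2$ the leading coefficient is bounded below by $\frac{1 - (2\s-1)}{2\s} = \frac{2(1-\s)}{2\s} > 0$ since $|\cos| \leq 1$ and $2\s - 1 < 1$, so $\Xi_{j}(\s, \bm{\chi}, \bm{\theta}; \bm{\a}) > 0$ for $\a$ sufficiently large depending only on $\s$ and $r$. The main obstacle I expect is the combinatorial bookkeeping after multiplying out the two Taylor expansions: one must correctly match the resonance condition $\chi_{1}^{2} \sim \chi_{\ell}\chi_{k}$ underlying $B(\bm{\chi}, \bm{\theta})$ to the cosine coefficient in the $j=1$ case, and correctly identify the unique resonant index $\ell$ in each case for $j \geq 2$; pairwise inequivalence of the $\chi_{\ell}$'s is the consistent tool ensuring that only these explicit contributions survive.
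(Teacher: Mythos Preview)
Your proposal is correct and takes essentially the same approach as the paper: Taylor-expand $|S(u)|^{1/\s-2}$ in powers of $1/\a$, multiply by the remaining factor, and average over $(\ZZ/d\ZZ)^{\times}$ using orthogonality together with pairwise inequivalence to identify surviving terms. The paper organises the computation by introducing four auxiliary expressions $A_{j,1},\dots,A_{j,4}$ in place of your $a,b,c_{j},d_{j}$, but the resonance analysis, the identification of the $B(\bm{\chi},\bm{\theta})$ contribution, and the resulting asymptotic formulas are the same.
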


	\begin{proof}
		Let $\a_{1} = \a$ and $\a_{2} = \cdots = \a_{r} = 1$.
		It follows from the Taylor expansion that
		\begin{align}
			&\bigg| \sum_{\ell = 1}^{r}\a_{\ell} e^{-i\theta_{\ell}} \chi_{\ell}(u) \bigg|^{\frac{1}{\s} - 2}
			= \a^{\frac{1}{\s} - 2}\bigg| 1 + \frac{1}{\a}\sum_{\ell = 2}^{r}e^{i(\theta_{1} - \theta_{\ell})}
			\bar{\chi_{1}}\chi_{\ell}(u) \bigg|^{\frac{1}{\s} - 2}\\
			&= \a^{\frac{1}{\s} - 2}\Biggl\{ 1 + \frac{1}{\a}\l(\frac{1}{\s} - 2\r)
			\Re\sum_{\ell = 2}^{r} e^{i(\theta_{1} - \theta_{\ell})} \bar{\chi_{1}}\chi_{\ell}(u)
			- \frac{1}{2\a^{2}}\l( \frac{1}{\s} - 2 \r)\Re\l(\sum_{\ell = 2}^{r} e^{i(\theta_{1} - \theta_{\ell})} \bar{\chi_{1}}\chi_{\ell}(u) \r)^{2}\\
			&\qqquad+ \frac{1}{2\a^{2}}\l( \frac{1}{\s} - 2 \r)^{2}
			\l( \Re\sum_{\ell = 2}^{r} e^{i(\theta_{1} - \theta_{\ell})} \bar{\chi_{1}}\chi_{\ell}(u) \r)^{2}\Biggr\}
			+ O_{ \sigma,r}\l(\a^{\frac{1}{\s} - 5}\r).
		\end{align}
		By this formula and the fact that $\sum_{\ell = 1}^{r}\a_{\ell} e^{-i\theta_{\ell}} \chi_{\ell}(u)$
		is not zero for all $u \in (\ZZ / d\ZZ)^{\times}$ when $\a$ is large enough (e.g. $\alpha > r-1$),
		we find that
		\begin{align}
			&\Xi_{j}(\s, \bm{\chi}, \bm{\theta}; \bm{\a}) \\
			&=\frac{\a^{\frac{1}{\s} - 2}}{\phi(d)}\sum_{u \in (\ZZ / d\ZZ)^{\times}}
			\Biggl\{ 1 + \frac{1}{\a}\Big(\frac{1}{\s} - 2\Big)
			\Re\sum_{\ell = 2}^{r} e^{i(\theta_{1} - \theta_{\ell})} \bar{\chi_{1}}\chi_{\ell}(u)\\
			&\qqquad- \frac{1}{2\a^{2}}\Big( \frac{1}{\s} - 2 \Big)\Re\Big(\sum_{\ell = 2}^{r} e^{i(\theta_{1} - \theta_{\ell})} \bar{\chi_{1}}\chi_{\ell}(u) \Big)^{2}\\
			&\qqquad+ \frac{1}{2\a^{2}}\Big( \frac{1}{\s} - 2 \Big)^{2}
			\Big( \Re\sum_{\ell = 2}^{r} e^{i(\theta_{1} - \theta_{\ell})} \bar{\chi_{1}}\chi_{\ell}(u) \Big)^{2}\Biggr\}
			\Re \bar{e^{-i\theta_{j}}\chi_{j}(u)}\sum_{k = 1}^{r}\a_{k} e^{-i\theta_{k}} \chi_{k}(u)\\
			&\qqqquad+ O_{\sigma,r}\l( \a^{\frac{1}{\s} - 4} \r)\\
			&= \frac{\a^{\frac{1}{\s} - 2}}{\phi(d)}\sum_{u \in (\ZZ / d\ZZ)^{\times}}
			\l(A_{j,1}(u) + \frac{1}{\a}\Big( \frac{1}{\s} - 2 \Big)A_{j,2}(u) - \frac{1}{2\a}\Big( \frac{1}{\s} - 2 \Big)A_{j,3}(u)
			+ \frac{1}{2\a}\Big( \frac{1}{\s} - 2 \Big)^{2}A_{j,4}(u)\r)\\
			&\qquad+ O_{\sigma,r}\l( \a^{\frac{1}{\s} - 4} \r),
		\end{align}
		where
		\begin{align}
			A_{j,1}(u) &\ceq \Re \bar{e^{-i\theta_{j}}\chi_{j}(u)}\sum_{k = 1}^{r}\a_{k} e^{-i\theta_{k}} \chi_{k}(u),\\
			A_{j,2}(u) &\ceq \sum_{\ell = 2}^{r}\sum_{k = 1}^{r}\a_{k}
			\Re\l(e^{i(\theta_{1} - \theta_{\ell})} \bar{\chi_{1}} \chi_{\ell}(u)\r)
			\Re\l(e^{i(\theta_{j} - \theta_{k})} \bar{\chi_{j}} \chi_{k}(u)\r),\\
			A_{j,3}(u)& \ceq \Re\l( e^{-i(\theta_{1} - \theta_{j})}\chi_{1} \bar{\chi_{j}}(u) \r)
			\Re\l(\sum_{\ell = 2}^{r} e^{i(\theta_{1} - \theta_{\ell})} \bar{\chi_{1}}\chi_{\ell}(u) \r)^{2},\\
			A_{j,4}(u)& \ceq \Re\l( e^{-i(\theta_{1} - \theta_{j})}\chi_{1} \bar{\chi_{j}}(u) \r)
			\l(\Re\sum_{\ell = 2}^{r} e^{i(\theta_{1} - \theta_{\ell})} \bar{\chi_{1}}\chi_{\ell}(u) \r)^{2}.
		\end{align}
		Since $\chi_{1}, \dots, \chi_{r}$ are pairwise inequivalent, there exists one $\chi_{i}$ such that $\bar{\chi_{j}} \chi_{i}$ is principal,
		and there exists at most one $\chi_{i}$ such that $\chi_{1}^{2} \bar{\chi_{j} \chi_{i}}$ is principal.
		By this observation and the orthogonality of characters, we have
		\begin{align}
			\sum_{u \in (\ZZ / d\ZZ)^{\times}}A_{j,1}(u)
			&= \sum_{u \in (\ZZ / d\ZZ)^{\times}}\Re \bar{e^{-i\theta_{j}}\chi_{j}}(u)\sum_{k = 1}^{r}\a_{k}e^{-i\theta_{k}}\chi_{k}(u)
			\\&= \Re\sum_{k = 1}^{r}e^{i(\theta_{j} - \theta_{k})}\a_{k}\sum_{u \in (\ZZ / d\ZZ)^{\times}}\bar{\chi_{j}}\chi_{k}(u)
			=\alpha_j\phi(d)= \l\{
			\begin{array}{cl}
				\a \phi(d) & \text{if \; $j = 1$,}     \\
				\phi(d)    & \text{if \; $j \not= 1$,}
			\end{array}
			\r.
		\end{align}
		and
		\begin{align}
			&\sum_{u \in (\ZZ / d\ZZ)^{\times}}A_{j,2}(u)\\
			&= \sum_{\ell = 2}^{r}\sum_{k = 1}^{r}\frac{\a_{k}}{2}\sum_{u \in (\ZZ / d\ZZ)^{\times}}
			\Re \l(e^{i(\theta_{1} - \theta_{\ell} + \theta_{j} - \theta_{k})}\bar{\chi_{1}\chi_{j}} \chi_{\ell}\chi_{k}(u)
			+ e^{i(\theta_{1} - \theta_{\ell} - \theta_{j} + \theta_{k})}\bar{\chi_{1} \chi_{k}}\chi_{j}\chi_{\ell}(u) \r)\\
			&= \l\{
			\begin{array}{ll}
				\frac{1}{2}(B(\bm{\chi}, \bm{\theta}) + r - 1)                                            & \text{if\; $j = 1$,} \\
				\frac{\a \phi(d)}{2}(1 + \cos(2\theta_{1} - \theta_{j} - \theta_{\ell})) + O_{r}(\phi(d)) & \text{if
				$\chi_{j} \sim \chi_{1}^{2} \bar{\chi_{\ell}}$ for some $2 \leq \ell \leq r$,}                                   \\
				\frac{\a \phi(d)}{2} + O_{r}(\phi(d))                                                     & \text{otherwise.}
			\end{array}
			\r.
		\end{align}
		Moreover, we also find  when $j = 1$ that
		\begin{align}
			\sum_{u \in (\ZZ / d\ZZ)^{\times}}A_{j,3}(u)
			&= \Re \sum_{j = 2}^{r}\sum_{\ell = 2}^{r}e^{-i(2\theta_{1} - \theta_{j} - \theta_{\ell})}
			\sum_{u \in (\ZZ / d\ZZ)^{\times}}\chi_{1}^{2}\bar{\chi_{j}\chi_{\ell}}(u)
			= \phi(d)B(\bm{\chi}, \bm{\theta}),\\
			\sum_{u \in (\ZZ / d\ZZ)^{\times}}A_{j,4}(u)
			&= \frac{1}{2}\Re \sum_{j = 2}^{r}\sum_{\ell = 2}^{r}\sum_{u \in (\ZZ / d\ZZ)^{\times}}
			\l(e^{-i(2\theta_{1} - \theta_{j} - \theta_{\ell})}\chi_{1}^{2} \bar{\chi_{j} \chi_{\ell}}(u)
			+ e^{i(\theta_{\ell} - \theta_{j})}\chi_{j}\bar{\chi_{\ell}}(u)\r)\\
			&= \frac{1}{2}(B(\bm{\chi}, \bm{\theta}) + r - 1) \phi(d).
		\end{align}
		From these formulas and the trivial bounds $\sum_{u \in (\ZZ / d\ZZ)^{\times}}A_{j, i}(u) \ll_{r} \phi(d)$, $i = 3, 4$
		for $j \not= 1$, we obtain Proposition \ref{Pos_Xi}.
	\end{proof}

	\begin{proof}[Proof of Theorem \ref{GDP_DC}]
		Put $\a_{1} = \a$ and $\a_{2} = \cdots = \a_{r} = 1$ with $\a$ sufficiently large.
		Then Proposition \ref{Pos_Xi} shows that $\Xi_j(\sigma, \bm \chi, \bm \theta;\bm \alpha)>0$ for all $j$.
		By \eqref{Re_xX} and Proposition \ref{Pos_Xi}, we have
		\begin{align}
			\xi(\sigma, \bm \chi, \bm \theta; \bm \alpha)
			&= \sum_{j = 1}^{r} \a_j \Xi_j(\sigma, \bm \chi,  \bm \theta; \bm \alpha)
			= \a^{\frac{1}{\s}} + \frac{1}{4\s^{2}}\l( r - 1 - (2\s - 1)B(\bm{\chi}, \bm{\theta}) \r)\a^{\frac{1}{\s} - 2}
			+ O_{r, \sigma}(\a^{\frac{1}{\s} - 3}).
		\end{align}
		On the other hand, we find by Proposition \ref{Pos_Xi} that
		\begin{align}
			&\sum_{j = 1}^{r}\Xi_{j}(\s, \bm{\chi}; \bm{\a})^{\frac{1}{1 - \s}}\\
			&= \a^{\frac{1}{\s}} + \frac{1 - 2\s}{4\s^{2}(1 - \s)}\l( r - 1 - (2\s - 1)B(\bm{\chi}, \bm{\theta}) \r)\a^{\frac{1}{\s} - 2}
			+ O_{r,\sigma}\l(\a^{\frac{1}{1 - \s}( \frac{1}{\s}-2)} + \a^{\frac{1}{\s} - 3}\r).
		\end{align}
		Note that we have the trivial bound $B(\bm \chi, \bm \theta)\leq r - 1$
		since there is at most one $\ell \in \{ 2, \dots, r \}$ such that $\chi_{j} \sim \chi_{1}^2 \overline{\chi_\ell}$ for each $j$.
		Hence, it holds that $r - 1 - (2\s - 1)B(\bm{\chi}, \bm{\theta}) > 0$ for $\frac{1}{2} < \s < 1$.
		Thus, we obtain
		\begin{align}
			&\xi(\s, \bm{\chi}; \bm{\a}) - \sum_{j = 1}^{r}\Xi_{j}(\s, \bm{\chi}; \bm{\a})^{\frac{1}{1 - \s}}\\
			&= \frac{1}{4\s(1 - \s)}\Big(r - 1 - (2\s - 1)B(\bm{\chi}, \bm{\theta})\Big)\a^{\frac{1}{\s}-2}
			+ O_{\s, r}(\a^{\frac{1}{1 - \s}( \frac{1}{\s}-2)} + \a^{\frac{1}{\s} - 3}) > 0
		\end{align}
		for any $\a \geq \a_{0}(r, \s)$ with $\a_{0}(r, \s)$ sufficiently large depending only on $r$ and $\s$.
		This completes the proof.
	\end{proof}

	\begin{proof}[Proof of Theorem \ref{SimulExtreme}]
		From Theorem \ref{Main_Thm_EV_R} and Proposition \ref{Pos_Xi} we see that there exists $c=c(\sigma, r)$ such that for $V= c\frac{(\log T)^{1-\sigma}}{\log\log T}$ with $T$ sufficiently large depending on $\sigma, \bm \chi, \bm \theta$ we have that
		$$\Psi(T, \bm V, \bm \chi, \bm \theta)>0$$
		for $\bm V=(\Xi_{1}(\sigma, \bm \chi, \bm \theta;\bm \alpha)V, \dots, \Xi_{r}(\sigma, \bm \chi, \bm \theta; \bm \alpha) V)$
		where $\bm{\a} = (\a_{1}(\s, r), 1, \dots, 1)$ and $\a_{1}$ is the same constant as in Proposition \ref{Pos_Xi}.
		This gives the lower bound in Theorem \ref{SimulExtreme}.
	\end{proof}

	\begin{acknowledgment*}
		The authors would like to thank Winston Heap for helpful comments on improving the exposition of the paper and Masahiro Mine for providing valuable remarks on a dependence property of $L$-functions.
		We would also like to thank {\L}ukasz Pa\'nkowski for pointing out a mistake in an earlier version of the paper.
		Finally, the authors express their gratitude to the referee(s) for the careful reading and for providing valuable comments and suggestions.
		The first author was supported by Grant-in-Aid for JSPS Research Fellow (Grant Number: 21J00425, 24K16907).
		The second author was supported by Germany's Excellence Strategy grant EXC-2047/1 - 390685813, DFG grant BL 915/5-1 as well as the Max Planck Institute for Mathematics. 
	\end{acknowledgment*}

\end{document}